\setlist{nosep}
\author{Samaria Montenegro \and Silvain Rideau}
\thanks{Partially supported by ValCoMo (ANR-13-BS01-0006)}
\title{Imaginaries, invariant types and pseudo $p$-adically closed fields}
\date{\today}
\keywords{Model Theory, Pseudo $p$-adically closed fields, Elimination of Imaginaries, Invariant types}
\subjclass[2010]{Primary: 03C60; Secondary: 03C45, 03C98, 12J12}
\newcommand{\PpC}[1][]{\ifstrempty{#1}{}{#1{-}}\mathrm{P}p\mathrm{C}}
\newcommand{\pCFG}{p\mathrm{CF}^{\Geom}}
\newcommand{\pCF}{p\mathrm{CF}}
\newcommand{\trdeg}[1]{\mathrm{trdeg}(#1)}
\newcommand{\Balls}{\mathbf{B}}
\newcommand{\LP}{\cL_{\mathrm{Mac}}}
\newcommand{\Geomim}{\Geom^{\mathrm{im}}}
\newcommand{\qfindep}{\indep^{\mathrm{i},\mathrm{qf}}}
\newcommand{\Pow}[2][]{\mathrm{P}^{#1}_{#2}}
\newcommand{\pM}[2][]{\ifstrempty{#1}{M}{#1}_{#2}}
\newcommand{\aM}[2][]{\alg{\ifstrempty{#1}{M}{#1}_{#2}}}
\newcommand{\pcl}[1]{\overline{#1}^{\mathrm{p}}}
\newcommand{\equivL}[1][]{\equiv_{\cL\ifstrempty{#1}{}{(#1)}}}
\newcommand{\equivp}[2][]{\equiv_{\cL_{\ifstrempty{#1}{i}{#1}}(#2)}}
\newcommand{\equiva}[2][]{\equiv_{\bcL_{\ifstrempty{#1}{i}{#1}}(#2)}}
\renewcommand{\Latt}[2][]{\LattN^{#1}_{#2}}
\renewcommand{\val}{\mathrm{v}}
\newcommand{\bcL}{\alg{\mathcal{L}}}
\newcommand{\acla}[1][]{\acl_{\bcL_{\ifstrempty{#1}{i}{#1}}}}
\newcommand{\aclp}[1][]{\acl_{\cL_{\ifstrempty{#1}{i}{#1}}}}
\newcommand{\dcla}[1][]{\dcl_{\bcL_{\ifstrempty{#1}{i}{#1}}}}
\newcommand{\dclp}[1][]{\dcl_{\cL_{\ifstrempty{#1}{i}{#1}}}}
\newcommand{\tpa}[1][]{\tp_{\bcL_{\ifstrempty{#1}{i}{#1}}}}
\newcommand{\tpp}[1][]{\tp_{\cL_{\ifstrempty{#1}{i}{#1}}}}
\newcommand{\qfLG}{\bcL}
\newcommand{\qfequiv}[1][]{\equiv^{\mathrm{qf}}_{#1}}
\newcommand{\dpr}{\mathrm{dp}}
\definecolor{orange}{rgb}{1,0.5,0}
\definecolor{secret}{rgb}{0,0.5,1}
\begin{document}

\begin{abstract}
In this paper, we give a general criterion for elimination of imaginaries using an abstract independence relation. We also study germs of definable functions at certain well-behaved invariant types. Finally we apply these results to prove the elimination of imaginaries in bounded pseudo \(p\)-adically closed fields.
\end{abstract}

\maketitle

\section*{Introduction}

Elimination of imaginaries is a positive answer to the question of finding definable moduli spaces for every definable family of definable sets; that is, given definable sets \(X\subseteq Y\times Z\), finding a definable map \(f : Y \to Z\) such that for all \(y_1,y_2\in Y\), \(f(y_1) = f(y_2)\) if and only if the fiber of \(X\) above \(y_1\) is equal to the one above \(y_2\). This is equivalent to the existence, for every set defined with parameters, of a smallest set of definition. In recent work of Hrushovski \cite{Hru-EIACVF} on the elimination of imaginaries in algebraically closed valued fields, the focus is shifted to the local question of finding smallest sets of definition for definable types --- and proving that there are enough definable types to deduce elimination of imaginaries.

But there are many structures of interest, among which the field of \(p\)-adic numbers, where there are too few definable types for this local approach to work. It is therefore tempting to work with invariant types instead. This presents a number of issues, the principal one being that, generally, invariant types cannot have smallest sets of definition due to their fundamentally infinitary nature. Some of them, for example, encode the cofinality of an infinite ordered set. In this paper, we choose to focus on a class of tractable invariant types: those that are arbitrarily close to definable types; the set \(\cDef(M/A)\) of \ref{approx inv}. The first part of this paper consists in providing the tools for an approach to elimination of imaginaries based on these definably approximable types.

The most important technical issue is the understanding of germs of definable functions at definably approximable types. At general invariant types, these germs are complicated hyperimaginaries. However, we show that, at a definably approximable type, germs of definable functions are encoded by the cofinality of a filtered ordered set of imaginary points. This can then used to encode germs of functions in a theory using elimination of imaginaries in a reduct, cf. \ref{inv acl}. We also give necessary and sufficient condition for every invariant type to be definably approximable --- equivalently, in an \(\NIP\) theory, for every non-forking formula to contain a definable type. In \(\dpr\)-minimal theories, Simon and Starchenko \cite{SimSta-DensDef} give a sufficient conditions for this density result to hold over models. We show that for the density result to hold over arbitrary algebraically closed basis, it suffices for the result to hold over models and for every definable set to contain a type definable over the algebraic closure of its code. In particular, the density result holds in algebraically closed valued fields (including imaginaries) --- cf. \ref{dens def Cmin}.

The other abstract contribution of this paper is to provide a very general criterion for weak elimination of imaginaries. Many proofs of elimination of imaginaries in tame unstable contexts follow the outline of Hrushovski's approach in bounded pseudo algebraically closed fields \cite[Proposition\,3.1]{Hru-PAC}. Thus, it seems interesting to isolate an abstract criterion pinpointing the exact ingredients of this proof. We show that if one can find an independence relation satisfying a strong form of extension and a certain amalgamation result, then weak elimination of imaginaries follows. For certain choices of independence relation (for example definable independence or invariant independence), this criterion reduces to previously known criteria of Hrushovski \cite[Lemma\,1.17]{Hru-EIACVF} or the second author \cite[Proposition\,9.1]{Rid-VDF}. For other choices of independence relation, one recovers a large number of previously known elimination of imaginaries proofs, for examples the one in algebraically closed fields with a generic automorphism \cite[\S 1.10]{ChaHru-ACFA}.\medskip

The second part of this paper is devoted to applying these general methods to the class of bounded pseudo \(p\)-adically closed fields. Note that the approach presented in this paper does not rely on the elimination of imaginaries in \(p\)-adic fields but rather reproves it, providing us with a new, slightly different, proof of \cite[Theorem\,2.6]{HruMarRid}. The new proof focuses, from the start, on constructing invariant types rather than reducing the problem to the elimination of unary imaginaries.

Before defining pseudo \(p\)-adically closed fields, let us recall that a field is said to be pseudo algebraically closed if every absolutely irreducible variety over this field has a rational point. This class of fields first appeared in Ax's work on pseudo finite fields \cite{Ax-Psf}. Their model theory has since been extensively studied, in parallel with the development of simplicity. Indeed bounded pseudo algebraically closed fields --- those that have only finitely many extensions of any given degree --- provide the main example of simple unstable fields. As interest in less restrictive tameness notions grew, for example notions like \(\NTP_{2}\) that do not preclude the existence of any definable order, it also became important to find algebraic examples, in particular enriched fields, that would provide us with study cases.

In \cite{Mon-EIPRC} and \cite{Mon-NTP2}, the first author thus started a neostability flavored study of two classes of large fields --- see \cite{Pop-Large} --- extending the class of pseudo algebraically closed fields: pseudo $p$-adically closed fields and pseudo real closed fields. Those two classes consist of the fields over which any absolutely irreducible variety with a simple point over every $p$-adically closed (respectively real closed) extension has a rational point. These classes were defined by van den Dries, Basarab, Prestel, Grob, Jarden and Haran, Ershov over 30 years ago in \cite{vdD-PhD, Bas-PRC, Pre, Gro, HarJar-PpC, Ers-RegCl}. A number of their model theoretic properties, pertaining to model completeness and the description of types, had been worked out. In \cite{Mon-NTP2}, the first author provided new tools to study these fields, mostly in the bounded case. She gives a description of definable sets --- the so-called density theorems \cite[Theorem\,3.17 and 6.11]{Mon-NTP2}  --- which generalizes cellular decomposition to the multi-valued (respectively multi-ordered) setting. She also proves an amalgamation theorem \cite[Theorem\,3.21 and 6.13]{Mon-NTP2} which is a weaker version of the independence theorem of simple theories which holds in this non-simple setting. She then uses those new tools to prove new classification results, the main one \cite[Theorem\,4.24 and 8.5]{Mon-NTP2} being that bounded pseudo $p$-adically closed and pseudo real closed fields are \(\NTP_2\) of finite burden.

One particularity that stands out in the first author's work is that, although most results are proved for both classes, elimination of imaginaries was only proved for pseudo real closed fields \cite{Mon-EIPRC}. The initial motivation for this paper was to repair that asymmetry. As it turns out, we were also able to repair a small gap in the proof of the pseudo real closed case, cf. \S\ref{S:PRC}.

Since bounded pseudo $p$-adically closed fields that are not pseudo algebraically closed fields come with finitely many definable valuations, one cannot expect elimination of imaginaries in a language with just one sort for the field, contrary to what happens with pseudo real closed fields. But since the work of \cite{HasHruMac-ACVF}, we know how to circumvent that particular issue: we have to add, for each valuation, codes for certain definable modules over the valuation ring; that is, work in the so-called geometric language. The main question regarding the imaginaries in bounded pseudo \(p\)-adically closed fields then becomes to prove that there are no imaginaries arising from the interaction between the various valuations and, therefore, that it suffices to add the geometric sorts for each of the valuations. The main result of this paper, \ref{EIPpC}, is a positive answer to this question. We deduce it from our abstract criterion, \ref{crit wEI}, applied to quantifier free invariant independence, cf \ref{qf ind}. The results of Section\,\ref{s:approx} play a fundamental role by allowing us to deduce \(n\)-ary extension from unary extension for that particular independence relation.

Our first step towards this elimination result, and a core ingredient of the rest of the paper, is \ref{orth geom} which states that not only are the geometric sorts for each valuation orthogonal but also that the structure of any given geometric sort is the one induced by the relevant \(p\)-adic closure.

We then proceed to deduce, from this strong statement on the independence of the valuations, a result on the structure of definable subsets of the valued field, where, at first sight, the valuations do interact. The key ingredient of the first author's proof that pseudo real closed fields eliminate imaginaries is \cite[Theorem 6.11]{Mon-NTP2} which states that, in dimension one, the decomposition given by the density theorem can be chosen in a canonical way. In pseudo \(p\)-adically closed fields, it is unclear if such a canonical decomposition exists. We choose instead to work at the level of types by proving, in \ref{loc dens higher}, that every type over algebraically closed sets of imaginary parameters is consistent with a global quantifier free type which is invariant over the geometric part of the parameters.

The last ingredient needed to prove weak elimination of imaginaries is to extend the first author's amalgamation result \cite[Theorem 6.13]{Mon-NTP2} to allow geometric parameters, cf. \ref{indep}. In particular, this requires us proving \ref{PpCExisClosed} which states an existential closedness property of pseudo \(p\)-adically closed fields in a stronger language than the field language, containing Macintyre's language for each \(p\)-adic valuation.

As is often the case, coding finite sets is mostly an independent issue. Here, we deduce it from the coding of finite sets in algebraically closed fields equipped with finitely many independent valuations. Our approach, inspired by Johnson's account \cite[\S 6.2]{Joh-EIACVF} of the coding of finite sets in algebraically closed valued fields, consists in first lifting tuples of geometric points by generically stable types of the valued field and then using the code of these types to encode finite sets.\medskip

The organization of the paper is as follows. Section\,\ref{S:im germs} contains the more abstract results and Section\,\ref{S:PpC} is devoted to pseudo \(p\)-adically closed fields. We start, in Section\,\ref{s:im prelim}, by recalling definitions related to imaginaries and their elimination. In Section\,\ref{s:approx}, we consider invariant types that can be approximated by definable types and the germs of definable functions over such types. We conclude the abstract part of the paper, in Section\,\ref{s:crit}, by proving a criterion for weak elimination of imaginaries and discussing various specific cases of that criterion.

Preliminaries on the model theory of valued fields and pseudo \(p\)-adically closed fields can be found in Section\,\ref{s:PpC prelim}. Section\,\ref{s:not} contains  various details regarding the language that we will be using. The orthogonality and purity of the geometric sorts is proved in Section\,\ref{s:orth geom}. Section\,\ref{s:acl} is devoted to the description of the algebraic closure in bounded pseudo \(p\)-adically closed fields, including geometric imaginaries, and in Section\,\ref{s:dens}, we prove the existence of quantifier free invariant extensions of types over algebraically closed bases. Amalgamation over geometric points is proved in Section\,\ref{s:amalg}. Finally, in Section\,\ref{s:finite}, we code finite sets and we deduce elimination of imaginaries in Section\,\ref{s:main}.\medskip

The authors would like to thank Gabriel Conant for giving them the idea of working with an abstract independence relation in the criterion for weak elimination of imaginaries.

\section{Imaginaries and germs of functions}\label{S:im germs} 

\subsection{Preliminaries}\label{s:im prelim}
 
We will assume knowledge of standard model theoretic knowledge and notation. We refer the reader to \cite{TenZie} for an introduction to model theory.
 
Let us start by recalling the basic definitions regarding elimination of
imaginaries. The reader can look at \cite{Poi-ImGal} for more details. Let $X$ be a definable set in some $\cL$-structure
$M$. The tuple $b\in M$ is a canonical parameter for $X$ (via the
$\cL$-formula $\phi(x, y)$) if for all tuples $b'\in M$, $X = \phi(M,b')$ if
and only if $b' = b$. An $\cL$-theory $T$ is said to eliminate
imaginaries if every definable set in any model of $T$ has a canonical
parameter (via some $\cL$-formula). Equivalently, if $T$ has
sufficiently many constants, $T$ eliminates imaginaries if and only
if, for every $\cL$-formula $\phi(x, y)$, there exists an
$\emptyset$-definable map $f$ such that
\[T\vdash \forall y_1\forall y_2\,(f(y_1) = f(y_2)\iffform (\forall
  x\,(\phi(x, y_1)\iffform\phi(x, y_2)))).\] Note that if \(\phi\) defines an equivalence relation $E$ in \(T\), then we have
that $T\vdash \forall x\forall y\,(x E y\iffform f(x) = f(y))$.

Given any language \(\cL\), we define the language \(\eq{\cL}\) that contains, for each $\cL$-formula $\phi(x, y)$, a new sort $E_\phi$ and a new function
symbol $f_\phi$ from the product of sorts of \(y\) to \(E_{\phi}\). Given an \(\cL\)-theory \(T\), we define the \(\eq{\cL}\)-theory \[\eq{T} := T\cup \bigcup_\phi\{f_\phi\text{ is onto and }\forall y_1\forall y_2\,(f(y_1) = f(y_2)\iffform (\forall
x\,(\phi(x, y_1)\iffform\phi(x, y_2))))\}.\]
The theory $\eq{T}$
eliminates imaginaries and to any $M\models T$, we can
associate a unique $\eq{M}\models\eq{T}$ whose reduct to $\cL$ is $M$. We denote by \(\acleq\) (respectively \(\dcleq\)) the algebraic (respectively definable) closure in \(\eq{M}\). If \(X\) is an \(\cL(M)\)-definable set, we define \(\code{X} = \dcleq(b)\subseteq \eq{M}\) for any choice of canonical parameter \(b\) of \(X\).

\subsection{Invariant types approximated by definable types}\label{s:approx}

In this section, we want to show how being able to approximate invariant types by definable
types can be helpful to compute "canonical bases" of invariant types. This will allow us to study the invariance of germs of functions over invariant types. In \ref{crit dens def}, we also give necessary and sufficient conditions, that hold in algebraically closed valued fields, for invariant types to be approximated by definable types.

We denote by $\TP(M)$ the type space over $M$ and by $\TP_x(M)$ the type space over $M$ in variable $x$.  

\begin{definition}[approx inv]
  Let \(T\) be some \(\LL\)-theory and \(A \subseteq M
  \models T\) and $p(x) \in \TP_x(M)$.
  \begin{enumerate}
  \item We say that $p$ is \(\aut(M/A)\)-invariant if for all $\sigma
    \in \aut(M/A)$, any $\cL$-formula $\phi(x, y)$ and tuple $a\in A$,
    $\phi(x, a)\in p$ if and only if $\phi(x,\sigma(a))\in p$.
  \item The type $p$ is $\cL(A)$-definable if for every
    $\cL$-formula $\phi(x, y)$, there is some $\cL(A)$-formula
    $\defsc{p}{x}\,\phi(x, y) = \theta(y)$ such that for all $a \in A$,
    $p \vdash \phi(x, a)$ if and only if $M \models
    \defsc{p}{x}\,\phi(x,a)$.
  \end{enumerate} 
\end{definition}

\begin{definition}
  Let \(T\) be some \(\LL\)-theory and let \(A \subseteq M \models
  T\). We define:
\begin{enumerate}
\item \(\Inv_{x}(M/A)\subseteq \TP_x(M)\) the set of
  \(\aut(M/A)\)-invariant types over $M$;
\item \(\Def_x(M/A)\subseteq \TP_x(M)\) the set of
  \(\LL(A)\)-definable types over $M$;
\item \(\cDef_x(M/A)\) the set of types \(p\in\TP_x(M)\) such
  that for all \(\LL(M)\)-formula \(\phi(x)\in p\), there exists
  \(q\in\Def_x(M/A)\) such that \(\phi\in q\).
\end{enumerate}
\end{definition}

\begin{remark}
  We have
  \(\Def_x(M/A)\subseteq\cDef_x(M/A)\subseteq\Inv_x(M/A)\)
  and the last two sets are closed in \(\TP_x(M)\). Moreover,
  as the notation indicates, \(\cDef_x(M/A)\) is the closure of
  \(\Def_x(M/A)\).
\end{remark}

\begin{fact}[approx type]
  Let $T$ be some $\cL$-theory, let \(A \subseteq M \models
  T\) and let \(p\in\cDef_x(M/A)\). Then there exists a sequence
  \((q_i)_{i\in I} \in\Def_x(M/A)\), indexed by a directed set of size
  at most \(2^{|\LL(A)|}\), such that for all \(\LL(M)\)-formula
  \(\phi(x)\), \(\phi\in p\) if and only if \(\phi\in q_i\) for almost
  all \(i\).
\end{fact}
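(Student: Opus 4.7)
The natural approach is to use the directed set $(p, \succeq)$, where $\phi \succeq \psi$ iff $\phi \vdash \psi$, and for each $\phi \in p$ to pick by hypothesis some $q_\phi \in \Def_x(M/A)$ containing $\phi$. Convergence is then immediate: if $\psi \in p$ then $\psi \in q_\phi$ for every $\phi \succeq \psi$, and if $\psi \notin p$ then $\neg\psi \in p$ and $\psi \notin q_\phi$ for every $\phi \succeq \neg\psi$. The obstacle is that $|p|$ may be as large as $|\cL(M)|$, whereas the statement requires a directed set of size at most $2^{|\cL(A)|}$.

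The main point is therefore to reduce the size of the index set; the key is to exploit the invariance of $p$ (implicit in $p \in \cDef_x(M/A) \subseteq \Inv_x(M/A)$) together with the automatic $\aut(M/A)$-invariance of every $\cL(A)$-definable type. Assuming, as is standard, that $M$ is sufficiently saturated and strongly homogeneous over $A$, any two tuples of $M$ realizing the same type over $A$ are conjugate by $\aut(M/A)$, so for each $\cL$-formula $\phi(x, y)$ the truth values of ``$\phi(x, a) \in p$'' and ``$\phi(x, a) \in q$'' (for $q \in \Def_x(M/A)$) depend only on $r := \tp(a/A)$.

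Let $\tilde p$ be the set of pairs $(\phi, r)$, with $\phi$ an $\cL$-formula and $r$ a complete type over $A$ in the appropriate variable realized in $M$, such that $\phi(x, a) \in p$ for some (equivalently, every) $a \models r$. Then $|\tilde p| \leq |\cL| \cdot 2^{|\cL(A)|} = 2^{|\cL(A)|}$, and the set $I$ of finite subsets of $\tilde p$ ordered by inclusion is directed and of size at most $2^{|\cL(A)|}$. Fixing once and for all, for each $(\phi, r) \in \tilde p$, some $a_{\phi, r} \in M$ realizing $r$, the formula $\chi_F := \bigwedge_{(\phi, r) \in F} \phi(x, a_{\phi, r})$ lies in $p$ for every $F \in I$, and the defining property of $\cDef_x(M/A)$ provides $q_F \in \Def_x(M/A)$ with $\chi_F \in q_F$.

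Convergence now transfers from the naive argument across the equivalence: given $\psi(x) = \phi(x, a) \in \cL(M)$ and $r := \tp(a/A)$, if $\psi \in p$ then $(\phi, r) \in \tilde p$ and for every $F \ni (\phi, r)$ we have $\phi(x, a_{\phi, r}) \in q_F$, whence $\psi \in q_F$ by the $\aut(M/A)$-invariance of $q_F$; symmetrically, if $\psi \notin p$ then $\psi \notin q_F$ for every $F \ni (\neg\phi, r)$. The cardinality reduction from $p$ to $\tilde p$, which essentially says that an invariant type carries only $2^{|\cL(A)|}$ bits of information, is where the argument has all its content.
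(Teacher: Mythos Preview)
Your argument is correct. The paper itself offers no proof beyond the remark that this is ``just the characterization of closure by nets'', so you have supplied what the paper omits. You correctly identify that the only substantive point is the cardinality bound $2^{|\cL(A)|}$: the naive net indexed by $p$ (or by a neighborhood basis of $p$ in $\TP_x(M)$) would only give $|\cL(M)|$, and your reduction via invariance---replacing formulas $\phi(x,a)$ by pairs $(\phi,\tp(a/A))$---is exactly what brings the weight down. Your explicit homogeneity assumption is harmless: the paper imposes it globally a few lines after this Fact, and it is needed for $\aut(M/A)$-invariance of $p$ to have the consequence you use. Note also that for the $q_F$ side of the convergence argument, definability over $A$ already guarantees that membership of $\phi(x,a)$ in $q_F$ depends only on $\tp(a/A)$, independently of any homogeneity, so your appeal to invariance there is even cleaner than you suggest.
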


By almost all \(i\), we mean that it holds for all \(i\) greater that
some \(i_0\in I\). This is just the characterization of closure by
nets.

\begin{definition}
  Let \(M\) be an \(\LL\)-structure \(p\in\TP_x(M)\) and \(f\), \(g\) be
  \(\LL(M)\)-definable functions, defined at \(p\). We say that \(f\)
  and \(g\) have the same \(p\)-germ if \(p(x) \vdash f(x) =
  g(x)\). We denote by \(\germ{p}{f}\) the class of \(f\) for this
  equivalence relation.
\end{definition}

A priori, \(\germ{p}{f}\) is an hyperimaginary: an equivalence class for an invariant equivalence relation. 
If \(p\) is definable, it can be identified with an imaginary point. The core of the
following proposition is that, if \(p\in\cDef(M/A)\), then the
hyperimaginary \(\germ{p}{f}\) is coded by the "limit" of a sequence
of imaginaries.\medskip

In what follows, let \(T_0\) be some \(\LL_0\)-theory eliminating
quantifiers and imaginaries whose sorts we denote \(\Real_0\). Let $T$
be a complete \(\cL\)-theory containing the universal part of $T_0$. When applying these results to pseudo $p$-adically closed fields, we take \(T\) to be the complete theory of some bounded pseudo $p$-adically closed field and \(T_0\) to be the theory of its algebraic closure with (the extension of) one of its valuations.

Let \(M\models T\) be sufficiently saturated and homogeneous and
\(M_0\models T_0\) containing \(\Real_0(M)\) be such that any
automorphism of \(M\) extends to \(M_0\). When we say that something is \(\cL_0\)-definable, it will mean that it is definable in \(M_0\). Assume that:
\begin{itemize}
\item[($\dagger$)] for all \(\epsilon\in\dcl^{M_0}_{\LL_0}(\Real_0(M))\),
  there exists a tuple \(\eta\in\Real_0(M)\) such that \(\epsilon\)
  and \(\eta\) are interdefinable in the pair \((M_0,M)\).
\end{itemize}

\begin{proposition}[code germ cdef]
 Let \(A\subseteq \eq{M}\) be such that \(\Real_0(\acl[\LL](A))\subseteq A\), \(p\in\cDef_x^{\LL_0}(M_0/\Real_0(A))\) and \(f\) be an
  \(\LL_0(\Real_0(M))\)-definable function defined at \(p\). If
  \(\germ{p}{f}\) has a finite orbit under \(\aut[\LL](M/ A)\), then
  it is \(\aut[\LL_0](M_0/\Real_0(A))\)-invariant.
\end{proposition}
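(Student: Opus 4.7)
The plan is to use \ref{approx type} to replace the hyperimaginary $\germ{p}{f}$ by a net of genuine imaginaries $c_i$, which by hypothesis $(\dagger)$ can be represented by tuples $\eta_i \in \Real_0(M)$, where the hypothesis $\Real_0(\acl[\LL](A))\subseteq A$ becomes usable. First, I would pick a directed net $(q_i)_{i\in I}\subseteq \Def_x^{\LL_0}(M_0/\Real_0(A))$ whose limit is $p$. Since each $q_i$ is $\LL_0(\Real_0(A))$-definable and $f$ is $\LL_0(\Real_0(M))$-definable, the equivalence relation $g\sim_{q_i} g' \iff q_i\vdash g(x)=g'(x)$ is $\LL_0$-definable over $\Real_0(M)$, so $c_i := \germ{q_i}{f}$ is a genuine $\LL_0$-imaginary, in $\dcl^{M_0}_{\LL_0}(\Real_0(M))$ after identifying it with its real code via EI in $T_0$. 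Hypothesis $(\dagger)$ then provides some $\eta_i\in\Real_0(M)$ interdefinable with $c_i$ in $(M_0,M)$; crucially, the direction $\eta_i\leadsto c_i$ is a pure $\LL_0$-definition.

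The next step is to observe the following key equivalence: for any $\LL_0$-automorphism $\theta$ of $M_0$ fixing $\Real_0(A)$, $\theta$ fixes each $q_i$, and by completeness of $p$ together with \ref{approx type},
\[\theta(\germ{p}{f}) = \germ{p}{f} \iff p \vdash \theta(f)(x)=f(x) \iff \theta(c_i)=c_i \text{ for almost all } i.\]
This applies both to extensions to $M_0$ of elements $\tau \in \aut[\LL](M/A)$ (which exist by assumption and preserve $M$) and to any $\sigma \in \aut[\LL_0](M_0/\Real_0(A))$ directly.

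The heart of the argument is to deduce, from the finite $\aut[\LL](M/A)$-orbit of $\germ{p}{f}$, that $c_i \in \acl[\LL](A)$ for almost all $i$. Picking representatives $\tau_1=\id, \tau_2, \ldots, \tau_n$ of the $n$ orbit-classes and extending them to $\LL_0$-automorphisms of $M_0$ preserving $M$, the key equivalence applied to $\tau_j^{-1}\tau_k$ gives $\tau_j(c_i)\neq \tau_k(c_i)$ for $j\neq k$ and almost all $i$; while for any $\tau$ in the coset of $\tau_j$, $\tau_j^{-1}\tau$ stabilises $\germ{p}{f}$, so $\tau(c_i)=\tau_j(c_i)$ for almost all $i$. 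The hard part will be to control these "almost all" conditions uniformly over the generally infinite group $\aut[\LL](M/A)$: by saturation of $M$, a cofinal family of $c_i$'s with $\aut[\LL](M/A)$-orbit of size $>n$ would, passing to a limit via the key equivalence, produce an $(n{+}1)$-th conjugate of $\germ{p}{f}$, a contradiction. Thus the $\aut[\LL](M/A)$-orbit of $c_i$ has size at most $n$ for almost all $i$, giving $c_i\in\acl[\LL](A)$; invoking interdefinability with $\eta_i$ and the assumption $\Real_0(\acl[\LL](A))\subseteq A$ then yields $\eta_i\in\Real_0(A)$ for almost all $i$.

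To finish, any $\sigma\in\aut[\LL_0](M_0/\Real_0(A))$ fixes $\eta_i$ pointwise for almost all $i$, and since $c_i$ is purely $\LL_0$-definable from $\eta_i$, $\sigma$ also fixes $c_i$ for almost all $i$. The key equivalence, applied to $\theta=\sigma$, then yields $\sigma(\germ{p}{f}) = \germ{p}{f}$, proving the desired $\aut[\LL_0](M_0/\Real_0(A))$-invariance.
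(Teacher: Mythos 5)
Your overall plan matches the paper's: approximate $p$ by the net $(q_i)$, pass from the hyperimaginary $\germ{p}{f}$ to genuine imaginaries $c_i=\germ{q_i}{f}$ and then, via $(\dagger)$, to real tuples $\eta_i$; establish the key equivalence linking the orbit of $\germ{p}{f}$ to the orbits of the $c_i$ along the net; and try to push the finite-orbit hypothesis through to conclude $\eta_i\in\Real_0(A)$ for almost all $i$. The setup and the key equivalence are both correct.

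There is, however, a real gap exactly where you flag ``the hard part.'' From the assumption that, for unboundedly many $j\in J$, the orbit of $\eta_j$ under $\aut[\LL](M/A)$ is large, you must extract a \emph{single} tuple of automorphisms $\tau_0,\dots,\tau_n$ (or an $\omega$-sequence) that separates the $\eta_j$ uniformly across an unbounded set of $j$. The phrase ``by saturation of $M$ $\dots$ passing to a limit via the key equivalence'' is not an argument: the witnesses $\tau^j_0,\dots,\tau^j_n$ depend a priori on $j$, and there is no compactness or limiting device that turns a net of automorphisms into a single one. The missing ingredient is Neumann's lemma: a group is not a finite union of cosets of infinite-index subgroups. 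This produces, for each \emph{finite} subset of $J$, arbitrarily long uniformly separating sequences; saturation and homogeneity of $M$ then realise the resulting partial type to get a genuine $\omega$-sequence $(\tau_l)$ separating all $\eta_j$ with $j\in J$, and the key equivalence gives $\omega$-many distinct conjugates of $\germ{p}{f}$, the desired contradiction. A related defect in your version is that you work with ``orbit size $>n$,'' which is too weak for the covering lemma (a group can be covered by finitely many subgroups of index $2$). You need the stabiliser $\aut[\LL](M/A\eta_j)$ to have \emph{infinite} index, and this is precisely where the hypothesis $\Real_0(\acl[\LL](A))\subseteq A$ must be used: since $\eta_j\in\Real_0(M)$, if $\eta_j\notin A$ then $\eta_j\notin\acl[\LL](A)$, hence its orbit is infinite. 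You invoke $\Real_0(\acl[\LL](A))\subseteq A$ only at the very end to translate $c_i\in\acl(A)$ into $\eta_i\in\Real_0(A)$; in fact it is needed earlier, to make the group-theoretic step go through.
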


\begin{proof}
  Let \((q_i)_{i\in I}\in\Def_x^{\cL_0}(M_0/\Real_0(A))\) be as in
  \ref{approx type} with respect to \(p\) and let \(\epsilon_i
  :=\germ{q_i}{f} \in M_0\) and \(\eta_i\) be as in Hypothesis\,\((\dagger)\)
  with respect to \(\epsilon_i\). Let \(F = (F_{m})_{m}\) be an
  \(\LL_0\)-definable family of functions such that \(f = F_{m_0}\)
  for some \(m_0\). Note that \(p(x)\vdash F_{m_1}(x) = F_{m_2}(x)\)
  if and only if \(q_i\vdash F_{m_1}(x) = F_{m_2}(x)\) for almost all
  \(i\). In particular, for any
  \(\sigma\in\aut[\cL_0](M_0/\Real_0(A))\),
  \(\sigma(\germ{p}{f}) = \germ{p}{f}\) if and only if
  \(\sigma(\epsilon_i) = \epsilon_i\) for almost all \(i\).

\begin{claim}
\(\eta_i\in \Real_0(A)\), for almost all \(i\in I\).
\end{claim}

\begin{proof}
  As \(\eta_i\in\Real_0(M)\), it suffices to prove that \(\eta_i\in
  A\) for almost all \(i\). If not, there is an unbounded subset
  $J\subseteq I$ such that for all \(j\in J\), \(\eta_{j}\) has an
  infinite $\aut[\LL](M/A)$-orbit and hence
  \(\aut[\LL](M/A\eta_j)\) has infinite index. By Neumann's
  lemma, for all choice of \(j_1,\ldots, j_n \in J\), there exists
  \(\tau_1,\ldots\tau_n\in\aut[\LL](M/A)\) such that, for all \(k\),
  the \(\tau_l(\eta_{j_k})\) are all distinct. By saturation and
  homogeneity, there exists \((\tau_l)_{l\in\omega}\in\aut[\LL](M/A)\)
  such that, for all \(j\in J\), the \(\tau_l(\eta_j)\) are all
  distinct and hence, extending \(\tau_l\) to \(M_0\), so are all the $\tau_l(\germ{q_j}{f})$. Since the
  set $J$ is unbounded, it follows that the \(\tau_l(\germ{p}{f})\) are
  all distinct, a contradiction.
\end{proof}

We have proved that \(\aut[\LL_0](M_0/\Real_0(A))\) fixes almost
all \(\epsilon_i\) and hence it fixes \(\germ{p}{f}\).
\end{proof}

Let \(\cH\) be a subset of \(\Real_0\). We now also assume that:
\begin{itemize}
\item[($\star$)] For all tuple \(a\in\cH(N)\), where \(N\supsel M\),
  \(\Real_0(\acl_{\cL}(Ma))\subseteq \acl_{\cL_0}(\Real_0(Ma))\).
\end{itemize}

In our application to pseudo \(p\)-adically closed fields, the sort \(\cH\) will be the field sort.

\begin{proposition}[inv acl]
  Let \(a \in \cH(N)\), where \(N\supsel M\), \(A\subseteq \eq{M}\) be such that \(\Real_0(\acl[\LL](A))\subseteq A\), and \(c\in\Real_0(\eq{\acl_{\cL}}(Aa))\) be tuples. 
  Assume that \(p:= \tp_{\LL_0}(\Real_0(a)/M_0)\in\cDef(M_0/\Real_0(A))\)  Then, there
  exists an \(\cL_0(M)\)-definable map \(F\) such that \(\germ{p}{F}\)
  is \(\aut[\LL_0](M_0/\Real_0(A))\)-invariant and
  \(c\in\acl_{\cL_0}(F(a))\).
\end{proposition}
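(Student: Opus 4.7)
The plan is to first construct $F$ by using Hypothesis $(\star)$ to translate $\eq{\cL}$-algebraicity of $c$ over $Aa$ into $\cL_0$-algebraicity over $\Real_0(M)a$, and then to apply \ref{code germ cdef} to upgrade the germ invariance. The construction is direct; the work lies in verifying the finite-orbit hypothesis of \ref{code germ cdef}.

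\textbf{Construction of $F$.} Since $c$ is a real tuple with $c\in\Real_0(\eq{\acl_{\cL}}(Aa))$ and $A\subseteq\eq{M}$, replacing imaginary parameters of $A$ by real representatives in $M$ yields $c\in\Real_0(\acl_{\cL}(Ma))$. Hypothesis $(\star)$ applied to $a\in\cH(N)$ then gives $c\in\acl_{\cL_0}(\Real_0(M)\cup a)$. Fix an $\cL_0(\Real_0(M))$-formula $\phi(y,x)$ with $\phi(c,a)$ and $\phi(\Real_0(M_0),a)$ finite of size $n$. By elimination of imaginaries in $T_0$, define $F(x)$ to be the canonical (real) parameter of $\phi(\Real_0(M_0),x)$ when this set has size $n$, and a default value otherwise. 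Then $F$ is $\cL_0(\Real_0(M))$-definable, $F(a)$ codes a finite set containing $c$, and so $c\in\acl_{\cL_0}(F(a))$.

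\textbf{Germ invariance.} By \ref{code germ cdef}, it suffices to show $\germ{p}{F}$ has finite orbit under $\aut[\cL](M/A)$. To arrange this, we exploit the stronger $\eq{\cL}$-algebraicity of $c$ over $Aa$: fix an $\eq{\cL}(A)$-formula $\chi(y,x)$ with $\chi(c,a)$ and $\chi(\eq{M},a)\cap\Real_0^{|c|}$ finite. The strategy is to refine the choice of $\phi$ above so that, on a $p$-generic type-definable neighborhood, the $\cL_0$-definable set $\phi(\Real_0(M_0),x)$ coincides with the $\eq{\cL}(A)$-definable set $\chi(\eq{M},x)\cap\Real_0^{|c|}$. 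This alignment is obtained by combining the pointwise $\cL_0$-algebraicity supplied by $(\star)$ with a compactness argument and the approximation of $p$ by $\cL_0(\Real_0(A))$-definable types from \ref{approx type}. With this choice, $F(x)$ is determined at a generic $x$ by the canonical parameter of an $\eq{\cL}(A)$-definable set; hence $\germ{p}{F}$ is fixed by $\aut[\cL](M/A)$, and in particular has finite orbit.

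\textbf{Main obstacle.} The delicate step is the compactness/approximation above, which must reconcile the $\cL_0(\Real_0(M))$-definability required of $\phi$ with the $\eq{\cL}(A)$-definability of $\chi$ on a $p$-generic neighborhood. The approach parallels the proof of \ref{code germ cdef}: passing to definable approximations $(q_i)$ of $p$, one interprets germs at $q_i$ as real parameters via Hypothesis $(\dagger)$ and elimination of imaginaries in $T_0$, then runs a Neumann-style argument to force these real parameters to agree with the $\eq{\cL}(A)$-canonical parameter of $\chi(\eq{M},x)\cap\Real_0^{|c|}$. Passing to the limit in $I$ yields the finite-orbit conclusion at $p$ and thus the desired invariance.
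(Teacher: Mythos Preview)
Your construction of $F$ and the overall shape --- build an $\cL_0(M)$-definable $F$ with $c\in\acl_{\cL_0}(F(a))$, then verify the finite-orbit hypothesis of \ref{code germ cdef} --- matches the paper. The gap is in your ``germ invariance'' step: the strategy you sketch for obtaining finite orbit does not work as stated, and you yourself flag it as the main obstacle without actually carrying it out. You propose to refine $\phi$ so that the $\cL_0(\Real_0(M))$-definable set $\phi(\Real_0(M_0),x)$ \emph{coincides} generically with the $\eq{\cL}(A)$-definable set $\chi(\eq{M},x)\cap\Real_0^{|c|}$. There is no reason this should be achievable: the $\cL_0$-algebraic closure provided by $(\star)$ is only an upper bound on the $\cL$-algebraic closure, and may be strictly larger. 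The Neumann-style argument you allude to produces many distinct conjugates when orbits are infinite; it does not force two a priori different definable sets to agree.

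The paper's argument replaces equality by \emph{containment} plus a minimality trick, and this is the idea you are missing. One first takes an $\eq{\cL}(A)$-definable function $f$ with $f(a)$ a finite set containing $c$; by $(\star)$ one finds an $\cL_0(M)$-definable $F$ with $F(a)$ a finite set containing $f(a)$, and by compactness one may assume $f(x)\subseteq F(x)$ for all $x$. Now choose $F$ so that the (constant on $p$) cardinality $|F(a)|$ is minimal among all such $F$. For $\sigma\in\aut[\cL](M/A)$ one has $f(a)=f^\sigma(a)\subseteq F(a)\cap F^\sigma(a)$, so $F\cap F^\sigma$ also satisfies the containment; minimality forces $|F(a)\cap F^\sigma(a)|=|F(a)|=|F^\sigma(a)|$, hence $F(a)=F^\sigma(a)$. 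Thus $\germ{p}{F}$ is actually \emph{fixed} by $\aut[\cL](M/A)$ (not merely of finite orbit), and \ref{code germ cdef} finishes. No compactness/approximation reconciliation is needed.
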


\begin{proof}
Since \(c\in\eq{\acl_{\cL}}(Aa)\), we can find an
\(\eq{\cL}(A)\)-definable function \(f\) such that \(f(a)\) encodes a
finite set containing \(c\). By (\(\star\)), we have that
\(f(a)\subseteq \Real_0(\acl_{\cL}(Ma)) \subseteq
\acl_{\LL_0}(Ma)\). It follows that we can find an
\(\cL_0(M)\)-definable function \(F\) such that \(F(a)\) encodes (in
\(M_0\)) a finite set containing \(f(a)\). Note that we have \(c \in
f(a) \subseteq \acl_{\cL_0}(F(a))\).

By compactness (and replacing \(F\) with a finite union), we may
assume that \(f(x)\subseteq F(x)\) always holds. The cardinality of
\(F\) is constant on realizations of \(p\). We may assume it is
minimal among all possible \(F\). Let \(\sigma\in\aut[\cL](M/A)\),
then \(f(a) \subseteq F(a) \cap F^{\sigma}(a)\). By minimality, we
must have \(|F(a)\cap F^{\sigma}(a)| = |F(a)| = |F^{\sigma}(a)|\) and
hence \(F(a) = F^{\sigma}(a)\). We have just proved that
\(\germ{p}{F}\) is \(\aut[\cL](M/A)\)-invariant. By \ref{code germ
  cdef}, it is, in fact, \(\aut[\cL_0](M_0/\Real_0(A))\)-invariant.
\end{proof}

The conclusion of \ref{inv acl} is weaker than one might expect. It does not imply, in general, that \(c\in\acl_{\cL_0}(\Real_0(A)a)\). However, if germs of functions over \(p\) are particularly well-behaved, this can be the case. If \(p\in \TP_{\cL_0}(M_0)\) is \(\cL_0(A)\)-definable and \(f\) is \(\cL_0(M_0)\)-definable and defined at \(p\), we say that \(\germ{p}{f}\) is strong if there exists an \(\cL_0(A\germ{p}{f})\)-definable map \(g \in \germ{p}{f}\). This happens whenever \(p\) is generically stable, see  \cite{AdlCasPil}; in particular, if \(T_0\) is stable. In that case, it becomes quite clear that what we have been doing so far is indeed encode (germs of) functions:

\begin{corollary}[gen stab acl]
In the setting of \ref{inv acl}, assume \(p\) is definable and germs of functions over \(p\) are strong. Then \[\Real_0(\eq{\acl_{\cL}}(Aa)) \subseteq \acl_{\cL_0}(\Real_0(A)a).\]
\end{corollary}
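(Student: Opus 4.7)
The plan is to upgrade the conclusion of \ref{inv acl} by exploiting the strongness hypothesis to replace the $\cL_0(M)$-definable function $F$ produced there with an $\cL_0(\Real_0(A))$-definable function on realizations of $p$.

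Concretely, given $c \in \Real_0(\eq{\acl_{\cL}}(Aa))$, I first invoke \ref{inv acl} to obtain an $\cL_0(M)$-definable map $F$ such that $\germ{p}{F}$ is $\aut[\cL_0](M_0/\Real_0(A))$-invariant and $c \in \acl_{\cL_0}(F(a))$. Next, using elimination of imaginaries in $T_0$, this invariance means that the imaginary code of $\germ{p}{F}$ is represented by a real tuple in $\dcl_{\cL_0}(\Real_0(A))$. Invoking the strongness of germs over $p$ (with the parameter set $\Real_0(A)$, over which $p$ is by assumption definable) yields some $g \in \germ{p}{F}$ which is $\cL_0(\Real_0(A)\germ{p}{F})$-definable, and hence, by the preceding step, already $\cL_0(\Real_0(A))$-definable. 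Finally, since $a \in \cH(N) \subseteq \Real_0$ realizes $p = \tp_{\cL_0}(\Real_0(a)/M_0)$, the identity $F(a) = g(a)$ holds, giving $F(a) \in \dcl_{\cL_0}(\Real_0(A)\,a)$ and therefore $c \in \acl_{\cL_0}(F(a)) \subseteq \acl_{\cL_0}(\Real_0(A)\,a)$, as required.

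The only point that requires real care is the translation from $\aut[\cL_0](M_0/\Real_0(A))$-invariance of the germ to its being coded in $\dcl_{\cL_0}(\Real_0(A))$; this rests on elimination of imaginaries in $T_0$ together with sufficient saturation and homogeneity of $M_0$, which is implicit in the standing hypothesis that automorphisms of $M$ extend to $M_0$. Once this is in place, the rest of the argument is a direct unwinding of the definition of strongness combined with the conclusion of \ref{inv acl}.
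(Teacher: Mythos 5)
Your proof is correct and follows essentially the same route as the paper's: apply \ref{inv acl} to get $F$, use definability of $p$ plus elimination of imaginaries in $T_0$ to place $\germ{p}{F}$ in $\dcl_{\cL_0}(\Real_0(A))$, invoke strongness to replace $F$ with an $\cL_0(\Real_0(A))$-definable representative of the germ, and conclude via $c\in\acl_{\cL_0}(F(a))$. The only cosmetic difference is that you explain the justification for $\germ{p}{F}\in\dcl_{\cL_0}(\Real_0(A))$ more carefully than the paper does.
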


\begin{proof}
Pick any \(c\in \Real_0(\eq{\acl_{\cL}}(Aa))\) and let \(F\) be as in \ref{inv acl}. Since \(p\) is definable, we have that \(\germ{p}{F}\in\dcl_{\cL_0}(\Real_0(A)) \subseteq \Real_0(A)\). Also, since \(\germ{p}{F}\) is strong, we may assume that \(F\) is \(\cL_0(\Real_0(A))\)-definable. Thus, \(c\in\acl_{\cL_0}(F(a)) = \acl_{\cL_0}(G(a)) \subseteq \acl_{\cL_0}(\Real_0(A)a)\).
\end{proof}

For technical reasons, we will later need to involve a third intermediary language.
Let \(\cL_0\subseteq \cL_1\subseteq\cL\), \(T_{0,\forall}\subseteq
T_{1,\forall}\subseteq T\) and \(M \subseteq M_1\subseteq M_0\) be such
that \(T_1\) eliminates quantifiers, \(M_1\models T_1\) and every automorphism of \(M_1\) extends to
an automorphism of \(M_0\). In our application to pseudo $p$-adically closed fields, \(M_1\) will be the \(p\)-adic closure of our pseudo -$p$-adically closed field \(M\) with respect to one of its valuations, in Macintyre's language. Recall that \(M_0\) will be the algebraic closure of \(M\) with an extension of that same valuation. The reason we need to consider those three structures is that \(M_1\) is the one with the right quantifier free structure induced on \(M\), but \(M_0\) is the one with sufficiently many definable types.\medskip

However, the following corollary retains most of its content when \(T_0 = T_1\) and the reader should feel free to first consider that case. Not that even when \(T_0 = T_1 = T\), the statement is non-trivial and not known to be true in general. It is known, however, when \(T_0 = T_1 = T\) is \(\NIP\) by work of Hrushovski and Pillay \cite[Lemma\,2.12]{HruPil-NIP}.

\begin{corollary}[inv acl NIP]
 Assume \(T_1\) is \(\NIP\). Let \(N\supsel M\) and \(a \in \cH(N)\) be a tuple. If \(p:= \tp_{\LL_0}(\Real_0(a)/M_0)\in\cDef(M_0/\Real_0(A))\) and \(q:= \tp_{\LL_1}(\Real_0(a)/M_1)\in\Inv(M_1/\Real_0(A))\), then \(\tp_{\LL_1}(\Real_0(\eq{\acl_{\cL}}(Aa))/M_1)\in\Inv(M_1/\Real_0(A))\).
\end{corollary}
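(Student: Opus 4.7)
The plan is to apply Proposition \ref{inv acl} in the $\cL_0$-versus-$\cL$ setting to place each $c\in\Real_0(\eq{\acl_{\cL}}(Aa))$ inside the $\cL_1$-algebraic closure of a realization of $q$, and then to invoke \cite[Lemma\,2.12]{HruPil-NIP} inside the NIP theory $T_1$ to conclude. As a first reduction I would work tuple-by-tuple: it suffices to show, for each finite tuple $c\in\Real_0(\eq{\acl_{\cL}}(Aa))$, that $\tp_{\cL_1}(c/M_1)$ is $\Real_0(A)$-invariant.

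Applying \ref{inv acl} yields an $\cL_0(M)$-definable function $F$ with $c\in\acl_{\cL_0}(F(a))$ and $\germ{p}{F}$ being $\aut_{\cL_0}(M_0/\Real_0(A))$-invariant; thickening $F$ I may even assume $c$ lies in the finite set $F(a)$. Since $\cL_0\subseteq\cL_1$ and the parameters of $F$ lie in $M\subseteq M_1$, this already places $c$ in $\acl_{\cL_1}(M_1a)$. Next, I would transfer germ invariance from $\cL_0$ to $\cL_1$: given $\sigma\in\aut_{\cL_1}(M_1/\Real_0(A))$, lift it to $\tilde\sigma\in\aut_{\cL_0}(M_0/\Real_0(A))$ by the extension hypothesis, and note that the $\cL_0$-formula $F(x)=F^{\tilde\sigma}(x)$ belongs to $p$, hence also to $q$ (same inclusions $\cL_0\subseteq\cL_1$, $M\subseteq M_1$). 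So $F(a)=F^\sigma(a)$ and $\germ{q}{F}$ is $\aut_{\cL_1}(M_1/\Real_0(A))$-invariant; equivalently, the finite set $F(a)\ni c$ is set-wise $\sigma$-invariant after extending $\sigma$ to fix $a$ (which is possible by $\Real_0(A)$-invariance of $q$).

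The remaining task is to promote this set-wise invariance to pointwise invariance of $c$. Here the NIP hypothesis on $T_1$ enters: \cite[Lemma\,2.12]{HruPil-NIP} is exactly the degenerate case $T_0=T_1=T$ of the statement we are proving, and applied in $\cL_1$ to $q\in\Inv(M_1/\Real_0(A))$ and the $\cL_1$-algebraic element $c\in\acl_{\cL_1}(M_1a)$ (whose conjugacy class is now controlled set-wise by $F(a)$), it yields the desired $\Real_0(A)$-invariance of $\tp_{\cL_1}(c/M_1)$. The main obstacle is precisely this last upgrade: germ invariance only controls the finite set $F(a)$ modulo permutation of its elements, so the hypothetical failure of pointwise invariance of $\tp_{\cL_1}(c/M_1)$ would manifest as an $\Real_0(A)$-indiscernible sequence (extracted using the invariance of $q$) along which membership of $c$ in some $\cL_1$-definable family alternates — and it is exactly the NIP assumption on $T_1$ that prevents this, as in the Hrushovski–Pillay argument.
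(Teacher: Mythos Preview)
Your plan is essentially the paper's own argument: apply Proposition~\ref{prop:inv acl} to get an \(\cL_0(M)\)-definable \(F\) with \(c\in\acl_{\cL_0}(F(a))\) and \(\germ{p}{F}\) invariant, transfer germ invariance to \(\germ{q}{F}\) via the extension hypothesis on automorphisms, and then invoke \cite[Lemma\,2.12]{HruPil-NIP}. The first two steps are fine.

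The imprecision is in the last step. You apply \cite[Lemma\,2.12]{HruPil-NIP} to the pair \((q,c)\), citing \(c\in\acl_{\cL_1}(M_1a)\). But that lemma requires \(c\in\acl_{\cL_1}(\Real_0(A)a)\), not \(\acl_{\cL_1}(M_1a)\); the latter is strictly weaker and is not sufficient (already in a dense linear order, elements of \(M\subseteq\acl(Ma)\) have non-\(A\)-invariant types). Your set-wise invariance of \(F(a)\) does not immediately yield \(c\in\acl_{\cL_1}(\Real_0(A)a)\) either, since an automorphism of the \(T_1\)-monster fixing \(\Real_0(A)a\) need not stabilize \(M_1\), hence need not restrict to an element of \(\aut_{\cL_1}(M_1/\Real_0(A))\) to which the germ statement applies.

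The fix is exactly what the paper does: from \(q\in\Inv(M_1/\Real_0(A))\) and the \(\aut_{\cL_1}(M_1/\Real_0(A))\)-invariance of \(\germ{q}{F}\), deduce that \(\tp_{\cL_1}(F(a)/M_1)\in\Inv(M_1/\Real_0(A))\) (for any \(\sigma\), \(q\vdash\phi(F(x),m)\) iff \(q\vdash\phi(F^{\sigma}(x),\sigma(m))\) iff \(q\vdash\phi(F(x),\sigma(m))\)). Now apply \cite[Lemma\,2.12]{HruPil-NIP} with \(F(a)\) in the role of the realization and \(c\in\acl_{\cL_1}(F(a))\subseteq\acl_{\cL_1}(\Real_0(A)F(a))\); this is the hypothesis the lemma actually needs, and the conclusion follows.
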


\begin{proof}
By \ref{inv acl}, we can find \(F\) such that \(\germ{p}{F}\) is
\(\aut[\LL_0](M_0/\Real_0(A))\)-invariant and
\(\Real_0(\eq{\acl_{\cL}}(Aa))\subseteq\acl_{\cL_0}(F(a))\subseteq\acl_{\cL_1}(F(a))\). Since
automorphisms of \(M_1\) extend to automorphisms of \(M_0\),
\(\germ{q}{F}\) is \(\aut[\LL_1](M_1/\Real_0(A))\)-invariant and hence \(\tp_{\LL_1}(F(a)/M_1)\in\Inv(M_1/\Real_0(A))\). By \cite[Lemma\,2.12]{HruPil-NIP}, we now have that \(\tp_{\cL_1}(\acl_{\cL_1}(F(a))/M_1)\in\Inv(M_1/\Real_0(A))\). In particular, \(\tp_{\LL_1}(\Real_0(\eq{\acl_{\cL}}(Aa))/M_1)\in\Inv(M_1/\Real_0(A))\).
\end{proof}

Note that, in the above corollary, we only obtain an invariant type and not, \emph{a priori}, a definably approximable one. Let us conclude with examples of theories in which \(\Inv(M/A) = \cDef(M/A)\), in which case we will be able to continue with the induction and build invariant types in all arities, cf. \ref{loc dens higher}.

\begin{proposition}[crit dens def]
  Let \(T\) be an \(\LL\)-theory that weakly eliminates imaginaries
  and \(M\) be sufficiently saturated. Assume:
  \begin{enumerate}[label=(\roman*),ref=(\roman*)]
  \item\label{strong dens def} for all \(N\subsel M\models T\),
    \(\Inv(M/N) = \cDef(M/N)\);
  \item\label{weak dens def} for all \(A=\acl(A)\subseteq M\models T\),
    any unary $\LL(A)$-definable set is consistent with a
    type $p\in\Def(M/A)$.
  \end{enumerate}
  Then, for all \(A=\acl(A)\subseteq M\models T\), \(\Inv(M/A) =
  \cDef(M/A)\).
\end{proposition}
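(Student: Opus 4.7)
The plan is to show that each formula $\phi(x, m) \in p$ with $m \in M$ is contained in some $q \in \Def_x(M/A)$. First, using weak elimination of imaginaries and $A = \acl(A)$, replace $m$ by a real tuple $c \in M$ interalgebraic (in $\eq{M}$) with a canonical parameter of $\phi(M, m)$, so that $\phi(M, m) = \psi(M, c)$ for some formula $\psi(x, y)$ and thus $\psi(x, c) \in p$. By $\aut(M/A)$-invariance of $p$, $\psi(x, c') \in p$ for every $c' \in M$ with $c' \equiv_A c$; equivalently, any $\aut(M/A)$-invariant type (hence any $q \in \Def(M/A)$) that contains $\psi(x, c')$ for some $c' \equiv_A c$ automatically contains $\psi(x, c)$.

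The strategy combines both hypotheses. Applying \ref{strong dens def} to a small model $N$ with $A \cup \{c\} \subseteq N \preceq M$ yields $q_0 \in \Def_x(M/N)$ containing $\psi(x, c)$. Let $b \in N$ be a real tuple, containing $c$, that is interalgebraic with the canonical base of $q_0$ (which exists by weak elimination of imaginaries applied to the code of $q_0$); then the defining scheme of $q_0$ may be written as $d_{q_0}\chi(z) = \delta_\chi(z, b)$ for certain $\cL(A)$-formulas $\delta_\chi$.

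Independently, applying \ref{weak dens def} iteratively to the coordinates of $b$ and using that $\cDef_y(M/A)$ is closed in $\TP_y(M)$, one extends $\tp(b/A)$ to a type $r \in \cDef_y(M/A)$. The type $r$ is then approximated by $r_i \in \Def_y(M/A)$, giving, for each formula $\chi$, an $\cL(A)$-formula $D_\chi^i(z)$ expressing "$\delta_\chi(z, y) \in r_i$"; these defining schemes assemble into types $q_i \in \Def_x(M/A)$. Since $c \in \dcl(b)$ and $\psi(x, c) \in q_0$ corresponds to a particular defining condition $\delta_\psi(\cdot, b)$, a limit argument along the directed system of $r_i$'s, together with $\aut(M/A)$-invariance of $p$, then yields $q \in \Def_x(M/A)$ containing $\psi(x, c) = \phi(x, m)$.

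The hard part I expect is making the averaging step rigorous: one must verify that the schemes $D_\chi^i$ assemble consistently into genuine definable types $q_i$, and that the limit of these contains the specific formula $\psi(x, c)$ rather than merely $A$-invariant approximations of it. The crucial input is the $\aut(M/A)$-invariance of $p$, which controls how $q_0$ varies as $c$ (and hence $b$) moves within its $\tp$-class over $A$, and which is approximated by the $r_i$. For tuple variables $x$, hypothesis \ref{weak dens def} does not apply to the higher-arity auxiliary formulas that appear; this is handled by induction on the arity of $x$, reducing to the unary case one variable at a time via the canonical $A$-invariant extension.
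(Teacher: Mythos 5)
Your high-level strategy is the right one and shares the key idea with the paper's proof: use Hypothesis~\ref{strong dens def} over a model to get a definable type, then use Hypothesis~\ref{weak dens def} to push the parameters down to $A$. However, the specific mechanism you propose --- averaging defining schemes --- has a genuine gap, which you yourself flag, and it does not get resolved.

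Concretely: you set $D^i_\chi(z) := d_{r_i}(y;\,\delta_\chi(z,y))$ and assert that the schemes $D^i_\chi$ \emph{assemble into types} $q_i\in\Def_x(M/A)$. For this to hold, a realization $b'\models \restr{r_i}{M}$ would have to satisfy the (infinite) partial $\LL(A)$-type $\Sigma(y)$ asserting that the scheme $\chi\mapsto\delta_\chi(\cdot,y)$ defines a complete consistent type over $M$. We know $b\models\Sigma$ and $\Sigma\subseteq\tp(b/A)=\restr{r}{A}$, but the approximants $r_i$ only satisfy each \emph{single} formula of $\Sigma$ for almost all $i$; since $\Sigma$ is infinite, no single $r_i$ need contain all of $\Sigma$, and there is no reason for any $q_i$ to be a genuine type. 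This is not a technicality that the "limit argument" can absorb: a limit of the $q_i$ would only lie in $\cDef(M/A)$, and the step that a genuine $q_i\in\Def(M/A)$ contains $\psi(x,c)$ is exactly what is needed. There is a second, unacknowledged gap: you need $\delta_\psi(c,y)\in r$, which is a condition over $Ac$ and is not guaranteed by $\restr{r}{A}=\tp(b/A)$; you would have to build $r$ with $b\models\restr{r}{Ac}$. Finally, the "iterative" construction of a type in $\cDef(M/A)$ extending $\tp(b/A)$ is itself nontrivial: one must pass from the first coordinate to the second over a base that is no longer algebraically closed, which requires the algebraic-closure and transitivity lemmas for definable types that you do not prove.

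The paper avoids all of this by inverting the order of operations. Instead of first applying~\ref{strong dens def} over $N$ and then trying to push the canonical base down to $A$ by averaging a scheme, it first builds a \emph{new} elementary submodel $N^\star\subsel M$ containing $A$ with $\tp(N^\star/N)\in\Def(N/A)$, and only then applies~\ref{strong dens def} over $N^\star$. The construction of $N^\star$ is where Hypothesis~\ref{weak dens def} and weak elimination of imaginaries are used: one iterates~\ref{weak dens def} (applied to unary formulas), handles the intervening algebraic closures via a claim that definable types are preserved under passing to $\acl$ (this is exactly where weak EI enters), composes via transitivity of definable types, and closes under Tarski--Vaught. Having $q\in\Def(M/N^\star)$ with $\phi\in q$ and $\tp(N^\star/N)\in\Def(N/A)$, one realization $c\models q$ and one application of transitivity gives $\tp(N^\star c/N)\in\Def(N/A)$; the restriction $\tp(c/N)$ then lies in $\Def(N/A)$ and contains $\phi$. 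This replaces the fragile averaging of defining schemes by the robust composition of actual definable types, and it sidesteps the need to build a $\cDef(M/A)$ extension of $\tp(b/Ac)$. I would recommend reorganizing along these lines, proving the three auxiliary claims (definable types pass to $\acl$; transitivity; existence of the $A$-definable model $N^\star$) before the main argument.
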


Note that the converse is obvious.

\begin{proof}
Let us start with some classic results.

\begin{claim}[def typ acl]
Let \(A=\acl(A)\subseteq M\). Assume \(\tp(a/M)\in\Def(M/A)\) and \(c
\subseteq \acl(Aa)\), then \(\tp(c/M)\in\Def(M/A)\).
\end{claim}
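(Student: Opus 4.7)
Let \(p := \tp(a/M)\). The plan is to construct a definition scheme for \(q := \tp(c/M)\) over \(A\) from that of \(p\). First I fix an \(\cL(A)\)-formula \(\psi(y, x)\) witnessing \(c \in \acl(Aa)\): \(\psi(c, a)\) holds and \(|\psi(\cdot, a)| = n\). Since \(p\) is complete, \(p \vdash |\psi(\cdot, x)| = n\), so the \(n\) conjugates of \(c\) over \(Aa\) persist uniformly for realizations \(x \models p\).

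The core tool is counting. For each \(\cL\)-formula \(\phi(y, z)\) and \(k \leq n\), the formula \(\chi_k(x, z) := \exists^{=k} y\,(\psi(y, x) \wedge \phi(y, z))\) is \(\cL(A)\); hence \(\defsc{p}{x}\chi_k(x, z)\) is an \(\cL(A)\)-formula in \(z\) which, for \(m \in M\), says that exactly \(k\) conjugates of \(c\) over \(Aa\) satisfy \(\phi(\cdot, m)\). Whenever \(k \in \{0, n\}\), this immediately pins down the truth of \(\phi(c, m)\), producing an \(\cL(A)\)-definition of \(q\) on the corresponding parameter sets.

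For intermediate \(0 < k < n\), I would iterate: adjoin successive distinct conjugates \(c_2, c_3, \ldots\) of \(c\) over \(Aa\) to the base, each time strictly decreasing the number of remaining conjugates of \(c\), until eventually \(c \in \dcl(Aac_2 \cdots c_i)\). At that stage, \(c = f(a, c_2, \ldots, c_i)\) for some \(\cL(A)\)-definable \(f\), and the direct image principle applied to the joint type \(p'\) of \((a, c_2, \ldots, c_i)\) over \(M\) yields the required \(\cL(A)\)-definition \(\defsc{p'}{x}\phi(f(x), z)\).

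The hard part will be showing that the joint type of \((a, c_2, \ldots, c_i)\) remains in \(\Def(M/A)\) at each stage, without acquiring parameters outside \(A\). The hypothesis \(A = \acl(A)\) is crucial here: it forces the \(\cL(A)\)-definable data produced at each iteration (such as the codes of the finite sets of conjugates) to stay inside \(A\), which is what ultimately closes the induction and delivers the definition scheme over \(A\).
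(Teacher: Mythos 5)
There is a genuine gap, and you have correctly located it yourself: the sentence beginning ``The hard part will be showing that the joint type of \((a, c_2, \ldots, c_i)\) remains in \(\Def(M/A)\)'' is exactly where your argument fails, and the reason you give for why it should work is not valid. You invoke \(A = \acl(A)\) to claim that ``the codes of the finite sets of conjugates\ldots stay inside \(A\),'' but \(\acl\) here is algebraic closure inside \(M\), not \(\acleq\) inside \(\eq{M}\). A code for a finite set is a priori an imaginary element, so \(A = \acl(A)\) gives you nothing about it. Indeed, without an extra hypothesis the claim is simply false: if \(p\) is \(\cL(A)\)-definable and \(c \in \acl(Aa)\), the general conclusion is only that \(\tp(c/M)\) is \(\cL(\acleq(A))\)-definable, and imaginary parameters can really be needed. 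The missing ingredient is the standing hypothesis of Proposition~\ref{crit dens def} that \(T\) weakly eliminates imaginaries; this claim is a step inside that proposition's proof and is allowed to use it, but your argument never does.

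The paper's proof is also shorter and avoids any iteration. Fixing an algebraizing formula \(\phi(y,a)\) for \(c\) over \(Aa\) and an arbitrary \(\cL\)-formula \(\psi(y,z)\), one declares \(z_1 E z_2\) to hold when \(\defsc{p}{x}\,(\forall y\,(\phi(y,x)\impform(\psi(y,z_1)\iffform\psi(y,z_2))))\). This is a finite \(\cL(A)\)-definable equivalence relation on the parameter sort, so by weak elimination of imaginaries each \(E\)-class is \(\cL(A)\)-definable; and the set of parameters \(m\) with \(\psi(c,m)\) is a finite union of \(E\)-classes, hence \(\cL(A)\)-definable. This handles your ``intermediate \(0<k<n\)'' cases in one stroke, without ever having to choose or order conjugates. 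Your counting observation for \(k\in\{0,n\}\) is correct as far as it goes, but it is not a route to a complete proof; and even if you did push the iteration through with weak EI, you would be reproving the equivalence-relation argument in a more laborious way.
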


\begin{proof}
We may assume that \(c\) is a finite tuple. Let \(\phi(y, x)\) be an
\(\cL(A)\)-formula such that \(\phi(y, a)\) algebrizes \(c\) over
\(Aa\). Let \(p = \tp(a/M)\) and pick \(\psi(y, z)\) any
\(\cL\)-formula. We define \(z_1 E z_2\) to hold if
\(M\models\defsc{p}{x}\,(\forall y\,\phi(y, x) \impform
(\psi(y, z_1)\iffform \psi(y, z_2)))\). This is an \(\cL(A)\)-definable
finite equivalence relation and its classes are \(\cL(A)\)-definable by weak elimination of imaginaries. Moreover, \(\phi(c,M)\) is a finite union
of \(E\)-classes. It is therefore \(\cL(A)\)-definable.
\end{proof}

\begin{claim}[trans def]
Pick \(A\subseteq N\subsel M\), \(p\in\Def(N/A)\), \(a\models p\) in \(M\),
\(q\in\Def(M/Aa)\) and \(c\models q\). Then \(\tp(ac/N)\in\Def(N/A)\).
\end{claim}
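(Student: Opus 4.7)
The plan is to build, for each $\cL$-formula $\phi(x, y, z)$, an $\cL(A)$-formula that defines the set $\{n \in N : \phi(a, c, n) \in \tp(ac/N)\}$, by successively applying the definition schemes of $q$ and then of $p$. This is essentially the well-known statement that the Morley product of two definable types is definable, transposed to the present notation.

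First I would unpack $q$. Since $c\models q$ and $q\in\Def(M/Aa)$, applying $\defsc{q}{y}$ to the formula $\phi(w_1, y, w_2)$, with $y$ regarded as the bound variable and $(w_1,w_2)$ as parameter slots, produces an $\cL(Aa)$-formula. Writing the parameters from $a$ explicitly, it has the form $e(w_1, w_2, a)$ for some $\cL(A)$-formula $e(w_1, w_2, u)$, so that $\phi(a, c, n)\in\tp(ac/N)$ if and only if $M\models e(a, n, a)$.

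The key move is then to identify syntactically the two occurrences of $a$: I would set $\phi'(w_1, w_2) := e(w_1, w_2, w_1)$, which is again an $\cL(A)$-formula. Then $M\models e(a, n, a)$ iff $\phi'(w_1, n)\in p$, and since $p\in\Def(N/A)$ this is equivalent to $M\models \defsc{p}{w_1}\,\phi'(w_1, n)$. This furnishes, for each $\phi$, an $\cL(A)$-formula in the variable $z$ defining the corresponding slice of $\tp(ac/N)$, which is exactly the required definition scheme.

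I do not anticipate any substantial obstacle. The only point requiring attention is the bookkeeping of variables, in particular identifying the parameter slot $u$ of $e$ with the free variable $w_1$ of the outer definition scheme $\defsc{p}{w_1}$; once that substitution is carried out, the rest is formal manipulation of definition schemes.
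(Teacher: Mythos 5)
Your proof is correct and takes essentially the same route as the paper: first apply the $q$-definition scheme to reduce $\phi(a,c,n)$ to an $\cL(A)$-condition $e(a,n,a)$, then identify the two occurrences of $a$ to form a formula in a single variable, and finally apply the $p$-definition scheme. The bookkeeping of variable identification is handled the same way in both.
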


\begin{proof}
Pick \(\psi(x, y, z)\) any \(\cL\)-formula. Let \(\theta(y, z, a)\) be an
\(\cL(A)\)-formula such that, for all \(b\), \(d\in M\),
\(\psi(b, y, d)\in q\) if and only if \(M\models\theta(b, d, a)\). Then,
for all \(d\in N\), \(M\models\psi(a, c, d)\), if and only if \(M\models\theta(a, d, a)\), if and only if
\(M\models\defsc{p}{x}\,\theta(x, d, x)\).
\end{proof}

\begin{claim}[def model]
For all \(N\subsel M\) containing \(A\), there exists \(N^\star
\subsel M\) containing \(A\) such that \(\tp(N^\star/N)\in\Def(N/A)\).
\end{claim}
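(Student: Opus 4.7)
The plan is to build $N^\star$ as the limit of a transfinite chain $(N^\star_\alpha)_{\alpha \leq \kappa}$, where each $N^\star_\alpha$ is algebraically closed in $M$, contains $A$, and satisfies $\tp(N^\star_\alpha/N) \in \Def(N/A)$. Start with $N^\star_0 = A$, for which the invariant is trivial. Limit stages are taken as unions, and the invariant is preserved since any finite subtuple of $\bigcup_{\beta<\lambda} N^\star_\beta$ lies in some $N^\star_\beta$. For $\kappa$ of sufficiently large cofinality, with a bookkeeping enumeration ensuring that every consistent unary $\cL(N^\star_\kappa)$-formula is addressed at some successor stage, the Tarski--Vaught test will guarantee $N^\star := N^\star_\kappa \subsel M$.

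The successor step is where claims \ref{trans def} and \ref{def typ acl} enter. Given a consistent unary $\cL(N^\star_\alpha)$-formula $\phi(x)$ to realize at stage $\alpha + 1$, note that $N^\star_\alpha = \acl(N^\star_\alpha)$, so hypothesis \ref{weak dens def} produces some $p \in \Def(M/N^\star_\alpha)$ with $\phi \in p$; let $c_\alpha \in M$ realize $p$. Since $A \subseteq N^\star_\alpha$, the equality $\Def(M/AN^\star_\alpha) = \Def(M/N^\star_\alpha)$ allows \ref{trans def} to be applied with $a = N^\star_\alpha$ and $c = c_\alpha$, yielding $\tp(N^\star_\alpha c_\alpha / N) \in \Def(N/A)$. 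Then setting $N^\star_{\alpha+1} := \acl_{\cL}(N^\star_\alpha c_\alpha)$, invoke \ref{def typ acl} to extend the definability to $N^\star_{\alpha+1}$.

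The main point to verify carefully is that \ref{def typ acl}, stated over the ambient model $M$, applies when $M$ is replaced by $N$. Its proof is however insensitive to this substitution: the $\cL(A)$-definable finite equivalence relation built from the defining schema of $p$, together with the $\cL(A)$-definability of its classes obtained from weak elimination of imaginaries, are both unaffected by which elementary extension of $A$ the types are taken over. Beyond that, the construction is a routine Tarski--Vaught build with cardinal bookkeeping, which is where the hypothesis that $M$ be sufficiently saturated gets used to guarantee that the definable types produced at each successor stage can be realized in $M$.
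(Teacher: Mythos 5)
Your proposal is correct and follows essentially the same strategy as the paper: a Tarski--Vaught construction in which each new witness is produced by Hypothesis~\ref{weak dens def} and the definability invariant $\tp(\,\cdot\,/N)\in\Def(N/A)$ is propagated through the chain by \ref{trans def} and \ref{def typ acl}. The only differences are organizational --- you run a single transfinite chain with an explicit bookkeeping enumeration, whereas the paper nests a transfinite construction inside an $\omega$-chain so the bookkeeping is automatic --- and you helpfully flag (rather than leave implicit, as the paper does) that \ref{def typ acl} is being applied with $N$ in place of the ambient $M$.
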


\begin{proof}
  Let \(\{\phi_i(x_i)\mid i\in\kappa\}\) be an enumeration of all
  consistent \(\LL(A)\)-formulas in one variable $x_i$. By induction,
  for all \(i\), we build \(a_i\in M\) such that
  \(M\models\phi_i(a_i)\), \(c_i = \acl(Ac_{<i}a_i)\) and
  \(\tp(c_i/N)\in\Def(N/A)\). The element \(a_i\) is found by
  Hypothesis\,\ref{weak dens def} applied over \(Ac_{<i} =
  \acl(Ac_{<i})\). Then by \ref{trans def},
  \(\tp(c_{<i}a_i/N)\in\Def(N/A)\). Let \(c_i = \acl(Ac_{<i}a_i)\). By
  \ref{def typ acl}, \(\tp(c_i/N)\in\Def(N/A)\).

  Let \(d_0 = A\) and \(d_1 = c_{<\kappa} = \acl(A d_1)\). By
  repeating the above construction, we obtain \((d_i)_{i\in\omega}\)
  such that, for all \(i\in\omega\),
  \(\tp(d_i/N)\in\Def(N/A)\) and any consistent
  \(\LL(d_{<i})\)-formula in one variable is realized in \(d_i\). Let \(N^\star\) be the set enumerated by \(d_{<\omega}\). By
  Tarski-Vaught, \(N^\star\subsel M\). Moreover, \(A\subseteq
  N^\star\) and \(\tp(N^\star/N)\in\Def(N/A)\).
\end{proof}

Pick \(p\in\Inv(M/A)\). We have to show that for all
\(\LL(M)\)-formula \(\phi(x)\), if \(p\vdash \phi\), then there exists
\(q\in\Def(M/A)\) such that \(q\vdash \phi\). Actually, it suffices to
find such a \(q\in\Def(N/A)\) for some \(N\subsel M\) containing \(A\)
and such that \(\phi\in\LL(N)\). By \ref{def model}, we find
\(A\subseteq N^\star\subseteq M\) such that
$\tp(N^\star/N)\in\Def(N/A)$. By Hypothesis\,\ref{strong dens def}, we
find \(q\in\Def(M/N^\star)\) consistent with \(\phi\). Let \(c\models
q\), by \ref{trans def}, \(\tp(N^\star c/N)\in\Def(N/A)\). In
particular, \(\phi\in \restr{q}{N} = \tp(c/N)\in\Def(N/A)\).
\end{proof}

\begin{remark}[unary]
  Let $\cH$ be a set of dominant sorts in $T$, i.e. any other sort is
  the image of a $\emptyset$-definable function whose domain is a
  product of sorts in $\cH$. Then, in \ref{crit dens def}, it suffices
  to assume Hypothesis\,\ref{weak dens def} for every definable subsets of every sort in \(\cH\).
\end{remark}

\begin{corollary}[dens def Cmin]
  Let \(T\) be any \(C\)-minimal theory which
  weakly eliminates imaginaries. Then, for all \(A = \acl(A)\subseteq
  M\models T\) sufficiently saturated, \(\Inv(M/A) = \cDef(M/A)\).
\end{corollary}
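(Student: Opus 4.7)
The strategy is to apply the criterion \ref{crit dens def} and verify its two hypotheses. By \ref{unary}, since the $C$-sort is dominant, it will suffice to check hypothesis \ref{weak dens def} for unary definable subsets of this main sort.

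For hypothesis \ref{strong dens def}, density of definable types among invariant types over models, I would invoke the Simon--Starchenko theorem \cite{SimSta-DensDef}. A $C$-minimal theory is in particular $\dpr$-minimal, so their result applies directly: over any elementary submodel $N \subsel M$, every $\cL(N)$-invariant type is approximated by $\cL(N)$-definable types.

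For hypothesis \ref{weak dens def}, let $A = \acl(A) \subseteq M$ and let $X$ be a nonempty $\cL(A)$-definable subset of the main $C$-sort. By Holly's description of definable sets in $C$-minimal structures, $X$ is a finite Boolean combination of balls, hence a finite union of Swiss cheeses. I would split into cases: if $X$ meets $A = \acl(A)$ in a point $a$, the realized type $\tp(a/M)$ is trivially in $\Def(M/A)$ and concentrates on $X$. Otherwise, $X$ contains a ball $B$ with no proper $A$-definable sub-ball hitting all of $X$; the code $\code{B}$ of $B$ lies in $\dcleq(A) \cap A = A$ thanks to weak elimination of imaginaries. The generic type of $B$---the global type asserting ``$x \in B$ and $x$ lies in no $\cL(M)$-definable proper sub-ball of $B$''---is then $\cL(A)$-definable (its defining scheme reduces, by $C$-minimality, to a uniform statement about which sub-balls are proper, and the relevant codes are in $A$) and consistent with $X$.

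The main obstacle is the second hypothesis, and more precisely the passage from ``$B$ has code in $\dcleq(A)$'' to ``$B$ has code in $A$'', which is exactly where weak elimination of imaginaries enters: it ensures that $A = \acl(A)$ is closed under the ball codes that parametrize the canonical generic types, so that the definable type produced is indeed $\cL(A)$-definable and not merely $\cL(\acleq(A))$-definable. Once these two hypotheses are established, \ref{crit dens def} yields $\Inv(M/A) = \cDef(M/A)$, completing the proof.
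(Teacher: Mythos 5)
Your proposal takes essentially the same route as the paper: verify \ref{weak dens def} by producing generic types of balls, then invoke Simon--Starchenko to get \ref{strong dens def} over models.

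A few of your formulations are slightly off, however. First, the clean way to choose the ball \(B\) is the one the paper uses: take \(B\) to be an \emph{outer ball of the canonical Swiss cheese decomposition of \(X\)}, so that \(X\) contains \(B\) minus finitely many proper sub-balls and the generic type of \(B\) automatically concentrates on \(X\). Your phrase ``a ball \(B\) with no proper \(A\)-definable sub-ball hitting all of \(X\)'' does not obviously pick out such a \(B\), nor does it explain why the generic type of \(B\) implies \(x \in X\). Second, ``\(\dcleq(A) \cap A = A\)'' is a tautology and not what you want; the actual chain is \(\code{B} \in \acleq(\code{X}) \subseteq \acleq(A)\) (by canonicity of the decomposition), then weak elimination of imaginaries gives a real tuple \(b \in \acleq(\code{B}) \cap M \subseteq \acleq(A)\cap M = \acl(A) = A\) with \(\code{B} \in \dcleq(b)\), so the generic type is \(\cL(A)\)-definable. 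Third, Simon--Starchenko's theorem does not apply ``directly'' to every \(\dpr\)-minimal theory: it is conditional on their property (D), and the point the paper makes is precisely that hypothesis \ref{weak dens def} (which you have just verified) implies property (D) by their Lemma 2.4, so \ref{strong dens def} is a consequence of \ref{weak dens def} rather than an independent verification. These are corrections of formulation rather than of strategy; the idea is the right one.
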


\begin{proof}
Hypothesis\,\ref{weak dens def} of \ref{crit dens def} holds for definable subsets of \(\K\)
in \(C\)-minimal theories by taking the generic type of any outer ball
of the Swiss cheese decomposition. In \cite[Theorem\,2.3]{SimSta-DensDef}, Simon and Starchenko show that for \(\dpr\)-minimal theories (in particular, \(C\)-minimal theories), Hypothesis\,\ref{strong dens def} of \ref{crit dens def} follows from Hypothesis\,\ref{weak dens def}. Note that, by \cite[Lemma\,2.4]{SimSta-DensDef}, Hypothesis\,\ref{weak dens def} does imply their property (D).
\end{proof}

\subsection{A criterion using amalgamation}\label{s:crit}

The following criterion is an attempt at an abstract account of
the proof given by Hrushovski in \cite[Proposition\,3.1]{Hru-PAC} and adapted in
many various settings since then. Let $T$ be an $\cL$-theory with
sorts $\Real$ and $M\models T$ be sufficiently saturated and
homogeneous.

\begin{proposition}[crit wEI]
  Let $\indep$ be a ternary relation on small subsets of $M$. Assume:
  \begin{enumerate}[label=(\roman*),ref=(\roman*)]
  \item for all $E = \acleq(E)\subseteq \eq{M}$, tuple
    $a\in M$ and $C\subseteq M$, there exists $a^\star$ such that
    $a^\star \equiv_{\cL(E)} a$ and $a^\star\indep_{\Real(E)} C$;
  \item\label{H:amalg} for all $E = \acl(E)\subseteq M$ and tuples
    $a, b, c\in M$, if $b\indep_E a$, $c\indep_E ab$ and
    $a\equiv_{\LL(E)} b$, then there exists $c^\star$ such that $ac
    \equiv_{\LL(E)} ac^\star \equiv_{\LL(E)} bc^\star$.
  \end{enumerate}
  Then $T$ weakly eliminates imaginaries.
\end{proposition}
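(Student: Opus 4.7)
The plan is to argue by contradiction. Suppose $e \notin \dcleq(E_0)$ with $E_0 := \Real(\acleq(e))$; then some $e' \neq e$ is $E_0$-conjugate to $e$, realized by $\tau \in \aut(\eq{M}/E_0)$ sending $e$ to $e'$. I will build real tuples $a, b, c$ with $f(a) = e$, $f(b) = e'$ and $f(c) = e$, then use hypothesis \ref{H:amalg} to produce a tuple $c^\star$ that is amalgamable with both $a$ and $b$, which will force $e = e'$.

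Pick $a \in M$ with $e = f(a)$ for some $\emptyset$-definable $f$, and set $b_0 := \tau(a)$, so $f(b_0) = e'$. First apply (i) with $E = \acleq(e')$ and $C = a$ to $b_0$, yielding $b \equiv_{\cL(\acleq(e'))} b_0$ with $b \indep_{E_0} a$; since $e' \in \dcleq(b_0) \cap \acleq(e')$, we still have $f(b) = e'$, and $a \equiv_{\cL(E_0)} b$ because $\tau$ fixes $E_0$ pointwise. A second use of (i), now with $E = \acleq(e)$ and $C = ab$ applied to $a$, gives $c \equiv_{\cL(\acleq(e))} a$ with $c \indep_{E_0} ab$ and $f(c) = e$.

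Now $E_0 = \acl(E_0)$: any element of $M$ algebraic over $E_0$ lies in $\acleq(e) \cap M = E_0$. Hence (ii) applies to $(a,b,c)$ over $E_0$, producing $c^\star$ with $ac \equiv_{\cL(E_0)} ac^\star \equiv_{\cL(E_0)} bc^\star$. From the first equivalence, an automorphism of $\eq{M}$ fixing $E_0 a$ sends $c$ to $c^\star$; since it fixes $e = f(a)$, we get $f(c^\star) = e$. From the second equivalence, an automorphism fixing $E_0 c^\star$ sends $a$ to $b$; it fixes $f(c^\star) = e$, but maps $f(a) = e$ to $f(b) = e'$, so $e = e'$, a contradiction.

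I expect the main subtlety to be orchestrating the two hypotheses over different base sets. Hypothesis (i) must be applied over the richer $\eq$-algebraically closed bases $\acleq(e)$ and $\acleq(e')$ so that the independent extensions still code $e$ and $e'$; hypothesis (ii), however, requires a real algebraically closed base and must be applied over the strictly smaller $E_0$. Ensuring the conjugates produced by (i) meet exactly the independences $b \indep_{E_0} a$ and $c \indep_{E_0} ab$ required by (ii) is the technical heart of the argument.
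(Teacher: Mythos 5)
Your proof is correct and follows essentially the same strategy as the paper's: both apply hypothesis (i) twice over the $\eq$-algebraically closed sets $\acleq(e)$ (resp.\ its conjugate $\acleq(e')$) to obtain the independences required by (ii) over $E_0=\Real(\acleq(e))$, and then read off $e=f(b)$ from the two amalgamated equivalences. The only cosmetic difference is that the paper argues directly (taking an arbitrary $b\equiv_{\cL(E_0)}a$ and showing $f(b)=e$) whereas you argue by contradiction with a witnessing conjugate $e'\neq e$; these are interchangeable, and your bookkeeping of which base each application of (i) runs over, and why $\Real(\acleq(e'))=E_0$ and $E_0=\acl(E_0)$, is exactly the technical content the paper's proof relies on.
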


\begin{proof}
  Pick any $e\in\eq{M}$. There exists an $\emptyset$-definable map $f$
  and a tuple $a\in M$ such that $e = f(a)$. Let $\overline{E} =
  \acleq(e)$ and $E = \Real(\overline{E})$. We want
  $e\in\dcleq(E)$. It suffices to prove that $\tp(a/E)\vdash f(a) =
  e$.
  
  Pick any $b \equiv_{\cL(E)} a$. By Hypothesis\,(i), there exists
  $b^\star\equiv_{\cL(\overline{E})} b $ such that $b^\star \indep_E
  a$. Applying Hypothesis\,(i) again, we find
  $c\equiv_{\cL(\overline{E})} a$ such that $c\indep_{E} ab$. Note
  that we have $f(c) = e = f(a)$. Now, applying Hypothesis\,(ii), we
  find $c^\star$ such that
  $ac\equiv_{\cL(E)}ac^\star\equiv_{\cL(E)}b^\star c^\star$. It
  follows that $e = f(a) = f(c) = f(c^\star) = f(b^\star)$. Since
  $b^\star\equiv_{\cL(\overline{E})} b $ and $e\in\overline{E}$, we
  have that $f(b) = e$.
\end{proof}

The notion of independence that we will be using, in this paper, is quantifier free
invariant independence.

\begin{definition}[qf ind]
  \begin{enumerate}
  \item Let $A\subseteq M$. A quantifier free type $p$ over $M$ is said to
    be $\aut(M/A)$-invariant if for every quantifier free
    $\cL$-formula $\phi(x, y)$, $a\in M$ and $\sigma\in\aut(M/A)$,
    $\phi(x, a)\in p$ if and only if $\phi(x,\sigma(a))\in p$.
  \item Let $a \in M$ be a tuple and $C,B\subset M$. We define
    $a\qfindep_C B$ to hold if there exists an $\aut(M/A)$-invariant
    quantifier free type $p$ over $M$ such that $a\models
    \restr{p}{CB}$.
  \end{enumerate}
\end{definition}

We write \(a \qfequiv[\cL(E)] b\) to say \(a\) and \(b\) have the same quantifier free type over \(E\).

\begin{lemma}[indep suff]
  Hypothesis\,\ref{H:amalg} of \ref{crit wEI} for $\qfindep$ follows from:
  \begin{enumerate}
  \item[(ii')] for all $E = \acl(E)\subseteq M$ and
    tuples $a_1,a_2,c_1,c_2,c\in M$, if $a_1\qfindep_E a_2$,
    $c\qfindep_E a_1a_2$ and $c_1\equiv_{\LL(E)} c_2$ and, for all
    $i$, $a_ic_i \qfequiv[\cL(E)] a_i c$, then there exists $c^\star$
    such that $a_ic_i \equiv_{\LL(E)} a_ic^\star$, for all $i$.
  \end{enumerate}
\end{lemma}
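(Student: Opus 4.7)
The plan is to reduce \ref{crit wEI}\ref{H:amalg} for $\qfindep$ directly to (ii'). Fix $E = \acl(E) \subseteq M$ and tuples $a, b, c \in M$ with $b \qfindep_E a$, $c \qfindep_E ab$, and $a \equiv_{\cL(E)} b$. Choose $\sigma \in \aut(M/E)$ with $\sigma(a) = b$. The idea is to apply (ii') to the data $a_1 := b$, $a_2 := a$, $c_1 := \sigma(c)$, $c_2 := c$, and the fixed $c$.

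The key preliminary observation will be that $ac \qfequiv_{\cL(E)} bc$: letting $p$ be an $\aut(M/E)$-invariant quantifier free type witnessing $c \qfindep_E ab$, for any quantifier free $\cL$-formula $\phi(x, y)$, $\models \phi(a, c)$ iff $\phi(a, x) \in p$, which by the $\aut(M/E)$-invariance of $p$ is equivalent to $\phi(b, x) \in p$, i.e.\ $\models \phi(b, c)$.

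Once this is in hand, the four hypotheses of (ii') are straightforward: $a_1 \qfindep_E a_2$ is exactly $b \qfindep_E a$; $c \qfindep_E a_1 a_2$ coincides with $c \qfindep_E ab$; $c_1 \equiv_{\cL(E)} c_2$ follows from $\sigma \in \aut(M/E)$; the matching $a_1 c_1 \qfequiv_{\cL(E)} a_1 c$ unfolds to $b\sigma(c) = \sigma(ac) \equiv_{\cL(E)} ac \qfequiv_{\cL(E)} bc$, where the last step invokes the preliminary observation, and $a_2 c_2 \qfequiv_{\cL(E)} a_2 c$ is trivial. Applying (ii') then produces $c^\star$ with $ac = a_2 c_2 \equiv_{\cL(E)} ac^\star$ and $b\sigma(c) = a_1 c_1 \equiv_{\cL(E)} bc^\star$; combining the latter with $b\sigma(c) = \sigma(ac) \equiv_{\cL(E)} ac$ gives $bc^\star \equiv_{\cL(E)} ac \equiv_{\cL(E)} ac^\star$, as required.

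The main point of the argument is the preliminary observation, which bridges the gap between invariance of a quantifier free type under automorphisms (a condition about $c$) and the actual equality of quantifier free types of $ac$ and $bc$ over $E$ (the condition on the $a$-side of (ii')); once this is secured, the only further move is twisting $c$ by $\sigma$ on the $a_1 = b$ side, after which everything slots into place.
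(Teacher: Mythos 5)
Your proof is correct and essentially the same as the paper's: the paper picks an abstract $d$ with $ac\equiv_{\cL(E)}bd$ and applies (ii') with the data $(b,a,d,c,c)$, whereas you take the concrete choice $d=\sigma(c)$; both hinge on the same key observation that the $\aut(M/E)$-invariance of the quantifier free type witnessing $c\qfindep_E ab$ yields $a\qfequiv[\cL(Ec)]b$.
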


\begin{proof}
  Let $E$, $a$, $b$ and $c$ be as in Hypothesis\,\ref{H:amalg}. Since
  $a\equiv_{\cL(A)} b$, we can find $d$ such that $ac\equiv_{\cL(E)}
  bd$. Then we do have $a \qfindep b$, $c\qfindep_E ab$ and
  $c\equiv_{\LL(E)} d$. Moreover, since $a$ and $b$ are
  $\aut(M/E)$-conjugated and there exists an $\aut(M/E)$-invariant
  quantifier free type $p$, it follows that $a\qfequiv[\cL(Ec)] b$ and
  hence $bd \qfequiv[\cL(E)] bc$. It follows that we can apply (ii')
  to find a $c^\star$ that $ac \equiv_{\LL(E)} ac^\star$ and $bd
  \equiv_{\LL(E)} bc^\star$. Since $bd \equiv_{\cL(E)} ac$, we have
  the required conclusion.
\end{proof}

\begin{remark}[QE crit]
Note that if $T$ eliminates quantifiers (or if we are working in the Morleyized language), (ii') and therefore (ii), holds trivially for $\qfindep$. In that case, \ref{crit wEI} reduces to the statement that if every set \(X\) definable in some model of \(T\) contains a type which is \(\Real(\acleq(\code{X}))\)-invariant, then \(T\) weakly eliminates imaginaries. A particular case of that statement is Hrushovski's criterion via definable types \cite[Lemma\,1.17]{Hru-EIACVF} where invariant is replaced by definable --- and the coding of definable types is made explicit.

One important difference of working with definable types, though, is that, as pointed out in \ref{unary}, building types on some \(X\) which are definable almost over \(\code{X}\), without controlling in which sorts the canonical basis lies, can be done just for unary dominant definable sets. The general case follows by an easy induction. Also, contrary to definable types, invariant types do not have an imaginary canonical basis and thus it is much harder to compute their canonical basis, unless it is obvious from the construction of the invariant type, as in \ref{loc dens}.
\end{remark}


\section{\texorpdfstring{Pseudo $p$-adically closed fields}{Pseudo p-adically closed fields}}
\label{S:PpC}
\subsection{Preliminaries}
\label{s:PpC prelim}

In this section we will give all the preliminaries on valued fields,
$p$-adically closed fields and pseudo $p$-adically fields that
are required throughout the rest of the paper.

\begin{notation}
 Whenever $F$ is a field, we denote by $\alg{F}$ its algebraic
  closure.
\end{notation}


Let us start by fixing our notations regarding valued fields and introduce the geometric language.

\begin{definition}
A valuation on a field $F$ is a group morphism map $\val: \inv{F} \rightarrow \Gamma$, where $(\Gamma, +,0, <)$ is a totally ordered abelian group, such that for all $a, b \in F$, $\val(a+b)\geq \min\{\val(a), \val(b)\}$. The set $\Val:= \{x \in F: \val(x)\geq 0\}$ is a subring of $F$ that
is called the valuation ring.  The set $\Mid:= \{x \in F: \val(x)>
0\}$ is the unique maximal ideal of $\Val$. The field $\res := \Val/
\Mid$ is called the residue field of $F$.
\end{definition}

\begin{definition}(Geometric language)
Let $(F,\val)$ be a valued field. For all $m\in\Zz_{>0}$, We define \[\Latt{m}(F) :=
      \GL{m}(F)/\GL{m}(\Val)\text{ and }\Tor_m(F) :=
      \GL{m}(F)/\GL{m, m}(\Val),\] where $\GL{m, m}(\Val)$ is the group
      of matrices $M\in \GL{m}(\Val)$ whose last column reduces modulo
      $\Mid$ to a column of zero except for a $1$ on the diagonal.

The \emph{geometric language} $\LG$ consists of
      the sort $\K$ equipped with the ring language, sorts $\Latt{m}$
      and $\Tor_m$, for all $m \in \Zz_{>0}$, maps $s_m: \K^{m^2}
      \rightarrow \Latt{m}$ and $t_m: \K^{m^2}\rightarrow \Tor_m$
      interpreted as the canonical projections and the necessary predicates to have quantifier elimination in the \(\LG\)-theory \(\ACVFG\) of algebraically closed valued fields.
\end{definition}

\begin{remark}[rem G]
  \begin{enumerate}[label=(\arabic*),ref=(\arabic*)]
  \item The sort $\Latt{m}$ is the moduli space of all rank $m$ free $\Val$-submodules of $\K^m$, i.e. $\Val$-lattices. For all \(s\in\Latt{m}\), let \(\latt(s)\) to be the lattice whose set of bases is \(s\).
  \item We can identify $\Latt{1} = \K/\inv{\Val}$ with $\val(\inv{\K})$
    and $s_1$ with the valuation map.
  \item For all \(s\in \Latt{m}\), the fiber of the natural map $\tau_m : \Tor_m \to \Latt{m}$ above \(s\) can be identified (once we add a zero) with the dimension \(m\) $\res$-vector space \(\latt(s)/\Mid\latt(s)\).
  \item\label{Tn non nec} When the valuation $\val$ is discrete, i.e. there are elements $\theta$ with minimal positive valuation, there is an $\emptyset$-definable
    map from $\Tor_m$ to $\Latt{m+1}$. It follows that, in that case,
    the sorts $\Tor_m$ are not necessary to obtain elimination of
    imaginaries.
  \end{enumerate}
\end{remark}

The geometric sorts where introduced by Haskell, Hrushovski and Macpherson to prove:

\begin{theorem}({\cite{HasHruMac-ACVF}})
  The theory $\ACVFG$ of algebraically closed valued fields in the
  geometric language eliminates imaginaries.
\end{theorem}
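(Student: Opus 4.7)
This is a deep cited result, so I only outline a strategy, which one might carry out using the criterion \ref{crit wEI}. I would apply the criterion to quantifier free invariant independence. Since \(\ACVFG\) eliminates quantifiers (by construction of the extra predicates in \(\LG\)), Hypothesis\,\ref{H:amalg} of \ref{crit wEI} holds trivially by \ref{QE crit}, and it remains to verify Hypothesis\,(i): for every \(E = \acleq(E) \subseteq \eq{M}\), every tuple \(a\) from \(M\), and every small \(C \subseteq M\), I need to exhibit \(a^{\star} \equiv_{\LG(E)} a\) realizing a global quantifier free type invariant over the geometric part \(\Real(E)\) of \(E\).

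The plan is to produce, for every \(\LG(E)\)-definable set \(X\), a global \(\LG\)-definable type concentrating on \(X\) whose canonical basis lies in \(\LG\) rather than in \(\eq{\LG}\). Since \(\ACVFG\) is \(C\)-minimal, every unary definable \(X \subseteq \K\) is a finite Boolean combination of Swiss cheeses, so it contains the generic type of an outer ball, which is definable over the code of the ball --- a point of \(\K \cup \Latt{1}\). For higher arities, I would induct on the number of field coordinates: project to the first \(n\) coordinates, pick a good definable type there, and extend to a definable type in the remaining coordinate, tracking carefully where the new piece of the canonical basis lives.

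The heart of the argument, and the main obstacle, is precisely this tracking, and it is here that the geometric sorts were designed to be exactly sufficient. Codes for definable \(\Val\)-submodules of \(\K^m\) are, by construction, points of \(\Latt{m}\); and when one extends a definable type across a closed ball, one picks up additional data modulo the maximal ideal, which is encoded in \(\Tor_m\). The delicate combinatorial work is to show that these are the only sources of code that arise, so that every canonical basis lies in the union of \(\K\) with the sorts \(\Latt{m}\) and \(\Tor_m\). Granting this, Hypothesis\,(i) of \ref{crit wEI} holds and weak elimination of imaginaries follows. Upgrading to full elimination of imaginaries then requires coding finite sets of geometric points, which I would do by lifting finite tuples of geometric points to generically stable types on the field sort and reading the code off the canonical basis of such a type --- the Johnson-style approach alluded to in the introduction.
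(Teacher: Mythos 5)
The paper does not prove this theorem; it is quoted as a known result from \cite{HasHruMac-ACVF} on which the rest of Section~\ref{S:PpC} relies, so there is no internal proof to compare against. Your sketch correctly identifies the modern route --- essentially Hrushovski's re-proof in \cite{Hru-EIACVF}, which, as remark \ref{QE crit} observes, is a special case of \ref{crit wEI} once one has quantifier elimination: produce, for each definable set $X$, a definable type in $X$ whose canonical basis lies in the geometric part of $\acleq(\code{X})$, then code finite sets. You are also right that the heart of the matter --- showing that every such canonical basis, and all the quotient data accumulated when inducting on the number of field coordinates (germs, lattices, and their reductions modulo $\Mid$), really do land in $\K$, $\Latt{m}$ and $\Tor_m$ and nowhere else --- is the entire technical content of \cite{HasHruMac-ACVF}. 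Since that is precisely the part you leave to the reference, what you have written is an honest outline rather than a proof, which is the appropriate thing to say for a cited theorem.

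Two points are worth correcting in the sketch. First, the code of a ball is not in general a point of $\K\cup\Latt{1}$: a closed ball $a+c\Val$ is encoded by a rank-$2$ lattice, hence lives in $\Latt{2}$, and codes of open balls involve $\Tor_{2}$; only the radius itself sits in $\Latt{1}$. So even the unary base case already exercises the higher geometric sorts, and this is one reason the sort $\Tor_m$ cannot be dropped in the mixed-characteristic or equicharacteristic-zero case. Second, one must resist the temptation to invoke Corollary~\ref{dens def Cmin} in this argument: that corollary takes weak elimination of imaginaries in the $C$-minimal theory as a \emph{hypothesis}, so using it to establish elimination of imaginaries for $\ACVFG$ would be circular. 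Your proposal does not make that mistake, but given the surrounding material it is a natural misstep to warn against.
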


Let us now discuss \(p\)-adically closed fields.

\begin{definition}
  Let $(F,\val)$ be a valued field.
  \begin{enumerate}
  \item The valuation $\val$ is called $p$-adic if the residue field
    is $\Ff_p$ and $\val(p)$ is the smallest positive element of the
    value group $\val(\inv{F})$.
  \item We say that $(F,\val)$ is $p$-adically closed if $(F,\val)$ is
    $p$-adically valued and it has no proper $p$-adically valued
    algebraic extension.
  \item A $p$-adic closure of $(F,\val)$ is an algebraic extension
    $(L,\val)$, which is $p$-adically closed. It
    always exists when $(F,\val)$ is $p$-adically valued.
  \end{enumerate}
\end{definition}

\begin{fact}(Properties of the theory of $p$-adically closed fields)
  \begin{enumerate}[label=(\roman*)]
  \item The class $\pCF$ of $p$-adically closed fields is elementary
    in the language $\Lrg := \{+,-, \cdot, 0,1\}$ of rings.
  \item Let $\LP$ be the language of rings to which we add a binary
    predicate $\Div$ and, for all $m\in\Zz_{>0}$, unary predicates
    $\{\Pow{m}: m > 1\}$. We interpret $x\Div y$ as $\val(x)\leq
    \val(y)$ and $P_m$ as set of non-zero $m$-th powers. By \cite[Theorem\,1]{Mac-Qp}, $\pCF$ eliminates quantifiers in $\LP$.
  \end{enumerate}
\end{fact}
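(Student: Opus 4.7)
For part (i), the plan is to exhibit an explicit $\Lrg$-definition of the valuation ring in every $p$-adically closed field and then axiomatize $\pCF$ in the pure ring language. For odd $p$, the standard definition is ``$1+px^2$ is a square'': Hensel's lemma applied to $Y^2-(1+px^2)$ shows this is equivalent to $\val(x)\geq 0$, and a similar trick using higher-power polynomials handles $p=2$. Once $\Val$ is $\Lrg$-definable by some formula $\theta(x)$, $\pCF$ is axiomatized in $\Lrg$ by: characteristic $0$; $\theta$ defines a valuation ring; the residue field has exactly $p$ elements; $p$ has minimal positive valuation; the value group is a $\Zz$-group (axioms of divisibility modulo $\val(p)$); and Henselianity (an axiom scheme asserting that polynomials satisfying Hensel's hypothesis have a root). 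All of these are expressible as first-order axiom schemes in $\Lrg$.

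For part (ii), I would follow Macintyre's strategy for quantifier elimination. By induction on formula complexity, it suffices to eliminate a single existential quantifier from a conjunction $\phi(x,y)$ of atomic $\LP$-formulas. After normalizing $\phi$ as a conjunction of polynomial equalities, $\Div$-comparisons, and $\Pow{m}$-assertions in $y$ over parameters $x$, the task is to express the solvability in $y$ by a quantifier-free $\LP$-formula on the coefficients. The predicates $\Pow{m}$ were introduced precisely to make this possible: together with Hensel's lemma they detect solvability of $Y^m = a$, and a Newton-polygon analysis combined with iterated Hensel lifting translates solvability of higher-degree polynomials into statements about valuations and $m$-th power status of the coefficients.

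The main obstacle lies in (ii): the inductive step reducing a polynomial of degree $n$ in $y$ to a lower-degree problem requires careful bookkeeping of Newton polygons and Hensel's hypothesis, and is the technical heart of Macintyre's proof --- the reason this result is cited here rather than reproved. For (i), the principal point is verifying uniformity of the candidate $\Lrg$-definition of $\Val$ across all $p$-adically closed fields; once that is done the axiomatization is essentially routine.
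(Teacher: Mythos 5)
This statement appears in the paper as a cited \emph{Fact} rather than something the paper proves: part (ii) is attributed to Macintyre, and part (i) is a standard consequence of the Ax--Kochen/Prestel--Roquette theory of formally $p$-adic fields. There is therefore no internal proof to compare against. Your sketch is an accurate account of how these results are established in the literature: for (i), the valuation ring is indeed defined in $\Lrg$ by a Henselian-lifting trick --- for odd $p$ the formula ``$1+px^2$ is a square'' works exactly as you say, and for $p=2$ one uses, e.g., ``$1+8x^2$ is a square'' (so that $\val(8x^2)>2\val(2)$ and Hensel still applies), after which the axiomatization via characteristic $0$, $\theta$ defining a valuation ring with $p$-adic value group (a $\Zz$-group with $\val(p)$ minimal), residue field $\Ff_p$, and Henselianity is correct; for (ii), your outline of Macintyre's elimination --- $P_m$ predicates detecting $m$-th powers via Hensel, plus a Newton-polygon induction on degree --- is the standard shape of the argument. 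Since the paper deliberately defers the technical heart of (ii) to Macintyre's paper, your observation that this is ``the reason this result is cited rather than reproved'' matches the authors' intent. No gaps beyond the expected incompleteness of a sketch.
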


\begin{remark}
Observe that \(x \Div y\) is quantifier-free definable in \(\LP\sminus\{\Div\}\):

\[M \models a \Div b \text{ if and only if } M \models P_\ell(a^\ell + p b^\ell),\] where $\ell\neq p$ is prime.
\end{remark}


We now define pseudo $p$-adically closed fields and recall known results. We end this section by proving a new result regarding existential closedness of $p$-adically closed fields with finitely many distinct \(p\)-adic valuations. We
refer the reader to \cite{Jar-PpC}, \cite{HarJar-PpC} and
\cite{Mon-NTP2} for more details.

\begin{definition}
Let $F\leq L$ be an extension of characteristic $0$ fields. 
\begin{enumerate}
\item $F\leq L$ is called totally $p$-adic if every $p$-adic valuation of $F$ can be extended to a $p$-adic valuation of $L$.
 \item $F\leq L$ is a regular extension if $L \cap \alg{F}=F$. 
\end{enumerate}
\end{definition}

\begin{definition}
A field $F$ of characteristic $0$ is pseudo $p$-adically closed ($\PpC$) if it is is existentially closed, as an $\Lrg$-structure, in every totally $p$-adic regular extension. 
\end{definition}

\begin{remark}
By Lemma\,13.9 of \cite{HarJar-PpC} this is equivalent to every non-empty absolutely irreducible variety $V$ defined over $F$ having an $F$-rational point, provided that it has a simple rational point in each $p$-adic closure of $F$.
 \end{remark}

\begin{fact}[factPpC]({\cite[Theorem 10.8]{Jar-PpC}})
Let $F$ be a $\PpC$ field and let $\val$ be a $p$-adic valuation on $F$. Then: 
\begin{enumerate}[label=(\roman*)]
\item The $p$-adic closure of $F$ with respect to $\val$ is exactly its Henselianization. In particular all $p$-adic closures of $F$ with respect to $\val$ are $F$-isomorphic.
\item $F$ is dense, for the $\val$-topology, in its $p$-adic closure $\pcl{F}$.
\item $\val(F)$ is a $\Zz$ group.
\item If $\val_1$ and $\val_2$ are distinct $p$-adic valuations on $F$, then $\val_1$ and $\val_2$ are independent, i.e. $\val_1$ and $\val_2$ generate different topologies.
\end{enumerate}
\end{fact}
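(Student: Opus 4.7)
Items (i)--(iii) all follow once we establish the density of $F$ in its Henselianization $F^h$ at $\val$, so I would tackle density first. Given $a \in F^h$ and a precision $n \in \Zz_{>0}$, the plan is to produce $b \in F$ with $\val(b - a) > n$ by applying the $\PpC$ axiom to a carefully chosen absolutely irreducible $F$-variety $V$. Writing $a$ as a simple root of its minimal polynomial $f(X) \in F[X]$, one adds a generic $p^{n+1}$-perturbation involving auxiliary parameters $T_1, \dots, T_k$ to produce $g(X, T_1, \dots, T_k) \in F[X, T_1, \dots, T_k]$ whose vanishing locus $V$ is absolutely irreducible and, by Hensel's lemma, has a simple rational point in each $p$-adic closure of $F$ --- obtained by lifting a conjugate of $a$ to a root of $g$ lying within $\val$-distance $n$ of that conjugate. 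The $\PpC$ hypothesis then yields an $F$-rational point of $V$ whose $X$-coordinate is the desired $b$.

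Granted density, (iii) and (i) fall out together. Density gives $\val(F) = \val(F^h)$ and $\res(F^h) = \res(F) = \Ff_p$. Since $F^h$ embeds into any $p$-adic closure $\pcl{F}$, whose value group is $\Zz$ with $\val(p) = 1$, and $\val(F^h)$ contains $\val(p)$ as its least positive element, we conclude $\val(F^h) = \Zz$, hence so is $\val(F)$, proving (iii). Now $F^h$ is Henselian with residue field $\Ff_p$ and a $\Zz$-group value group, so by the Prestel-Roquette axiomatization of $\pCF$, $F^h$ is itself $p$-adically closed. Any $p$-adic closure of $(F,\val)$ contains $F^h$ by the universal property of the Henselianization, so equality holds, establishing (i) together with the uniqueness up to $F$-isomorphism.

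Item (iv) follows from rank-one considerations. Distinct $p$-adic valuations $\val_1, \val_2$ on $F$ take values in $\Zz$ with $\val_i(p) = 1$. Two $\Zz$-valued valuations induce the same topology if and only if they are proportional by a positive rational; the normalization $\val_i(p) = 1$ forces the proportionality constant to be $1$, so topologically equivalent $p$-adic valuations must be equal. Since $\val_1 \ne \val_2$, they are topologically inequivalent, which for rank-one valuations is the same as being independent.

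The main obstacle is the variety construction in the density step: enforcing genuine absolute irreducibility of $V$ (rather than merely $F$-irreducibility) while simultaneously guaranteeing a \emph{simple} rational point in \emph{every} $p$-adic closure of $F$ --- including those attached to $p$-adic valuations other than $\val$ --- is the technical heart of the argument, and is handled in \cite{Jar-PpC} by a careful Bertini-style generic choice of perturbation.
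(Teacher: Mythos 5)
This statement is cited in the paper as a \textbf{Fact}, with a reference to Jarden's Theorem 10.8; the paper gives no proof at all, so there is no ``paper's own proof'' to compare against. Your sketch aims at the right circle of ideas (density via the $\PpC$ axiom applied to a deformed minimal polynomial, then feeding density into the Prestel--Roquette characterization), but it contains a substantive error that propagates through (i), (iii) and (iv): you repeatedly identify the value group with $\Zz$ itself, whereas item (iii) only asserts it is a $\Zz$-\emph{group}, i.e.\ a model of $\Th(\Zz,+,<)$. Such a group may be non-standard, hence non-archimedean, hence of rank $>1$ as an ordered group. Concretely, the line ``$F^h$ embeds into any $p$-adic closure $\pcl{F}$, whose value group is $\Zz$ with $\val(p)=1$'' is false in general: the value group of a $p$-adic closure is a $\Zz$-group, and a subgroup of a $\Zz$-group containing the minimal positive element need not be a $\Zz$-group, so this does not yield (iii). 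Also the implication ``density gives $\val(F)=\val(F^h)$'' is a non sequitur (that equality is automatic because Henselization is an immediate extension, density or not), and density as you state it --- for integer precisions $n$ --- is weaker than density for all positive $\gamma\in\val(F)$, which is what the $\val$-topology requires when the value group is non-standard.

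Item (iv) is the clearest casualty: your argument is that two $\Zz$-valued valuations inducing the same topology must be proportional, and then normalization forces equality. That is a rank-one argument. When $\val(F)$ is a non-standard $\Zz$-group it has a proper convex subgroup (the standard integers), so the valuations in question are \emph{not} rank one and the ``topologically inequivalent $\Leftrightarrow$ independent'' dictionary you invoke does not apply. The correct proof has to rule out a common nontrivial coarsening directly, which is genuinely different from the archimedean case. Finally, the density step itself --- constructing an absolutely irreducible $F$-variety with simple points in \emph{every} $p$-adic closure of $F$, not just the one attached to $\val$ --- is, as you yourself note, the technical heart; as written it is a plan rather than a proof. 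So the overall outline is in the right spirit, but the value-group bookkeeping is wrong in a way that invalidates (iii) and (iv) as argued, and the density construction remains to be carried out.
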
 

\begin{remark}[AproThe]
Let $F$ be a $\PpC$ field and let $\val_1, \ldots, \val_n$ be different $p$-adic valuations on $F$. For all $i\leq n$, let $U_i$ be a non-empty subset of $F^r$ which is open in the topology associated to \(\val_i\). Then by \cite[Theorem 4.1]{PreZie} and \ref{factPpC} we have that $\bigcap_i U_i \not = \emptyset$.
\end{remark}
 
\begin{definition}[defnPpc] 
Let $F$ be a field, $n\geq 1$, and let $\val_1, \ldots, \val_n$ be $n$ distinct $p$-adic valuations on $F$. 
The field $(F, \val_1, \ldots, \val_n)$ is $n$-pseudo $p$-adically closed ($\PpC[n]$) if $F$ is a $\PpC$ field and $\val_1, \ldots, \val_n$ are the only $p$-adic valuations of $F$. If $U_i$ is an open (respectively closed) set for the topology associated to the valuation $\val_i$, we will say that $U_i$ is $i$-open (respectively $i$-closed). 
\end{definition}

\begin{notation}[Mac nota] 
Let $F$ be a field and for \(0<i\leq n\), let \(\val_i\) be a \(p\)-adic valuation on \(F\) and let us fix a $p$-adic closure $\pM[F]{i}$ of $F$ with respect to $\val_i$. Let $\cL^{(i)}= \Lrg \cup \{\Div_i, \Pow[i]{m} : m \in \Zz_{>0},\,1\leq i\leq n\}$. Let $\cL := \bigcup_{i=1}^n \cL^{(i)}$. We interpret \(x \Div_i y\) as \(v_i(x)\leq v_i(z)\) and $\Pow[i]{m}(x)$ as \(\pM[F]{i} \models \exists y\,y^m= x \wedge x \not = 0\).
\end{notation}

Recall that, by \ref{factPpC}, if \(F\) is \(\PpC[n]\), $\pM[F]{i}$ is unique up to isomorphism, so the \(\cL\)-structure of \(F\) does not depend on the choice of \(\pM[F]{i}\). In particular, if \(L\) is a totally \(p\)-adic extension of \(F\), the \(\cL\)-structure induced by \(L\) on \(F\) is that unique \(\cL\)-structure.  Note also that \(\Pow[i]{m}(\pM[F]{i})\) is an \(i\)-clopen subset of \(\inv{\pM[F]{i}}\).

\begin{fact}[simpointp]({\cite[Lemma 3.6]{EfrJar}})
Let $(F,\val_1, \ldots, \val_n)$ be an $\PpC[n]$ field and let $V$ be an
absolutely irreducible variety defined over $F$.  For each $1\leq
i\leq n$, let $q_i \in V(\pM[F]{i})$ be a simple point.  Then $V$
contains an $F$-rational point $q$, arbitrarily $i$-close to $q_i$
for all $i \in \{1, \ldots, n\}$.
\end{fact}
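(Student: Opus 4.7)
The plan is to combine the $p$-adic implicit function theorem at each $q_i$, the simultaneous approximation theorem \ref{AproThe} for the independent valuations on $F$, and the defining PpC property applied to an auxiliary absolutely irreducible $F$-variety.

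First, I would embed $V$ as an affine variety $V \subseteq \mathbf{A}^N$ and let $d = \dim V$. Since each $q_i$ is a simple point of $V$ on the Henselian field $\pM[F]{i}$, the $p$-adic implicit function theorem provides, for each $i$, a choice of $d$ among the affine coordinates such that the projection of $V$ onto these coordinates is étale at $q_i$. As being étale at a given point is a Zariski open condition on the parameters of a linear change of coordinates on $\mathbf{A}^N$ and $F$ is infinite, a sufficiently generic $F$-rational linear change of variables yields a single projection $\pi : V \to \mathbf{A}^d$ that is étale at every $q_i$ simultaneously.

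Second, since $F$ is dense in each $\pM[F]{i}$ by \ref{factPpC}\,(ii), the $v_i$-open balls of any prescribed radius around $\pi(q_i)$ in $F^d$ are non-empty, so \ref{AproThe} produces a point $y \in F^d$ that is $v_i$-close to $\pi(q_i)$ for every $i$. By Hensel's lemma and the étaleness of $\pi$ at $q_i$, the point $y$ lifts uniquely to a point $q'_i \in V(\pM[F]{i})$ that is $v_i$-close to $q_i$.

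The hard part is converting the tuple $(q'_i)_i$, which lies above the common $F$-rational base point $y$, into a single $F$-rational point of $V$. The fiber $W := V \cap \pi^{-1}(y)$ is a finite étale $F$-scheme containing each $q'_i$, but it is in general not absolutely irreducible, so PpC does not apply directly. I would overcome this with a Galois-theoretic descent: for a sufficiently generic $y$, one isolates an absolutely irreducible $F$-subvariety $V^{\circ} \subseteq V$ dominating the relevant $F$-component of $W$ and still carrying, by construction, simple $\pM[F]{i}$-points $v_i$-close to each $q_i$. Applying PpC to $V^{\circ}$ then produces the sought $F$-rational point, whose proximity to the $q_i$ follows directly from the étale Hensel structure near each $q_i$.
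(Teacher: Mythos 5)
This statement is a \emph{Fact} that the paper cites from \cite[Lemma 3.6]{EfrJar} rather than proving, so there is no in-paper argument to compare against; I will therefore assess your proposal on its own terms.

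The proposal has a genuine gap at its final step, and I think the gap is not merely one of detail but of principle. After the étale projection $\pi : V \to \mathbf{A}^d$ and the Hensel lift, the fiber $W = V \cap \pi^{-1}(y)$ is zero-dimensional, and the points $q'_i \in W(\pM[F]{i})$ are isolated; once the fiber is fixed, the $v_i$-distances between distinct points of $W$ are bounded below, so "$v_i$-close to $q'_i$'' within $W$ degenerates to "$= q'_i$.'' For different $i$, the $q'_i$ live in different $p$-adic closures and there is no reason for them to be Galois-conjugate over $F$, let alone to coincide with a single $F$-rational point. Consequently the promised absolutely irreducible $F$-subvariety $V^\circ \subseteq V$ "dominating the relevant $F$-component of $W$'' is not constructed, and it is unclear what it could be: a $V^\circ$ satisfying both (a) absolute irreducibility over $F$ and (b) simple $\pM[F]{i}$-points near each $q_i$ is exactly what the original statement is about, so invoking it at the end is circular, while the claim that proximity to the $q_i$ "follows directly from the étale Hensel structure'' is not backed by any argument (the basic PpC property yields an $F$-point of an absolutely irreducible variety, but with no metric control). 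The standard route in the literature is quite different: one works with the function field $K = F(V)$, which is a regular extension of $F$ because $V$ is absolutely irreducible, uses the simple points $q_i$ to extend each $v_i$ to a $p$-adic valuation $w_i$ on $K$ whose restriction to $F$-affine coordinates places the generic point $\xi$ infinitesimally close to $q_i$, and then invokes existential closedness of $F$ in the totally $p$-adic regular extension $K$ to transfer the (existential) statement "there is a point of $V$ in the prescribed $v_i$-balls'' down to $F$, using the definability of the $\Val_i$ via $p$-th power predicates. I would encourage you to recast your argument along those lines, or at least to supply an actual construction of $V^\circ$ and an actual proximity argument, since as written the last paragraph is a restatement of the problem rather than a solution.
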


We can now deduce a strengthening of \cite[Lemma 13.9 (a)]{HarJar-PpC}. Note that in the conclusion \(F\) is existentially closed as an $\cL$-structure and not only as an \(\Lrg\)-structure.

\begin{theorem}[PpCExisClosed]
Let $(F, \val_1, \ldots \val_n)$ be $\PpC[n]$ and let $L$ be a regular totally $p$-adic extension of $F$. Then $F$ is existentially closed in $L$ as an $\cL$-structure.
\end{theorem}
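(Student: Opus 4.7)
The plan is to reduce the problem to finding rational points on a suitable absolutely irreducible variety defined over \(F\) and then to apply \ref{simpointp}. Consider an existential \(\cL(F)\)-formula \(\exists \bar y\,\phi(\bar a, \bar y)\) witnessed by some tuple \(\bar b\in L^{m}\). Putting \(\phi\) in disjunctive normal form and selecting the disjunct realized by \(\bar b\), we may assume \(\phi\) is a conjunction of atomic and negated atomic \(\cL\)-formulas. Since \(\Div_{i}\) is quantifier-free definable from the \(\Pow[i]{m}\), the atoms may be taken to be polynomial equations \(P_j(\bar a,\bar y)=0\), inequations \(Q_k(\bar a,\bar y)\neq 0\), and predicates of the form \(\Pow[i]{m_{i,l}}(R_{i,l}(\bar a,\bar y))\) or their negations.

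First, I would introduce \(V\subseteq \mathbb{A}^m_F\), the Zariski closure of \(\bar b\), i.e.\ the subvariety whose ideal is \(\{P\in F[\bar y] : P(\bar a,\bar b) = 0\}\). Since \(L/F\) is regular, the sub-extension \(F(\bar b)/F\) is also regular, so \(V\) is absolutely irreducible over \(F\). Moreover, \(\bar b\) lies over the generic point of \(V\), and because the characteristic is \(0\), \(\bar b\) is a simple point of \(V(L)\).

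Next, for each \(i\leq n\), I would fix an extension of \(\val_i\) to \(L\) and let \(\pM[L]{i}\) denote the corresponding \(p\)-adic closure. By \ref{factPpC}(i), \(\pM[F]{i}\) is the Henselianization of \(F\) for \(\val_i\) and embeds canonically into \(\pM[L]{i}\); since \(\pCF\) admits quantifier elimination in \(\LP\), this embedding is elementary. The \(\LP(F)\)-sentence asserting the existence of a simple point of \(V\) satisfying the inequations \(Q_k\neq 0\) together with all \(\val_i\)-relevant conditions \(\Pow{m_{i,l}}(R_{i,l}(\bar a,\bar y))\) (or their negations) holds in \(\pM[L]{i}\), witnessed by \(\bar b\). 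By elementarity, it holds in \(\pM[F]{i}\), providing a simple point \(\bar b_i\in V(\pM[F]{i})\) satisfying the \(i\)-part of the conditions.

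Finally, I would apply \ref{simpointp} to produce \(\bar b^{\star}\in V(F)\) simultaneously \(i\)-close to \(\bar b_i\) for every \(i\). Since \(\Pow[i]{m}\) is \(i\)-clopen in \(\pM[F]{i}\), each of the inequations and the (possibly negated) \(\Pow[i]{m_{i,l}}\)-atoms defines an \(i\)-open subset, so taking \(\bar b^{\star}\) close enough to each \(\bar b_i\) in the respective \(i\)-topology ensures \(\phi(\bar a, \bar b^{\star})\) holds in \(F\). The main technical hurdle will be the transfer step: precisely verifying that simplicity of a point on \(V\) is a Zariski-open condition expressible in \(\LP\), and that the canonical inclusion \(\pM[F]{i}\subseteq \pM[L]{i}\) can be made elementary, which uses \ref{factPpC} combined with Macintyre's quantifier elimination.
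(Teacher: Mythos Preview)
Your proposal is correct and follows essentially the same route as the paper: reduce to a conjunction, take the absolutely irreducible Zariski closure of the witness over \(F\) (using regularity), transfer the existence of a simple point satisfying the \(i\)-open conditions from \(\pM[L]{i}\) down to \(\pM[F]{i}\) via model completeness of \(\pCF\), and conclude with \ref{simpointp}. The paper's write-up differs only cosmetically: it packages the non-equational atoms directly as \(i\)-open sets \(U_i\) rather than enumerating \(\Pow[i]{m}\)-atoms, and it names the Zariski locus of the witness \(W\) separately from the equational set \(V\) coming from the formula, but the argument is the same.
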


\begin{proof}
Let $\exists x \phi$ be an existential \(\cL(F)\)-formula such that $L \models\exists x \phi$. We need to show that $F \models \exists x \phi$. We may assume that \(\phi(x)\) is of the form:

\[x \in V \wedge \bigwedge_{i \leq n} x \in U_i,\]

where $V$ is Zariski closed over $F$ and $U_i$ is an $i$-open quantifier free $\cL_i(F)$-definable subset of \(V\). Let $a \in L$ be any tuple such that $L \models a \in V \wedge \bigwedge_{i \in I} a \in U_i$.
Since $F\leq L$ is a regular extension, so is $F\leq F(a)$. By \cite[Corollary\,10.2.2]{FriJar-FA}, there is an absolutely irreducible variety $W$, defined over $F$ such that $a$ is a generic point of $W$. Note that \(W\subseteq V\).

For every \(i\), let $\pM[L]{i}$ be the \(p\)-adic closure used to define the \(\cL\)-structure of \(L\) and let \(\pM[F]{i}\subsel\pM[L]{i}\) be a \(p\)-adic closure of \(F\). Since $L \subseteq \pM[L]{i}$, \[\pM[L]{i} \models \psi_i(a):= a\text{ is a simple point of } W \wedge  a \in U_i,\] and hence we find \(b_i\in \pM[F]{i}\) such that \(\pM[F]{i}\models\psi_i(b_i)\). By \ref{simpointp} there is $b \in W(F)$ such that $b$ is arbitrarily $i$-close to $b_i$, for all $i$. In particular, we can choose $b \in \bigcap_i U_i$. Then we have $F \models \phi(b)$.
\end{proof}

\begin{definition}
A field $F$ is bounded if for any $n\in\Zz_{>0}$, $F$ has finitely many algebraic extensions of degree $n$.
\end{definition}

Note that, by \cite[Lemma\,6.1]{Mon-NTP2}, any bounded \(\PpC\) field is \(\PpC[n]\) for some \(n\in\Zz_{\geq 0}\).

\begin{remark}
In Theorem 7.1 of \cite{Mon-NTP2},  the first author showed that if $n\geq 2$ and $F$ is a bounded $\PpC[n]$ field, then $\Th[\Lrg]{F}$ is not $\NIP$. An important ingredient of the proof was that the \(p\)-adic valuations of a bounded \(PpC\)-field are definable --- with parameters --- in the language of rings. As a consequence of \ref{PpCExisClosed}, this result can be generalized to arbitrary \(\PpC[n]\) fields. The proof is exactly as in \cite[Theorem 7.1]{Mon-NTP2}. \ref{PpCExisClosed} allows us to work in \(\cL\) without using boundedness to get that it is a definable expansion of \(\Lrg\).
\end{remark}

\subsection{Notations}\label{s:not}

In this section, we describe the language in which we prove elimination of imaginaries for bounded \(\PpC\) fields. Let us first fix some notation for the rest of the paper.

\begin{notation}[main not]
Fix a bounded \(\PpC\) field $F$ and let $\{\val_1, \ldots, \val_n\}$ be its \(p\)-adic valuations. For each $i \in \{1,
  \ldots, n\}$ let $\Val_i$ denote the valuation ring associated with
  $\val_i$ and let $\Val := \bigcap_i\Val_i$. We denote by $\Mid_i$ the
  maximal ideal of $\Val_i$.
  
  Let $F_0\subsel F$ be a countable elementary substructure (in the language of rings).
  Let \(\bcL_i\) be a copy of \(\LG(F_0)\) sharing the field sort \(\K\)
  and constant symbols for the elements of $F_0$. 
  Let \(\Geom_i\) denote the sorts of \(\bcL_i\). For all $m \in
  \Zz_{>0}$, we denote by \(\Latt[i]{m}\) the sort interpreted as
  \(\GL{m}(\K)/\GL{m}(\Val_i)\) and $\Tor_m^i$ the sort interpreted by
  $\GL{m}(\K)/\GL{m,m}(\Val_i)$. Let $\ACVFG_i$ denote the copy
    of $\ACVF$ in $\bcL_i$. Let \(\cL_i\) denote a definable
    enrichment of $\bcL_i$ in which \(\pCFG_i\), the \(\cL_i\)-theory
    of \(p\)-adically closed fields, eliminates quantifiers. Let
  \(\cL := \bigcup_{i\leq n}\cL_i\) and \(T := \Th[\cL]{F}\), where
  the \(\cL_i\)-structure of \(F\) is induced by its \(p\)-adic closure for \(\val_i\).
\end{notation}

\begin{remark} Observe that $\alg{F} = \alg{F_0}F$ and hence,
    for all \(M\models T\), \(\alg{\K(M)} = \alg{F_0}\K(M)\). So
    \(\K(M)\) is bounded. Moreover the extension \(F_0\leq \K(M)\) is
    regular.
\end{remark}

We now check that we have not added any new structure to \(F\):

  \begin{proposition}
 In models of \(T\), every \(\cL\)-definable set is \(\Lrg(F_0)\)-interpretable.
  \end{proposition}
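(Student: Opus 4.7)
The plan is to unwind the layered construction of $\cL$ and reduce the claim to the $\Lrg(F_0)$-definability of the valuation rings $\Val_i$.

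Recall that $\cL$ is built in layers: for each $i \leq n$, the geometric language $\bcL_i = \LG(F_0)$ for $\val_i$; then $\cL_i$, a definable enrichment of $\bcL_i$; and finally $\cL = \bigcup_i \cL_i$. Since a definable enrichment only introduces symbols for relations that are already definable, every $\cL_i$-definable set is $\bcL_i$-definable. Moreover, the sorts and atomic symbols of $\cL$ are precisely the union of those of the $\bcL_i$. It therefore suffices to verify that every sort and every atomic relation of each $\bcL_i$ is $\Lrg(F_0)$-interpretable: any $\cL$-definable set, built from these by boolean combinations and quantification, will then be $\Lrg(F_0)$-interpretable as well.

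Within $\bcL_i$, the only non-trivial structure beyond the ring language on $\K$ is encoded in $\Val_i$: the geometric sorts $\Latt[i]{m}$ and $\Tor^i_m$ are by definition the quotients $\GL_m(\K)/\GL_m(\Val_i)$ and $\GL_m(\K)/\GL_{m,m}(\Val_i)$; the maps $s_m, t_m$ are the associated canonical projections; and the auxiliary predicates adjoined to $\LG$ to yield quantifier elimination in $\ACVFG_i$ are $\emptyset$-definable in the underlying valued field. Hence every symbol of $\bcL_i$ becomes $\Lrg$-interpretable as soon as $\Val_i$ is $\Lrg$-definable on the field sort.

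The crux is therefore to establish that each $\Val_i$ is $\Lrg(F_0)$-definable on $F$, which by elementarity of $T$ then transfers uniformly to every $M \models T$. This is precisely the content of the strengthened form of \cite[Theorem~7.1]{Mon-NTP2} recorded in the remark immediately preceding \ref{main not}: using \ref{PpCExisClosed} to bypass the boundedness hypothesis appearing in the original argument, the $p$-adic valuations of any $\PpC[n]$ field are $\Lrg$-definable with parameters. A standard elementarity argument using that $F_0 \subsel F$ is an elementary substructure in $\Lrg$ allows us to pull the defining parameters into $F_0$, completing the proof. The main obstacle is this definability of the $p$-adic valuations, which however is already settled by the cited results; everything else is a routine unwinding of the definitions of the geometric language and of definable enrichment.
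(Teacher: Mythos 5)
Your reduction to the $\Lrg(F_0)$-definability of $\Val_i$ is the right target, and appealing to the remark following \ref{PpCExisClosed} (that the $p$-adic valuations of a $\PpC[n]$ field are $\Lrg$-definable with parameters) is a legitimate shortcut where the paper instead re-derives this from scratch via the Claim inside its own proof; that route has a collateral benefit you lose, namely that the Claim is reused verbatim in \ref{desc types}. But the ``routine unwinding'' part of your argument has a real gap. By Notation \ref{main not}, the $\cL_i$-structure on $M$ is the one \emph{induced by} the $p$-adic closure $\pM{i}$, not one computed intrinsically in $M$. The definable-enrichment fact gives, for each symbol of $\cL_i\sminus\bcL_i$, a $\bcL_i$-formula provably equivalent to it \emph{in models of} $\pCFG_i$; but that translation will in general have quantifiers over $\K$ ranging over $\K(\pM{i})\supset\K(M)$, and equality of the two sets in $\pM{i}$ does not give equality of their traces on $M$. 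The same issue recurs when you pass from $\bcL_i$-formulas (whose atomics on $M$ are themselves traces from $\aM{i}$) to $\Lrg(F_0)$-formulas. So the sentence ``every $\cL_i$-definable set is $\bcL_i$-definable,'' read as a statement about $M$, is not justified as written, and ``interpretation extends to all formulas'' likewise presupposes what must be checked.

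This is exactly where the paper's proof spends its effort: it produces, for each geometric $\bcL_i$-formula $\phi(x)$, a quantifier-free $\Lrg\cup\{\Div_i\}$-formula $\psi(y)$ so that $\phi(x)$ is equivalent in $\ACVFG_i$ to both $\forall y\,(f(y)=x\impform\psi(y))$ and $\exists y\,(f(y)=x\wedge\psi(y))$, which pins down the trace of $\phi$ on any intermediate structure over which the projection $f$ is still surjective; the same device is then applied to $\cL_i$-formulas. Without that (or a $p$-adic substitute such as a Hensel/density argument showing each Macintyre predicate $\Pow[i]{m}$ is quantifier-freely $\Lrg\cup\{\Div_i\}$-definable on $M$ itself), your reduction is incomplete. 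A secondary soft spot: ``pulling parameters into $F_0$ by elementarity'' is not automatic; one has to rule out an $\Lrg(F_0)$-automorphism of $M$ permuting the $\Val_i$, which requires observing that the $\emptyset$-definable equivalence on parameter tuples witnessing the $n$ valuation rings already has all of its classes represented in $F_0\subsel F$.
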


\begin{proof}
  Let us first prove that \(\LP\) is definable in
  \(\Lrg(F_0)\):

\begin{claim}[Lrg to L]
  Let \(M,N\models T\), \(A = \alg{\K(A)}\cap M\subseteq M\),
  \(B = \alg{\K(B)}\cap N\subseteq N\) and \(f:A\to B\) be an
  \(\Lrg(F_0)\)-isomorphism. Then \(f\) is an \(\cL\)-isomorphism.
\end{claim}
  
\begin{proof}
By quantifier elimination in \(\LP\) and the fact that
\(\Div_i\) can be defined without quantifiers using the predicates
\(\Pow[i]{m}\) of \ref{Mac nota}, it suffices to check that these
predicates are preserved by \(f\). 
Let \(\pM{i}\models\pCFG_i\) contain \(M\). 
Then for any \(a\in A\), \(\Pow[i]{m}(a)\) holds if and
only if there exists \(y\in \pM{i}\) such that \(y^m = a\). 
Let \(L\supstr \K(M)\) be the compositum of all degree \(m\) extensions of
\(\K(M)\) inside \(\pM{i}\). Since \(\K(M)\) is bounded, \(L\) is a
finite extension and there exists \(\alpha\in\alg{F_0}\), such that
\(L = \K(M)[\alpha]\). Then, \(\Pow[i]{m}(a)\) holds if and only if there
exists \(y\in \K(M)[\alpha]\) such that \(y^m = a\). Identifying
\(\K(M)[\alpha]\) with \(\K^l(M)\) for some \(l\), we see that the
coordinates of \(y\) are in \(\alg{(F_0a)}\cap \K(M)\subseteq
\K(A)\). It follows that \(P_{m, i}(a)\) if and only if
\(P_{m,i}(f(a))\).
\end{proof}

It follows that every \(\Val_i\) is \(\Lrg(F_0)\)-definable and hence
that the geometric sorts for the valuation \(i\) are
\(\Lrg(F_0)\)-interpretable.

Now, let \(\phi(x)\) be any \(\bcL_i\)-formula. Let \(f\) be the
canonical projection from some cartesian power of \(\K\) to the sort
of \(x\). The formula \(\psi(y) := \phi(f(y))\) is equivalent, in
\(\ACVFG_i\), to a formula in the language \(\Lrg\cup\{\Div_i\}\) and
hence \(\phi(x)\) is equivalent, in \(\ACVFG_i\) to both \(\forall y\,(f(y)
= x\impform \psi(y))\) and \(\exists y\,(f(y) = x\wedge \psi(y))\). It
follows that these two formulas also define the trace of \(\phi\) in
any model of \(\pCFG_i\). So in models of \(\pCFG_i\), every
\(\bcL_i\)-definable set is interpretable in \(\Lrg\), since \(\Div_i\) is definable in \(\Lrg\). The same
argument, but starting with a \(\cL_i\)-formula and looking at its
trace on \(M\) allows us to conclude.
\end{proof}
  
\begin{notation}[notationPrin]
For the rest of the paper, we fix $M\models T$ sufficiently saturated and homogeneous. Let $\aM{i}\models\ACVFG_i$ be the algebraic closure of $M$, along with an extension of \(\val_i\), and let \(\pM{i}\) be the $p$-adic closure of $M$ inside \(\aM{i}\). We denote by $\acl$, $\acla$, $\aclp$, $\dcl$, $\dcla$ and $\dclp$ the model theoretic algebraic and definable closures in $M$, $\aM{i}$ and $\pM{i}$ respectively.

Let $A\subseteq M$ and $a, b\in M$ be tuples. We denote by $a \equivL[A] b$, $a \equiva{A} b$ and $a \equivp{A} b$ the fact that \(a\) and \(b\) have the same type over \(A\) in \(M\), $\aM{i}$ and $\pM{i}$ respectively. We also denote by \(\tp\), \(\tpa\) and \(\tpp\) the type in \(M\), $\aM{i}$ and $\pM{i}$ respectively.
\end{notation}
    
Observe that $a \equiva{A} b$ (respectively $a \equivp{A} b$) if and only if \(a\) and $b$ have the same quantifier free \(\bcL_i\)-type (respectively \(\cL_i\)-type) in \(M\) over \(A\). Note also that \(a \qfequiv[\cL(A)] b\) if and only \(a\equivp{A} b\), for all \(i\).

\subsection{Orthogonality of the geometric sorts}
\label{s:orth geom}

In this section we will prove that the $\Latt[i]{m}$ sorts are orthogonal. We also show that their structure is the pure structure induced by $\pM{i}$.

\begin{lemma}[lift gen]
Let $m \in \Zz_{>0}$, $A\subseteq\K(M)$ and, for all $i\in \{1, \ldots, n\}$ let $s_i\in\Latt[i]{m}(M)$. Assume $M$ is $\card{A}^+$-saturated, then there exists $c\in\bigcap_i s_i(M)$ such that $\trdeg{c/A} = m^2$.
\end{lemma}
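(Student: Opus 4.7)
The plan is to realize, by the $\card{A}^+$-saturation of $M$, the partial type
\[\Pi(x) := \bigcup_{i=1}^n \{x \in s_i\} \,\cup\, \{f(x) \neq 0 : f \in A[X_1,\ldots,X_{m^2}] \setminus \{0\}\},\]
where "$x \in s_i$" abbreviates the $\cL$-formula expressing that the matrix $x \in \K^{m^2}$ is a basis of the lattice $s_i$. Any realization $c$ lies in $\bigcap_i s_i(M)$, and the second family of formulas forces the coordinates of $c$ to be algebraically independent over $A$, yielding $\trdeg{c/A} = m^2$. So it suffices to prove that $\Pi$ is finitely satisfiable in $M$.

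First I would verify that each $s_i(M) \subseteq \K(M)^{m^2}$ is a non-empty $i$-open set. Non-emptiness is immediate since $s_i \in \Latt[i]{m}(M)$ is represented by some $g \in \GL{m}(\K(M))$, and the $i$-openness follows because $s_i$ is the coset $g \cdot \GL{m}(\Val_i)$ of the $i$-open subgroup $\GL{m}(\Val_i) \subseteq \GL{m}(\K)$. For a finite fragment of $\Pi$ involving polynomials $f_1, \ldots, f_k$, I would set $V := \{f_1 \cdots f_k = 0\} \subsetneq \K^{m^2}$, observe that $V(M)$ is $i$-closed with empty $i$-interior, conclude that each $s_i(M) \setminus V(M)$ is still non-empty and $i$-open, and apply \ref{AproThe} to produce a common element $c \in \bigcap_i (s_i(M) \setminus V(M))$ witnessing the finite fragment.

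The main technical point---and the only real obstacle---is the emptiness of the $i$-interior of $V(M)$. If $V(M)$ contained a non-empty $i$-open ball of $\K(M)^{m^2}$, then by the $i$-density of $M$ in $\pM{i}$ (\ref{factPpC}\,(ii)), the set $V(\pM{i})$ would contain a non-empty $i$-open ball of $\pM{i}^{m^2}$, forcing the non-zero polynomial $f_1 \cdots f_k$ to vanish on that ball, and hence to be identically zero, a contradiction. Everything else is standard manipulation of open sets combined with \ref{AproThe} and $\card{A}^+$-saturation.
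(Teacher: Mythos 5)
Your proof is correct and rests on the same two pillars as the paper's: the $i$-openness of the coset $s_i(M) = g\cdot\GL{m}(\Val_i)$, and the approximation theorem \ref{AproThe} combined with $\card{A}^+$-saturation. The only genuine difference is in how the transcendence condition is handled. The paper first replaces $s_i(M)$ by a product of $i$-closed balls and then reduces, by induction on the number of coordinates and by affine translation, to the one-dimensional statement that $\Val = \bigcap_i\Val_i$ contains elements transcendental over any small set (and, by compactness, to $\Val$ being infinite). You instead keep all $m^2$ coordinates at once, throw the polynomial nonvanishing conditions directly into the type, and observe that a proper Zariski-closed subset $V$ has empty $i$-interior in $\K(M)^{m^2}$. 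Your justification of that last point --- pushing a hypothetical open ball inside $V(M)$ through the $i$-density of $M$ in $\pM{i}$, to force the defining polynomial to vanish on an open ball of $\pM{i}^{m^2}$ and hence identically --- is correct; it uses \ref{factPpC}(ii), which the paper's one-dimensional reduction does not need, since there the infinitude of $\Val$ follows directly from \ref{AproThe}. Both routes are short and both are sound; yours is more uniform across coordinates at the cost of invoking density of $M$ in its $p$-adic closures.
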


\begin{proof}
For all $i$, $\inv{\Val_i}$ is $i$-open in $\K(M)$ and hence $\GL{m}(\Val_i)\subseteq \K^{m^2}$ is $i$-open and so is $s_i$. Therefore, there exists a product of $i$-closed balls $U_i$ included in $S_i$. 
So it suffices to find $c\in\bigcap_i U_i\subseteq \K^{m^2}(M)$ such that $\trdeg{c/A} = m^2$. 
This is easily seen to reduce (by induction on the dimension and translation) to showing that $\bigcap_i \Val_i = \Val$ contains transcendental elements over any small set of parameters and by compactness to showing that $\Val$ is infinite. But this is an immediate consequence of \ref{AproThe}.
\end{proof}

\begin{proposition}[orth geom]
Let $A\subseteq \K(M)$ and, for all $i\leq n$, let $s_i$ and $s_i'\in\Latt[i]{m}(M)$. 
If $s_i\equivp{A} s_i'$, for all \(i\), then $(s_1,\ldots,s_n)\equivL[A](s_1',\ldots,s_n')$.
\end{proposition}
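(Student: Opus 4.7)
The plan is to reduce the orthogonality of the geometric sorts to an orthogonality statement for field tuples via a generic lift, and then glue the information using the $\PpC[n]$-amalgamation tools together with \ref{Lrg to L}.

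First, by \ref{lift gen}, pick a common generic lift $c\in\bigcap_i s_i(M)\subseteq\K^{m^2}(M)$ with $\trdeg{c/A}=m^2$. Since the projections $\K^{m^2}\to\Latt[i]{m}$ are $\emptyset$-definable, it suffices to produce some $c^*\in\K^{m^2}(M)$ satisfying $s_j(c^*)=s_j'$ for every $j$ and $c^*\equivL[A]c$; applying the projections then yields the conclusion. For each $i$, saturation and homogeneity of $\pM{i}$, combined with the hypothesis $s_i\equivp{A}s_i'$, provide $\tau_i\in\aut(\pM{i}/A)$ with $\tau_i(s_i)=s_i'$; setting $c_i:=\tau_i(c)\in\K^{m^2}(\pM{i})$, we have $c\equivp{A}c_i$ and $c_i$ is a basis of $s_i'$ with $\trdeg{c_i/A}=m^2$.

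The core step is then to realize $\bigcup_i\tp_{\cL_i}(c_i/A)$ jointly with the open conditions $s_j(x)=s_j'$ by a single tuple $c^*\in\K^{m^2}(M)$. By saturation of $M$, finite satisfiability is enough: a finite fragment, after quantifier elimination in each $\pCFG_i$, becomes a conjunction of $\cL_i(A)$-definable conditions on $x$, each with nonempty $i$-interior at $c_i$ (using that $c_i$ has maximal transcendence degree over $A$). The approximation theorem \ref{AproThe}, together with density of $M$ in $\pM{i}$ from \ref{factPpC} and the $\cL$-valued existential closedness \ref{PpCExisClosed}, yields a common solution in $M$.

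Given such $c^*$, the ring identification $c\mapsto c^*$ extends to an $\Lrg$-isomorphism between the field-algebraic closures of $F_0Ac$ and $F_0Ac^*$ inside $M$, the matching of algebraic data being guaranteed by the equality of the $\cL_i$-types. By \ref{Lrg to L}, this is then an $\cL$-isomorphism, and by saturation of $M$ it extends to an $\cL$-automorphism of $M$ fixing $A$, witnessing $c^*\equivL[A]c$. The hardest part is the joint realization step: each $\cL_i$-type is clearly realized in its own $\pM{i}$, but combining them into a single field tuple of $M$ forces the $n$ $p$-adic topologies to cooperate, which is exactly where the $\PpC[n]$-specific density, approximation, and $\cL$-existential closedness tools must interact.
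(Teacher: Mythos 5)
Your proof parallels the paper's through the generic lift $c$ via \ref{lift gen} and the observation that, for each $i$, the quantifier-free $\cL_i(A)$-type of $c$ can be realized together with $s_i'$, using that the relevant sets are $i$-open and $M$ is dense in $\pM{i}$. (You take $c_i$ in $\pM{i}$; the paper gets $c_i$ directly in $M$, but the difference is immaterial, and the joint realization of $\bigcup_i\tpp(c_i/A)\cup\{x\in\bigcap_i s_i'\}$ in $M$ via \ref{AproThe} is essentially correct --- though \ref{PpCExisClosed} is not needed here.) So you do obtain $c^*\in\K^{m^2}(M)$ with $c^*\qfequiv[\cL(A)] c$ and $c^*\in\bigcap_i s_i'$. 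The gap lies in the final paragraph, where you upgrade equality of quantifier-free $\cL(A)$-types to equality of full $\cL(A)$-types.

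You assert that "the matching of algebraic data" is "guaranteed by the equality of the $\cL_i$-types," producing an $\Lrg(F_0)$-isomorphism between the relative algebraic closures $\alg{F_0A(c)}\cap M$ and $\alg{F_0A(c^*)}\cap M$ extending $c\mapsto c^*$. This is not justified: equality of $\cL_i$-types in $\pM{i}$ gives an $\cL_i$-elementary map $\aclp(Ac)\to\aclp(Ac^*)$ inside $\pM{i}$, but nothing forces this map to carry the subfield $\K(M)$ into itself, so it need not restrict to an isomorphism of relative algebraic closures \emph{inside $M$}; a priori one only controls what happens in each $p$-adic closure separately, not in $M$ itself. Moreover, even granting such an isomorphism, \ref{Lrg to L} only upgrades it to an $\cL$-isomorphism of substructures, and extending this to an $\cL$-automorphism of $M$ via saturation requires knowing the map is \emph{elementary} --- that is \ref{type PpC} (equivalently, the back-and-forth in \ref{forth}), which you do not invoke. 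This passage from $\qfequiv[\cL(A)]$ to $\equivL[A]$ is precisely the content of the lemma the paper cites at this point, namely \cite[Lemma\,6.12]{Mon-NTP2}, which asserts that once each $\cL_i$-type of $c$ over $A$ is realized in $s_i'(M)$, the partial type $\tp(c/A)\cup\{x\in\bigcap_i s_i'\}$ is consistent in $M$. What you describe as "the hardest part" of your argument is exactly where this content is hidden, and your sketch does not reproduce it.
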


\begin{proof}
By Lemma\,\ref{lem:lift gen}, there exists $c\in\bigcap_i s_i(M)$ such that $\trdeg{c/A} = m^2 = \card{c}$.

\begin{claim}
There exists $c_i\in s_i'(M)$ such that $c_i\equivp{A} c$.
\end{claim}

\begin{proof}
By compactness, we have to show that for all (quantifier free) $\cL_i(A)$-formula $\phi(x)$ such that $\pM{i} \models \phi(c)$, $s_i'\cap \phi(M) \neq \emptyset$. Note that $s_i\cap \phi(\pM{i})$ is an $\cL_i(A)$-definable subset of $(\K(\pM{i}))^{m^2}$ that contains an element of transcendence degree $m^2$ over $A$, so it is $i$-open. As $s_i\equivp{A} s_i'$, $s_i'\cap \phi(\pM{i})$ is also $i$-open and non-empty. By the density of $M$ in $\pM{i}$, this set has a point in $M$.
\end{proof}

By \cite[Lemma\,6.12]{Mon-NTP2}, $\tp(c/A)\cup\{x\in\bigcap_i s_i'\}$ is consistent in $M$. Let $c'$ realize this type. Then there exists an $\cL(A)$-automorphism of $M$ sending $c$ to $c'$ and hence $s_i = c\cdot\GL{m}(\Val_i)$ to $s_i'= c'\cdot\GL{m}(\Val_i)$.
\end{proof}


\begin{corollary}[orth geom def]
Let $A\subseteq \K(M)$ and $X\subseteq\prod_i\Latt[i]{m_i}$ be $\LL(A)$-definable. Then there
exists finitely many quantifier free $\cL_i(A)$-formulas
$\phi_{i, j}(x_i)$ such that $X = \bigcup_j \prod_i\phi_{i, j}(M)$.
\end{corollary}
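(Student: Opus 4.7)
The plan is to combine Proposition \ref{orth geom} with quantifier elimination of \(\pCFG_i\) and a compactness argument on the Stone space over \(A\). Essentially all the content sits in \ref{orth geom}, and the corollary is a routine translation at the level of definable sets; the only subtle point is to recognize that \(\equivp{A}\) coincides with equality of quantifier free \(\cL_i(A)\)-types, which is immediate from quantifier elimination in \(\pCFG_i\).

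First, I will observe that for any tuple \((s_1,\ldots,s_n) \in \prod_i \Latt[i]{m_i}(M)\), the full \(\cL(A)\)-type of the tuple is determined by the quantifier free \(\cL_i(A)\)-types of its individual coordinates. Indeed, if \((s_1',\ldots,s_n')\) shares each coordinate's quantifier free \(\cL_i(A)\)-type, then by quantifier elimination in \(\pCFG_i\) we get \(s_i \equivp{A} s_i'\) for each \(i\), and \ref{orth geom} yields \((s_1,\ldots,s_n) \equivL[A] (s_1',\ldots,s_n')\).

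Next, I fix any \((s_1,\ldots,s_n) \in X\) and let \(p_i\) denote the quantifier free \(\cL_i(A)\)-type of \(s_i\). By the previous step, the partial type \(\bigcup_i p_i(x_i)\) determines the \(\cL(A)\)-type of the tuple over \(A\), so in particular it implies the \(\cL(A)\)-formula defining \(X\). By compactness, there exist quantifier free \(\cL_i(A)\)-formulas \(\phi_i \in p_i\) such that \(\bigwedge_i \phi_i(x_i)\) already entails \(x \in X\), giving a product \(\prod_i \phi_i(M) \subseteq X\) containing the chosen tuple.

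To conclude, the set of complete \(\cL(A)\)-types on \(\prod_i \Latt[i]{m_i}\) that concentrate on \(X\) is clopen in the Stone space over \(A\), and the previous step exhibits it as a union of basic clopens of the form \(\{q : \bigwedge_i \phi_i(x_i) \in q\}\) with the \(\phi_i\) quantifier free in \(\cL_i(A)\). Compactness of the Stone space extracts a finite subcover, which translates directly into a decomposition \(X = \bigcup_j \prod_i \phi_{i,j}(M)\) of the required form. No step is a genuine obstacle here; the only thing to be careful about is not to confuse \(\cL_i(A)\)-formulas with quantifier free ones, but this is handled automatically by the quantifier elimination of \(\pCFG_i\) in \(\cL_i\).
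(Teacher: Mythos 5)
Your proof is correct and follows essentially the same approach as the paper: both rest entirely on \ref{orth geom} to reduce to a type-level statement and then use compactness to descend to formulas. The only difference is bookkeeping — the paper runs a single compactness argument on the inconsistency of the equivalence-plus-disagreement set and then partitions the space via Boolean combinations of the resulting formulas, whereas you run compactness twice (first pointwise to produce a box inside $X$ around each tuple, then on the Stone space over $A$ to extract a finite subcover) — but the substance is identical.
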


\begin{proof}
By \ref{orth geom}, the following set is
inconsistent:
\[\{\psi(x_i)\iffform\psi(x_i') \mid 0 < i \leq n\text{ and }\psi\text{ is an }\cL_i(A)\text{-formula}\}\cup\{(x_i)_i\in X\wedge (x_i')_i\nin X\}.\]
By compactness, it follows that there are $k_i$ formulas $\psi_{i, j}(x_i)$ such that
\[M\models\forall x_1\ldots x_n\,(\bigwedge_{0\leq j < k_i}\psi_{i, j}(x_i)\iffform\psi_{i, j}(x_i'))\impform ((x_i)_{i\leq n}\in X\iffform (x_i')_{i\leq n}\in X).\]
For any formula \(\psi\), \(\psi^0\) denotes \(\neg\psi\) and \(\psi^1\) denotes \(\psi\). For every $\epsilon_i : k_i\to 2$, let $\theta_{i,\epsilon_i}(x_i) = \bigwedge_{0\leq j < k_i}\psi(x_i)^{\epsilon_i(j)}$ and for all tuple $\epsilon = (\epsilon_i)_i$, $\theta_\epsilon(x) = \theta_{i,\epsilon_i}(x_i)$. Then for all $\epsilon$, if $\theta_\epsilon(M)\cap X\neq\emptyset$, then $\theta_\epsilon(M)\subseteq X$. Let $E = \{\epsilon\mid\theta_{\epsilon}(M)\cap X\neq\emptyset\}$. Then
\[X = \bigcup_{\epsilon\in E}\theta_\epsilon(M) = \bigcup_{\epsilon\in E}\prod_i\theta_{i,\epsilon_i}(M).\]
This concludes the proof.
\end{proof}

Define \(\Geomim_i\) to be the set of all \(\cL_i\)-sorts but \(\K\).

\begin{corollary}[orth geom gen]
Let \(A\subseteq M\), \(S_i\) be a product of sorts in \(\Geomim_i\) and \(X\subseteq\prod_i S_i\) be an \(\cL(A)\)-definable subset. Then, there exists quantifier free \(\cL_i(\Geom_i(A))\)-definable sets \(X_{i, j}\subseteq S_i\) such that \(X = \bigcup_j\prod_i X_{i, j}(M)\).
\end{corollary}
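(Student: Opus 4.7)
The plan is to reduce \ref{orth geom gen} to the compactness argument of \ref{orth geom def} by establishing the following Key Lemma: for all $A \subseteq M$ and tuples $x_i, x'_i$ in products of sorts of $\Geomim_i$, if $x_i \equivp[i]{\Geom_i(A)} x'_i$ for every $i$, then $x \equivL[A] x'$. Given this, the set
\[\{\psi(x_i) \iffform \psi(x'_i) \mid 0 < i \leq n,\ \psi \text{ quantifier free } \cL_i(\Geom_i(A))\} \cup \{(x_i)_i \in X \wedge (x'_i)_i \notin X\}\]
is inconsistent, and compactness yields the decomposition $X = \bigcup_j \prod_i X_{i,j}(M)$ with $X_{i,j}$ quantifier free $\cL_i(\Geom_i(A))$-definable, exactly as in the proof of \ref{orth geom def}.

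To prove the Key Lemma I would first extend \ref{orth geom} to tuples $g_i, g'_i$ in arbitrary products of sorts of $\Geomim_i$, still assuming the base lies in $\K(M)$. Each element of $\Tor_m^i$ is a coset of $\GL{m,m}(\Val_i)$, itself a clopen subgroup of $\GL{m}(\K)$, so the fibers under the projections $t_m$ are $i$-open in $\K^{m^2}$, just like the fibers of $s_m$. Thus \ref{lift gen} generalizes to yield a common lift $c = (c_i)_i$ in the fiber product of maximal transcendence degree over $A$. Then, following \ref{orth geom}, find $d_i$ in the fiber of $g'_i$ with $d_i \equivp[i]{A} c_i$ by density of $M$ in $\pM{i}$, and amalgamate via \cite[Lemma\,6.12]{Mon-NTP2} to produce $d$ with $d \equivL[A] c$; the $\cL(A)$-automorphism sending $c$ to $d$ then sends $g$ to $g'$.

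Next, to pass from $A \subseteq \K(M)$ to $A \subseteq M$, lift each $b \in A \setminus \K(A)$ to an element $\tilde b \in \K(M)$ in the fiber of the appropriate projection $s_m$ or $t_m$. Setting $\tilde A^i := \{\tilde b : b \in \Geom_i(A) \setminus \K(A)\}$ and $\tilde A^{-i} := \{\tilde b : b \in A \setminus \Geom_i(A)\}$, one uses \ref{AproThe} to choose the lifts so that, for each $i$, the tuple $\tilde A^{-i}$ is algebraically independent in the $\cL_i$-sense over $\Geom_i(A) \cup \tilde A^i$. With $\tilde A = \K(A) \cup \bigcup_k \tilde A^k \subseteq \K(M)$, we have $A \subseteq \dcl(\tilde A)$, so applying the extended \ref{orth geom} over $\tilde A$ yields $x \equivL[\tilde A] x'$, whence $x \equivL[A] x'$.

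The main obstacle is the last step, namely upgrading $x_i \equivp[i]{\Geom_i(A)} x'_i$ to $x_i \equivp[i]{\tilde A} x'_i$ before invoking the extended orthogonality over $\tilde A$. The genericity of $\tilde A^{-i}$ ensures it contributes no distinguishing qf $\cL_i$-information beyond $\Geom_i(A) \cup \tilde A^i$, and $\tilde A^i$ consists of $\cL_i$-generic lifts of elements in $\Geom_i(A) \setminus \K(A)$; combining these observations with an application of $\cL_i(\Geom_i(A))$-homogeneity, used to move $x'_i$ so as to absorb the extra parameters, should allow the required upgrade to go through.
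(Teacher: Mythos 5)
Your Key Lemma is indeed equivalent to the statement, and your first step (extending \ref{orth geom} from single lattices to tuples in products of $\Geomim_i$-sorts over $A\subseteq\K(M)$) is essentially sound — though note it is not needed, since any finite product of $\Geomim_i$-sorts can be $\emptyset$-definably encoded inside $\Latt[i]{m}$ for $m$ large enough, which lets you fall back on \ref{orth geom def} as stated without redoing the lifting argument for $\Tor^i_m$.

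The genuine gap is in the passage from $A\subseteq\K(M)$ to $A\subseteq M$. You want to upgrade $x_i\equivp{\Geom_i(A)}x'_i$ to $x_i\equivp{\tilde A}x'_i$, where $\tilde A$ contains field lifts of the geometric parameters, and you gesture at "$\cL_i(\Geom_i(A))$-homogeneity used to move $x'_i$ so as to absorb the extra parameters." This is not a proof, and the step is genuinely delicate. A lift $\tilde b\in\K^{m^2}$ of $b\in\Latt[i]{m}(A)$ carries strictly more $\cL_i$-information than $b$: over $\Geom_i(A)\tilde b$ one sees the precise position of $x_i$ relative to the matrix $\tilde b$, not just to the lattice $b$. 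To make the upgrade work you would need $\tilde b$ to realize a $\Geom_i(A)$-definable (say, generically stable) type on the fiber of $b$, and then argue via symmetry that $\tp_{\cL_i}(\tilde b x_i/\Geom_i(A))=\tp_{\cL_i}(\tilde b x'_i/\Geom_i(A))$; you would moreover need to do this inside $\pM{i}$ rather than $\aM{i}$, and simultaneously choose the lifts of the $\Geom_j(A)$-parameters ($j\ne i$) to be $\cL_i$-generic over everything, via an approximation argument. None of this is in the proposal, and it is a substantial amount of work.

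The paper sidesteps the issue entirely by \emph{not lifting}. Write the defining formula as $\phi(s,\zeta,a)$ with $\zeta_i\in\Geomim_i(A)$ and $a\in\K(A)$, and treat the $\zeta_i$ as \emph{extra variables} in the $i$-th factor: let $Y\subseteq\prod_i(S_i\times S'_i)$ be defined by $\phi(s,y,a)$, now over $\K(A)$ only. Crucially, the parameters from $\Geomim_i$ are absorbed into the $i$-th factor of the product, so a decomposition $Y=\bigcup_j\prod_i Y_{i,j}$ with $Y_{i,j}$ quantifier-free $\cL_i(\K(A))$-definable immediately specializes to a decomposition of the fiber $X=Y_\zeta=\bigcup_j\prod_i(Y_{i,j})_{\zeta_i}$, with each $(Y_{i,j})_{\zeta_i}$ quantifier-free $\cL_i(\Geom_i(A))$-definable. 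After also encoding each $S_i\times S'_i$ into a single $\Latt[i]{m}$, one applies \ref{orth geom def} verbatim. You should replace your lifting step with this reduction.
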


\begin{proof}
Let \(X\) be defined by \(\phi(s,\zeta, a)\), where \(\zeta_i\in\Geomim_i(A)\) and \(a\in\K(A)\). Let \(Y\) be the set defined by \(\phi(s, y, a)\). It suffices to prove the proposition for \(Y\). Indeed, if \(Y\) is of the correct form, its fiber \(Y_{\zeta} := \{x \mid (x,\zeta)\in Y\}\) is also of the correct form, so we may assume that \(A\subseteq \K(M)\). Because any finite product of  sorts from \(\Geomim_i\) can be encoded, in \(\pCFG_i\), in any $\Latt[i]{m}$ for large enough \(m\), we may also assume that \(S_i = \Latt[i]{m}\) for some fixed \(m\). We can now apply \ref{orth geom def}.
\end{proof}

\subsection{The algebraic closure}\label{s:acl}

Let us now describe the algebraic closure in \(T\).

\begin{proposition}[acl Gi]
Let $A\subseteq M$. Then, for all \(i\), $\Latt[i]{m}(\acl(A))\subseteq\aclp(\Geom_i(A))$.
\end{proposition}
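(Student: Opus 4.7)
The plan is to deduce this directly from the orthogonality result \ref{orth geom gen}. Take any $s \in \Latt[i]{m}(\acl(A))$. By definition of algebraic closure, there exists an $\cL(A)$-formula $\phi(x)$ in the single variable $x$ of sort $\Latt[i]{m}$ such that $X := \phi(M)$ is a finite set containing $s$. The idea is that since $X$ lives entirely inside a single $\Geomim_i$-sort, \ref{orth geom gen} should force its defining formula to be quantifier free $\cL_i$ over $\Geom_i(A)$, and finiteness then yields membership in $\aclp(\Geom_i(A))$.

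More precisely, I would apply \ref{orth geom gen} to $X$ viewed as a subset of $\prod_j S_j$ with $S_i = \Latt[i]{m}$ and the other factors trivial (one can just take $S_j$ to be $\{*\}$, or more formally unfold the proof of \ref{orth geom gen} in this degenerate case). The output is a presentation of $X$ as a finite union of quantifier free $\cL_i(\Geom_i(A))$-definable subsets of $\Latt[i]{m}$. In particular $X$ itself is $\cL_i(\Geom_i(A))$-definable.

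Since $X$ is finite and $\cL_i(\Geom_i(A))$-definable, each of its elements, and in particular $s$, belongs to $\aclp(\Geom_i(A))$, which is what we wanted. There is no real obstacle here: everything rests on the already established orthogonality/purity of the geometric sorts and the trivial fact that finitely many realizations of a formula lie in the algebraic closure of its parameters. The only mild point of care is to check that \ref{orth geom gen} applies when we only have one non-trivial factor, which is immediate.
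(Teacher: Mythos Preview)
Your proof is correct and follows essentially the same route as the paper: both invoke \ref{orth geom gen} to conclude that any finite $\cL(A)$-definable subset of $\Latt[i]{m}$ is quantifier free $\cL_i(\Geom_i(A))$-definable, whence its elements lie in $\aclp(\Geom_i(A))$. The paper's proof is simply a one-line compression of what you wrote.
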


\begin{proof}
By \ref{orth geom gen}, any (finite) \(\cL(A)\)-definable subset of $\Latt[i]{m}$ is quantifier free \(\cL_i(\Geom_i(A))\)-definable. The proposition follows.
\end{proof}

The sorts $\K$ and $\Latt[i]{m}$ are obviously not orthogonal as there are functions with infinite range from $\K$ to $\Latt[i]{m}$. However, there are no functions with infinite range from $\Latt[i]{m}$ to $\K$:

\begin{proposition}[acl K]
Let $A\subseteq M$. Then $\K(\acl(A))\subseteq\alg{\K(A)}$.
\end{proposition}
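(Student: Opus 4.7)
My plan is to show the contrapositive: if \(c \in \K(M)\) is field-transcendental over \(\K(A)\), then \(c\) has an infinite \(\aut(M/A)\)-orbit in \(M\), and hence \(c \notin \acl(A)\). It then suffices to construct a single \(c^\star \in \K(M)\) with \(c^\star \neq c\) and \(c^\star \equiv_{\cL(A)} c\), since by saturation such a construction yields infinitely many distinct realizations of \(\tp(c/A)\).

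To construct \(c^\star\), I would mimic the structure of the proof of \ref{orth geom}. For each \(i\), the quantifier-free \(\cL_i\)-type of \(c\) over \(A\) (computed in \(\pM{i}\)) is non-isolated: since \(c\) is field-transcendental over \(\K(A)\), the \(\cL_i\)-algebraic closure in \(\K\) over \(A\) is \(\alg{\K(A)} \cap \pM{i}\) by the classical description of the algebraic closure in \(p\)-adically closed fields, which does not contain \(c\). Consequently, every quantifier-free \(\cL_i(A)\)-formula satisfied by \(c\) defines an \(i\)-open subset of \(\K(\pM{i})\) realized by points distinct from \(c\). Using \ref{AproThe} together with density of \(\K(M)\) in each \(\pM{i}\), I can find \(c^\star \in \K(M)\setminus \{c\}\) that realizes the quantifier-free \(\cL_i\)-type of \(c\) over \(A\) simultaneously for all \(i\). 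By the observation just before \S\ref{s:orth geom} that \(a \qfequiv[\cL(A)] b\) if and only if \(a \equivp{A} b\) for all \(i\), this gives \(c^\star \qfequiv[\cL(A)] c\).

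The remaining step is to upgrade this quantifier-free equivalence to full \(\cL(A)\)-equivalence. For this I would invoke a suitable amalgamation result, following the pattern used in \ref{orth geom} via \cite[Lemma\,6.12]{Mon-NTP2}: the existential closedness \ref{PpCExisClosed} of \(\PpC\) fields in the enriched language \(\cL\), together with the transcendence of both \(c\) and \(c^\star\) over \(\K(A)\), provides enough room to extend a partial isomorphism sending \(c\) to \(c^\star\) to an \(\cL(A)\)-automorphism of \(M\). This is essentially the same pattern that justifies the final step in the proof of \ref{orth geom}, applied now with \(\K\) in place of \(\Latt[i]{m}\).

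The main obstacle I expect is the upgrade from quantifier-free to full \(\cL(A)\)-type in the last step: \(T\) is not in general quantifier-eliminable in \(\cL\), and the amalgamation at the qf-level must be combined with \ref{PpCExisClosed} and the regularity of the relevant field extension to ensure the resulting partial isomorphism extends to an \(\cL(A)\)-automorphism. Once this is in place, the contradiction with \(c\in\acl(A)\) is immediate.
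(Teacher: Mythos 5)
Your approach is genuinely different from the paper's. The paper reduces everything to the case of field parameters: using \ref{orth geom gen} it replaces the geometric parameters in \(A\) by tuples \(s_i\in\Latt[i]{m}\), lifts these via \ref{lem:lift gen} to a generic \(e\in\bigcap_i s_i(M)\) with \(\trdeg{e/\K(A)c}=m^2\), and then quotes the known description of the algebraic closure over \emph{field} parameters (\cite[Lemma\,5.10]{Mon-NTP2}) to get \(c\in\alg{\K(A)e}\); algebraic independence of \(e\) from \(c\) over \(\K(A)\) then forces \(c\in\alg{\K(A)}\). This sidesteps any need to move elements of \(M\) around by \(\cL(A)\)-automorphisms.

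Your plan, by contrast, constructs a conjugate \(c^\star\) directly, and there is a genuine gap in the last step. The upgrade from \(c\qfequiv[\cL(A)]c^\star\) to \(c\equivL[A]c^\star\) is exactly the content of \ref{desc types}, and that result needs more than quantifier-free equivalence over \(A\): one must also match the \(\Lrg(F_0)\)-isomorphism type of \(\alg{\K(A)(c)}\cap M\) over \(\K(A)\) with that of \(\alg{\K(A)(c^\star)}\cap M\). Your construction of \(c^\star\) via the approximation theorem and density produces a point with the same quantifier-free \(\cL_i\)-types, but says nothing about what the relative algebraic closure of \(\K(A)(c^\star)\) in \(M\) looks like. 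Moreover, \ref{desc types} is proved in Section~\ref{s:amalg}, after this proposition, and \ref{type PpC}/\ref{forth} underlying it require the partial isomorphism to be between sets relatively algebraically closed in \(M\) in the field sense, which you would also need to arrange. None of this is fatal --- the order of results could be rearranged --- but as written the argument does not establish that \(c^\star\equivL[A]c\), which is the crux of the contrapositive you set out to prove. You flagged this step as the main obstacle; as stated, it is an unresolved gap rather than a routine application of \ref{PpCExisClosed}.
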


\begin{proof}
Let $c\in\K(\acl(A))$. As in the proof of \ref{orth geom gen}, we may assume that there exists \(s_i\in\Latt[i]{m}\), for some fixed \(m\), such that \(c\in\acl(\K(A)(s_{i})_{i\leq n})\). By Lemma\,\ref{lem:lift gen}, there exists $e\in\bigcap_i s_i(M)$ such that $\trdeg{e/\K(A)c} = m^2 = \card{e}$. Then, by \cite[Lemma 5.10]{Mon-NTP2}, $c\in\acl(\K(A)e) \subseteq\alg{\K(A)e}$. But $e$ is algebraically independent from $c$ over $A$. It follows that $c\in\alg{\K(A)}$.  
\end{proof}

\begin{corollary}[descr acl]
Let $A\subseteq M$. Then, \[\acl(A) \subseteq \bigcup_i \aclp(\Geom_i(A)).\]
\end{corollary}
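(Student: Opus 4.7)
The plan is to prove the inclusion elementwise by casing on the sort in which the element lives. Any $c \in \acl(A)$ belongs to a unique sort of $\cL$: either the field sort $\K$, or a proper geometric sort contained in $\Geomim_j$ for some $j \leq n$. These two cases are handled by \ref{acl K} and (an immediate extension of) \ref{acl Gi} respectively.

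First, suppose $c \in \Geomim_j(\acl(A))$ for some $j$. Since $c \in \acl(A)$, there is a finite $\cL(A)$-definable set $X$ contained in the sort of $c$ with $c \in X$. Applying \ref{orth geom gen} with only the index $j$ nontrivial, $X$ is quantifier free $\cL_j(\Geom_j(A))$-definable, so $c \in \aclp(\Geom_j(A))$. This is precisely the argument of \ref{acl Gi}, which goes through for any sort of $\Geomim_j$ because \ref{orth geom gen} is stated for arbitrary products of sorts in $\Geomim_i$.

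Second, suppose $c \in \K(\acl(A))$. By \ref{acl K}, $c \in \alg{\K(A)}$. Fix any $i \leq n$. As $c \in \K(M) \subseteq \K(\pM{i})$ and $c$ is field-theoretically algebraic over $\K(A)$, it has only finitely many $\K(A)$-conjugates inside $\pM{i}$, so $c \in \aclp(\K(A))$. Since $\K(A) \subseteq \Geom_i(A)$, this gives $c \in \aclp(\Geom_i(A))$.

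Combining the two cases yields $c \in \bigcup_i \aclp(\Geom_i(A))$, which is the claim. There is no real obstacle: the substantive content has already been established in \ref{acl Gi}, \ref{acl K}, and \ref{orth geom gen}; the corollary is merely a matter of assembling these and noting that the field sort is common to every $\cL_i$.
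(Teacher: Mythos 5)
Your proof is correct and follows essentially the same approach the paper intends: split by sort, apply \ref{acl Gi} (and its immediate extension from $\Latt[i]{m}$ to all of $\Geomim_i$ via \ref{orth geom gen}, a point the paper elides) for geometric imaginary sorts, and apply \ref{acl K} plus the observation that field-theoretic algebraicity over $\K(A)$ gives $\cL_i$-model-theoretic algebraicity in $\pM{i}$ for the field sort.
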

\begin{proof}
This is an easy consequence of Propositions\,\ref{prop:acl Gi} and \ref{prop:acl K}.
\end{proof}

\begin{corollary}[fin image]
Let $S$ be one of the sorts in \(\Geomim_i\) and $S'$ be a sort in \(\Geom_j\) for some \(j\neq i\). Then any $\cL(M)$-definable function $S \to S'$ has finite image.
\end{corollary}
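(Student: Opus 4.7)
Fix parameters $A \subseteq M$ over which $f$ is $\cL$-definable. The plan is to show that $f(s) \in \acl(A)$ for every $s \in S(M)$; the image of $f$ is then an $\cL(A)$-definable subset of $\acl(A)$, which by saturation must be finite.

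Suppose first that $S' \in \Geomim_j$. By \ref{descr acl}, $\acl(As) \subseteq \bigcup_k \aclp[k](\Geom_k(As))$. Since $s \in \Geomim_i$ lies in no sort of $\Geom_k$ for $k \neq i$, we have $\Geom_k(As) = \Geom_k(A)$ for all such $k$. Each $\aclp[k]$-closure lives in sorts of $\Geom_k$, and since $\Geom_k \cap \Geom_j = \{\K\}$ whenever $k \neq j$, an element lying in the sort $S' \in \Geomim_j$ can only come from the piece indexed by $k = j$, namely $\aclp[j](\Geom_j(As)) = \aclp[j](\Geom_j(A))$. Hence $f(s) \in \aclp[j](\Geom_j(A)) \subseteq \acl(A)$.

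Suppose instead that $S' = \K$. Applying \ref{acl K} to the set $As$, we get $f(s) \in \K(\acl(As)) \subseteq \alg{\K(As)} = \alg{\K(A)}$, where the last equality uses that $s \in \Geomim_i$ is not a field element, so $\K(As) = \K(A)$. Any $\cL(A)$-automorphism of $M$ fixes $\K(A)$ pointwise and restricts to a field automorphism of $\K(M)$, hence sends $f(s)$ to one of the finitely many roots of its minimal polynomial over $\K(A)$. So $f(s)$ has finitely many $\cL(A)$-conjugates in $M$ and therefore lies in $\acl(A)$.

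In both cases $f(S) \subseteq \acl(A)$, and by saturation $f(S)$ must be finite (otherwise, by realizing the partial type expressing membership in $f(S)$ together with avoidance of each element of $\acl(A)$, we would find some value outside $\acl(A)$). The main subtlety is the sort-matching step in the first case, which relies crucially on the fact that the copies $\bcL_i$ and $\bcL_j$ share only the field sort $\K$; once this is pinned down, \ref{descr acl} immediately rules out any contribution to $f(s)$ coming from $\aclp[k]$ for $k \neq j$, and the case $S' = \K$ is handled separately by combining \ref{acl K} with the field-theoretic finiteness of Galois orbits.
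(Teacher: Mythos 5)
Your proof is correct, and the overall strategy (reduce to showing $f(s)\in\acl(A)$ pointwise, then invoke saturation) is sound. For the case $S'=\K$, your argument via \ref{acl K} applied to $As$ together with $\K(As)=\K(A)$ is essentially identical to the paper's, which also cites \ref{acl K} plus compactness. For the case $S'\in\Geomim_j$, however, the paper takes a more direct route: it applies \ref{orth geom gen} to the graph of $f$ viewed as a definable subset of $S\times S'$, decomposing it as a finite union of products $X_{i,k}\times X_{j,k}$; since a function whose graph is a finite union of rectangles must have finite image, the conclusion is immediate. You instead pass through \ref{descr acl} and the sort-matching observation that $\aclp[j]$ lives entirely in $\Geom_j$-sorts, ultimately getting $f(s)\in\aclp[j](\Geom_j(A))$. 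Both routes work and rest on the same orthogonality result (\ref{descr acl} is itself downstream of \ref{orth geom gen} via \ref{acl Gi}), so the difference is in presentation rather than substance: the paper's version gives a one-line structural statement about the graph, while yours tracks where each algebraic element can live. One small imprecision worth noting: writing $\aclp[j](\Geom_j(A))\subseteq\acl(A)$ is literally too strong, since $\aclp[j]$ is taken inside $\pM{j}$; what you actually need (and what is true, by quantifier elimination in $\pCFG_j$) is that $\aclp[j](\Geom_j(A))\cap M\subseteq\acl(A)$, which suffices because $f(s)\in M$.
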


\begin{proof}
If $S' = \Latt[j]{m}$, then this follows immediately from \ref{orth geom gen}. If $S' = \K$, it follows from Proposition\,\ref{prop:acl K} and compactness.
\end{proof}

\begin{corollary}[descr acl ACVF]
Let \(A\subseteq \K(M)\). Then \(\acl(A) \subseteq
\bigcup_i\acla(A)\).
\end{corollary}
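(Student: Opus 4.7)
The plan is to derive this from \ref{descr acl} by a simple specialization, followed by an upgrade from $\aclp$ to $\acla$ made possible by the fact that the parameters live in the field sort. Since $A\subseteq \K(M)$ consists of field elements only, and $\K$ is one of the sorts of $\Geom_i$, we have $\Geom_i(A)=A$. Applying \ref{descr acl} then yields $\acl(A)\subseteq \bigcup_i \aclp(A)$. It remains to check that $\aclp(A)\subseteq \acla(A)$ for each $i$.

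For the field sort, this step is immediate: $\aclp(A)\cap\K = \alg{A}\cap\pM{i}\subseteq\alg{A}=\acla(A)\cap\K$, since both $\pCFG_i$ and $\ACVFG_i$ restrict on $\K$ to the field-theoretic algebraic closure (this parallels \ref{acl K}). So the only substantive work is on the geometric sorts.

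For an element of $\acl(A)\cap \Latt[i]{m}$, I would revisit the proof of \ref{acl Gi}: by \ref{orth geom gen}, this finite set is defined by a quantifier free $\cL_i(A)$-formula. With $A\subseteq\K$ and the free variable ranging over $\Latt[i]{m}$, the Macintyre enrichment of $\cL_i$ over $\bcL_i$ (predicates $\Pow[i]{m}$ and $\Div_i$ on the field sort) cannot enter non-trivially: every $\K$-term appearing in an atomic subformula is a parameter from $A$, so the Macintyre atomic subformulas collapse to truth constants. The defining formula is therefore a quantifier free $\bcL_i(A)$-formula, and interpreted in $\aM{i}$ it still picks out a finite subset of $\Latt[i]{m}(\aM{i})$, placing our elements in $\acla(A)$. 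The sorts $\Tor_m^i$ do not need separate treatment thanks to \ref{rem G}\ref{Tn non nec}, which reduces them to $\Latt[i]{m+1}$ because the $p$-adic valuations are discrete.

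The main obstacle is controlling the finiteness transfer for quantifier free $\bcL_i(A)$-definable subsets of $\Latt[i]{m}$ from $\pM{i}$ to $\aM{i}$. This requires going back to the presentation of the geometric language of Haskell--Hrushovski--Macpherson: its atomic predicates on $\Latt[i]{m}$ are, at field-parameter level, essentially lattice equalities of the form $s_m(\tilde{x})=y$ and a few auxiliary relations; each such atomic subformula has the same number of solutions (zero or one) in $\pM{i}$ as in $\aM{i}$ via the canonical embedding $\Latt[i]{m}(\pM{i})\hookrightarrow \Latt[i]{m}(\aM{i})$, which is injective because $\Val_i^{\aM{i}}\cap \K(\pM{i})=\Val_i^{\pM{i}}$. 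Once this finiteness transfer is secured, the remainder of the argument is pure bookkeeping.
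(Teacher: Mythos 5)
Your proof diverges from the paper's at the decisive step. Both arguments start from \ref{descr acl} to reduce the problem to showing $\aclp(A)\subseteq\acla(A)$, but the paper discharges this inclusion in a single line by citing \cite[Section\,4.(i)]{HruMarRid}, which controls the relative algebraic closure of a subfield inside a $p$-adically closed field sitting in its algebraic closure. You instead attempt a direct formula-by-formula argument, and it has two genuine gaps.

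First, you take for granted that the enrichment from $\bcL_i$ to $\cL_i$ adds only Macintyre-style predicates on the field sort, so that with $A\subseteq\K$ and the free variable in $\Latt[i]{m}$ they "collapse to truth constants." The paper, however, only requires $\cL_i$ to be \emph{some} definable enrichment in which $\pCFG_i$ eliminates quantifiers, and the quantifier elimination language of \cite{HruMarRid} adds relations living on the geometric sorts themselves; those do not trivialize just because the parameters are field elements. Second, and more seriously, finiteness does not transfer from $\pM{i}$ to $\aM{i}$ along a fixed quantifier-free $\bcL_i(A)$-formula, which is exactly the step you flag as the main obstacle. The value group of $\pM{i}$ is a $\Zz$-group, while that of $\aM{i}$ is divisible: identifying $\Latt[i]{1}$ with the value group via $s_1$, the quantifier-free condition $s_1(1)\le y\wedge y<s_1(p)$ carves out the singleton $\{0\}$ in $\pM{i}$ but an infinite set in $\aM{i}$. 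So the heuristic that each atomic subformula has "the same number of solutions (zero or one)" fails already at the atomic level, and the deferred bookkeeping cannot be completed along these lines. The comparison must be made at the level of $\acl$ itself, which is what the citation to \cite{HruMarRid} accomplishes.
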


\begin{proof}
By \ref{descr acl}, \(\acl(A) \subseteq
\bigcup_i\aclp(A)\). Since the relative algebraic
closure of a field inside a \(p\)-adic field is its \(p\)-adic closure
(cf. \cite[Section\,4.(i)]{HruMarRid}), we have \(\aclp(C)
\subseteq \acla(C)\).
\end{proof}

We have proved that hypothesis (\(\star\)) of Section\,\ref{s:approx}
holds of \(\ACVFG_i\) and \(T\). Let us now prove that (\(\dagger\)) also holds:

\begin{lemma}[descent]
Let $\epsilon\in\dcla(M)$, for some \(i\leq n\). Then there exists $\eta\in M$ such that \(\epsilon\) and \(\eta\) are interdefinable in the pair \((\aM{i},M)\).
\end{lemma}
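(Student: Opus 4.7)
The plan is to reduce, using elimination of imaginaries in $\ACVFG_i$, to the case where $\epsilon$ lies in one of the geometric sorts of $\aM{i}$, namely $\K$, $\Latt[i]{m}$, or $\Tor_m^i$, and then to construct $\eta$ in each sort separately.

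For the field sort, I will use the standard fact that in an $\ACVF$ of characteristic zero, $\dcla(M) \cap \K(\aM{i})$ coincides with the Henselization $\K(\pM{i})$ of $\K(M)$ with respect to $\val_i$. Hence $\epsilon \in \K(\pM{i})$, and by Hensel--Rychlik together with the density of $M$ in $\pM{i}$ (\ref{factPpC}), there exist $g \in M[X]$ and $a \in M$ with $\val_i(g(a)) > 2\val_i(g'(a))$ such that $\epsilon$ is the unique root of $g$ in $\aM{i}$ satisfying $\val_i(\epsilon - a) > \val_i(g'(a))$. The tuple $\eta := (g, a) \in \K(M)$ is then interdefinable with $\epsilon$ in the pair $(\aM{i}, M)$: in one direction $\epsilon$ is $\bcL_i$-definable from $\eta$ via this Hensel characterization, and in the other direction $g$ is recovered as the minimal polynomial of $\epsilon$ over $M$ (definable in the pair) and $a$ by a definable choice in the $\eta$-definable nonempty subset of $M$ of valid Hensel approximations.

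For the rank-one lattice sort, $\Latt[i]{1}(\aM{i}) = \val_i(\aM{i}) = \mathbb{Q} \otimes \val_i(M)$ is torsion-free, so there is a unique minimal positive integer $N$ with $N\epsilon \in \val_i(M)$, and $\eta := N\epsilon \in \Latt[i]{1}(M)$ is interdefinable with $\epsilon$ via the formula $N \cdot x = y$. For $\Latt[i]{m}$ with $m > 1$, I plan to use a normal form for lattices over $\Val_i(\aM{i})$ (bringing $\epsilon$ to a diagonal representative) to reduce to the rank-one case on the diagonal invariants, combined with density of $M$ in $\pM{i}$ to descend the off-diagonal basis data into $\K(M)^m$. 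The torsor sort will be handled analogously, via the projection $\tau_m : \Tor_m^i \to \Latt[i]{m}$ whose fibre is affine over the residue field $\Ff_p$.

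The hard part will be the higher-rank lattice and torsor cases, where the rational scaling in the divisible value group must be coordinated with the density-based descent of bases into $M$, while preserving interdefinability of $\epsilon$ and $\eta$ in both directions.
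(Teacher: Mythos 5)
Your proposal takes a genuinely different route from the paper's, and the key difference is that the paper exploits boundedness of $\K(M)$ everywhere. Since $\alg{\K(M)} = \alg{F_0}\K(M)$, the paper picks a primitive element $b \in \alg{F_0} \cap \K(\pM{i}) \subseteq \dcla(\emptyset)$ for the extension $\K(M)[\epsilon]/\K(M)$ and takes $\eta$ to be the coordinates of $\epsilon$ in the basis $\{b^j\}$; the higher-rank case similarly transports the lattice or coset through $f_a^{-1}$ for a primitive element $a \in \alg{F_0}$ of $\Val(L)$ over $\Val(\K(\pM{i}))$, landing in $\Tor^i_{mr}(M)$. This construction is symmetric in $\epsilon$ and $\eta$ by design, since $b$ (resp.\ $a$) is essentially a constant of the structure, so interdefinability in the pair is immediate.

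Your argument has a concrete gap already in the field case: you claim to recover $a$ from $\epsilon$ by ``a definable choice'' among the valid Hensel approximations in $M$. But that set is a ball (intersected with $M$), and there is no canonical, pair-definable element of a ball --- this is precisely the phenomenon that forces one to add geometric sorts in the first place. Dropping $a$ and keeping $\eta = g$ alone doesn't repair this, since $g$ can have several roots in $\pM{i}$ and $\epsilon$ is then not determined by $g$. Your $\Latt[i]{1}$ argument is fine but is the trivial case. For $m > 1$ and for the torsors the proposal is only a sketch, and as you acknowledge this is where the real content is: a ``diagonal normal form'' is not canonical for elements of $\GL{m}/\GL{m}(\Val_i)$ (left multiplication by $\GL{m}(\Val_i)$ changes the coset), and descending basis data ``by density'' runs into the same absence of canonical choices as above, this time coordinate-by-coordinate. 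The paper's use of primitive elements in $\alg{F_0}$ sidesteps both difficulties, but it relies essentially on boundedness, which your proposal never invokes; that omission is a reliable sign that something is missing.
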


\begin{proof}
We know by \ref{factPpC} that the Henselianization of $\K(M)$ with respect to $\val_i$  is the $p$-adic closure $\K(M_i)$ of $\K(M)$. 
It follows that if $\epsilon\in \K(\dcla(M))$, then $\epsilon\in\K(\pM{i})$. 
As $\alg{\K(M)} = \alg{F_0}\K(M)$, and $\K(\pM{i}) \subseteq \alg{\K(M)}$ there is $b \in \alg{F_0}$ such that $\K(M)[\epsilon]= \K(M)[b]$. Since \(\epsilon\in\pM{i}\), it follows that \(b\in \alg{F_0}\cap \K(M_i) :=  F_1 \subseteq\dcla(\emptyset)\). Let \(\eta\) be the unique tuple in \(M\) such that \(\epsilon = \sum_i \eta_ib^i\).  Then \(\eta\) and \(\epsilon\) are interdefinable in the pair \((\aM{i},M)\).

The case $\epsilon\in \Latt[i]{m}\cup \Tor^i_m$ is tackled as in  
  \cite[(ii) p.\,30]{HruMarRid}. 
  Let us first assume that \(\epsilon\in\Tor^i_m\).
  Find a finite extension \(L\) of \(\K(\pM{i})\), of degree \(r\), in which the lattice coded by
  \(\tau_m(\epsilon)\), denoted $\latt(\epsilon)$, has a basis and the
  coset coded by \(\epsilon\) has a point \(c\). 
 Since \(\alg{\K(\pM{i})} = \alg{F_1}\K(\pM{i})\), we can find \(a\in\alg{F_0}\) such that $\Val(L) = \Val(\K(\pM{i}))[a]$. Let \(f_a :(\K(\pM{i}))^r\to L\) be the map \(r \mapsto \sum_i r_i
  a^i\). Then \(f_a^{-1}(c+\Mid_i\latt(\epsilon)) \subseteq (\K(\pM{i}))^{mr}\) is coded by some
  \(\eta\in \Tor^i_{mr}(\pM{i}) = \Tor^i_{mr}(M)\). If \(a\) and
  \(a'\) have the same \(\bcL_i(F_0)\)-orbit, then they have the same
  \(\bcL_i(F_1)\)-orbit and hence the same
  \(\bcL_i(\pM{i})\)-orbit. In particular, there exists \(\tau\in\aut[\bcL_i](\aM{i})\) fixing \(M_i\) pointwise such that
  \(\tau(a) = a'\). Then \(f_{a}^{-1}(c + \Mid_i\latt(\epsilon)) = f_{\tau(a)}^{-1}(c+\Mid_i\latt(\epsilon)) =
  f_{a'}^{-1}(c+\Mid_i(\latt(\epsilon)))\). It follows that any automorphism of \(\aM{i}\) that stabilizes \(M\) globally fixes \(\epsilon\) if and only if it fixes \(\eta\). Note that we did not use in the proof that \(\aM{i}\) was the algebraic closure of \(M\), so the same argument works in a sufficiently saturated and homogeneous model of the pair \((\aM{i},M)\). So \(\eta\) and \(\epsilon\) are interdefinable. If
  \(\epsilon\in\Latt[i]{m}\), since \(\epsilon\) and
  \(\Mid_i\latt(\epsilon)\) are interdefinable in \(\aM{i}\), we can
  apply the previous case to \(\Mid_i\latt(\epsilon)\).
\end{proof}

We can now apply \ref{inv acl NIP} in our setting. If we
already knew elimination of imaginaries in \(T\), then the following result
would be a rather immediate corollary of the description of the
algebraic closure and the fact that \(\pCF\) and \(\ACVF\) are \(\NIP\). But since we
do not know elimination of imaginaries yet, this is where the encoding
of (germs of) functions happens.

\begin{corollary}[inv acl cor]
Let $i \in \{1, \ldots, n\}$. Let \(A\subseteq \eq{M}\) containing
$\Geom(\acleq(A))$ and \(a \in \K(N)\), where
\(N\supsel M\), be a tuple such that
\(\tpa(a/\aM{i})\in\Inv(\aM{i}/\Geom_i(A))\) and
\(\tpp(\Geom_i(a)/\pM{i})\in\Inv(\pM{i}/\Geom_i(A))\). Then \(\tpa(\Geom_i(\acleq(Aa))/\aM{i})\in\Inv(\aM{i}/\Geom_i(A))\) and \(\tpp(\Geom_i(\acleq(Aa))/\pM{i})\in\Inv(\pM{i}/\Geom_i(A))\).
\end{corollary}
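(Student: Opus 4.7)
The plan is to apply Proposition \ref{inv acl NIP} twice. For the conclusion about $\aM{i}$, I take $\cL_0 = \cL_1 = \bcL_i$ and $M_0 = M_1 = \aM{i}$. For the conclusion about $\pM{i}$, I take $\cL_0 = \bcL_i$, $\cL_1 = \cL_i$, $M_0 = \aM{i}$, $M_1 = \pM{i}$. In both cases the dominant sort is $\cH = \K$ and the reduct sorts are $\Real_0 = \Geom_i$; note that $\Geom_i(a) = a$ since $a \in \K(N)$ and $\K$ is one of the sorts of $\bcL_i$.

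The abstract prerequisites of \ref{inv acl NIP} are all in place: $\ACVFG_i$ eliminates quantifiers and imaginaries (Haskell--Hrushovski--Macpherson); $\pCFG_i$ eliminates quantifiers in $\cL_i$ by construction (\ref{main not}); automorphisms of $\pM{i}$ extend to its $\bcL_i$-algebraic closure $\aM{i}$, and automorphisms of $M$ likewise extend to $\aM{i}$ by lifting to the algebraic closure and extending the valuation; hypothesis $(\dagger)$ is provided by \ref{descent}; hypothesis $(\star)$ is \ref{descr acl ACVF}; and both $\ACVFG_i$ and $\pCFG_i$ are NIP. The specific hypothesis $q := \tpp(a/\pM{i}) \in \Inv(\pM{i}/\Geom_i(A))$ is exactly the one given in the statement.

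What remains is to promote the hypothesis $p := \tpa(a/\aM{i}) \in \Inv(\aM{i}/\Geom_i(A))$ to $p \in \cDef(\aM{i}/\Geom_i(A))$. This is the main step and relies on $C$-minimality of $\ACVFG_i$: by \ref{dens def Cmin}, $\Inv(\aM{i}/B) = \cDef(\aM{i}/B)$ whenever $B = \acla(B)$. The subtlety I anticipate as the main obstacle is that $\Geom_i(A)$ is not a priori $\bcL_i$-algebraically closed in $\aM{i}$ --- for example, $\alg{\K(A)} \not\subseteq \K(M)$ in general --- so a direct application of \ref{dens def Cmin} yields only $p \in \cDef(\aM{i}/\acla(\Geom_i(A)))$. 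One then uses the hypothesis $\Geom(\acleq(A)) \subseteq A$ together with the $\Geom_i(A)$-invariance of $p$ and the finiteness of $\aut(\aM{i}/\Geom_i(A))$-orbits inside $\acla(\Geom_i(A))$ to descend the approximating definable types to be $\Geom_i(A)$-definable. Once this is in place, the two applications of \ref{inv acl NIP} yield, respectively, $\tpa(\Geom_i(\acleq(Aa))/\aM{i}) \in \Inv(\aM{i}/\Geom_i(A))$ and $\tpp(\Geom_i(\acleq(Aa))/\pM{i}) \in \Inv(\pM{i}/\Geom_i(A))$.
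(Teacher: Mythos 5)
Your approach matches the paper's: the paper's proof of \ref{inv acl cor} is literally "apply \ref{inv acl NIP} twice, once with $T_1 = \ACVFG_i$ and once with $T_1 = \pCFG_i$," and your choices of $\cL_0$, $\cL_1$, $M_0$, $M_1$, $\cH$, $\Real_0$ are exactly the intended ones. Your checklist of prerequisites ($\dagger$ via \ref{descent}, $\star$ via \ref{descr acl ACVF}, NIP, quantifier elimination, automorphism extension) is also correct.

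The more interesting point is the mismatch you flag: \ref{inv acl NIP} requires $p := \tpa(a/\aM{i}) \in \cDef(\aM{i}/\Geom_i(A))$, whereas \ref{inv acl cor}'s stated hypothesis only gives $p \in \Inv(\aM{i}/\Geom_i(A))$. You are right that this is a genuine subtlety: \ref{dens def Cmin} upgrades $\Inv$ to $\cDef$ only over bases that are $\bcL_i$-algebraically closed in $\aM{i}$, and $\Geom_i(A)$ is essentially never such a set (its $\K$-part is not algebraically closed since $\K(M)$ isn't). The paper does not comment on this point at all; you have been more careful than the published proof. That said, the second half of your resolution is not yet a proof. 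Applying \ref{dens def Cmin} over $\acla(\Geom_i(A))$ gives, for each $\phi \in p$, a type $q \in \Def(\aM{i}/\acla(\Geom_i(A)))$ containing $\phi$; the canonical base $\code{q}$ indeed has finite orbit under $\aut_{\bcL_i}(\aM{i}/\Geom_i(A))$, but it is not clear how finiteness of the orbit produces a single $\Geom_i(A)$-definable type still containing the \emph{original} formula $\phi$ (the conjugates of $q$ contain the conjugates of $\phi$, not $\phi$ itself, and one cannot average definable types). The hypothesis $\Geom(\acleq(A)) \subseteq A$ helps control which conjugates of $\code{q}$ land in $\Geom_i(M)$ (by \ref{descent}, essentially), but elements of $\acla(\Geom_i(A)) \setminus M$ are not touched by that hypothesis. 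So the descent step you invoke needs an actual argument; as stated it is a plausibility claim, not a proof. An alternative repair, arguably closer to the spirit of the paper, is to note that in the unique place \ref{inv acl cor} is used --- the inductive step of \ref{loc dens higher} --- the relevant type $\tpa(a/\aM{i})$ is built via \ref{loc dens} as the generic of a chain of $\Geom_i(A)$-definable balls, and such types lie in $\cDef(\aM{i}/\Geom_i(A))$ by construction (each formula in $\Gen{P_i}$ already lies in $\Gen{b}$ for some $b \in P_i$, and $\Gen{b}$ is $\code{b}$-definable with $\code{b} \in \Geom_i(A)$); but then one must also track $\cDef$-ness through the induction, which \ref{inv acl NIP}'s $\Inv$-only conclusion does not automatically give. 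Either way, this corner needs filling.
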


\begin{proof}
We apply \ref{inv acl NIP} twice --- once with \(T_1= \ACVFG_i\) and a second time with \(T_1 = \pCFG_i\).
\end{proof}

\subsection{A local density result}\label{s:dens}
In this section, we prove a weak equivalent for bounded \(\PpC\) fields of the first author's canonical density result for bounded pseudo real closed fields, which plays a key role in proving the elimination of imaginaries for these structures. In \cite{Mon-EIPRC}, the first author proved the following:

\begin{proposition}[globdensPRC][{\cite[Proposition\,4.8]{Mon-EIPRC}}]
Let $(M, <_1, \ldots, <_n)$ be a bounded n-PRC field considered in the language of rings with constants for a countable elementary substructure. Let \(S \subseteq M\) be definable. Then there are a finite set \(S_0 \subseteq S\), \(m \in\Zz_{\geq 0}\) and \(I_j^i\), for \(j < m\), an \(<_i\)-interval in \(M\), such that:
\begin{enumerate}
\item \(S \subseteq S_0\cup \bigcup_j I_j\), where \(I_j := \bigcap_i I_j^i\);
\item for every \(j\), \(S\cap I_j\) is dense in \(I_j\) for the topology generated be all the \(<_i\)-topologies;
\item Every \(I_j\) is \(M\cap\acleq(\code{S})\)-definable.
\end{enumerate}
\end{proposition}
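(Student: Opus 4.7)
The plan is to combine the o-minimality of each real closure $\tilde{M}_i$ of $(M,<_i)$ with density of $M$ in $\tilde{M}_i$ and a PRC analog of the approximation theorem \ref{AproThe}. This parallels how, in the \(\PpC\) case, simultaneous approximation across the \(\val_i\) together with density in each $\pM{i}$ refines the decomposition of one-variable definable sets; the main new feature in the PRC setting is that there is a canonical such decomposition coming from o-minimal cell decomposition in each $\tilde{M}_i$.

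First I would work one ordering at a time. For each $i$, $\tilde{M}_i$ is o-minimal. Interpreting a formula defining $S$ in $\tilde{M}_i$ yields a definable set $\tilde{S}_i \subseteq \tilde{M}_i$ which, by o-minimality, decomposes canonically into a finite set together with finitely many maximal $<_i$-open intervals $J^i_1,\ldots,J^i_{k_i}$ on which $\tilde{S}_i$ is $<_i$-dense; the endpoints of each $J^i_j$ lie in the algebraic closure of $\code{\tilde{S}_i}$ computed in $\tilde{M}_i^{\mathrm{eq}}$. Taking a common refinement across all $i$, I index the resulting intervals uniformly as $I^i_j$ for $j<m$, set $I_j := \bigcap_i I^i_j$, and absorb into a finite set $S_0$ the finitely many points of $S$ that lie on some $<_i$-boundary, which establishes item (1).

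Next I would verify density of $S\cap I_j$ in $I_j$ for the joint topology (item (2)). A basic open neighborhood of a point of $I_j$ is a finite intersection $\bigcap_i V_i$ with each $V_i$ an $<_i$-open set; since $\tilde{S}_i$ is $<_i$-dense in $J^i_j \supseteq I^i_j$, the set $V_i\cap \tilde{S}_i\cap I^i_j$ is $<_i$-open and non-empty in $\tilde{M}_i$, hence by density of $M$ in $\tilde{M}_i$ (the PRC analog of \ref{factPpC}(ii)) already contains an $<_i$-open subset $W_i$ defined over $M$. The approximation theorem for PRC fields (the analog of \ref{AproThe}) then produces an element of $M$, and in fact of $S$, in $\bigcap_i W_i \subseteq \bigcap_i V_i \cap I_j$.

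The main obstacle will be the canonicity statement (item (3)). Each endpoint of $J^i_j$ is fixed by any automorphism of the monster model that stabilizes $\code{S}$ and preserves $<_i$, so it lies in the algebraic closure of $\code{S}$ computed inside $\tilde{M}_i$; pushing this down to $M^{\mathrm{eq}}$ requires a PRC version of \ref{descent} together with an orthogonality result in the spirit of \ref{orth geom}, ensuring that the endpoints coming from different orderings do not entangle and that each endpoint is pinned down by $\code{S}$ alone. Density of $M$ in $\tilde{M}_i$ then lets us arrange these endpoints to actually lie in $M\cap\acleq(\code{S})$, completing (3). Getting this descent to work cleanly and uniformly across all $i$ simultaneously, while preserving the geometric shape of the boxes $I_j$, is the delicate point of the argument.
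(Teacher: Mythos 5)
This proposition is quoted in the paper from \cite[Proposition\,4.8]{Mon-EIPRC} and is not reproved here, so there is no in-paper proof to compare against; your argument has to stand on its own.

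The first step already fails. Interpreting a defining formula $\phi$ for $S$ in the real closure $\tilde{M}_i$ of $(M,<_i)$ produces a set $\tilde{S}_i := \phi(\tilde{M}_i)$ that is in general unrelated to $S = \phi(M)$: the PRC field $M$ is not existentially (let alone elementarily) closed in $\tilde{M}_i$ for the full multi-ordered structure, so the trace $\tilde{S}_i\cap M$ need not be $S$. Worse, $\tilde{M}_i$ is a real closed field and therefore carries a unique order; the other orderings $<_j$ for $j\neq i$ do not extend to $\tilde{M}_i$, and whatever ring formula defines $<_j$ in $M$ defines something unrelated once evaluated in $\tilde{M}_i$. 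The $n$-ordered structure that the box decomposition $I_j=\bigcap_i I^i_j$ is meant to capture is thus destroyed by passing to the real closures one at a time, and o-minimal cell decomposition of $\tilde{S}_i$ cannot produce it. This is precisely why the density theorems for bounded PRC fields in \cite{Mon-NTP2} are proved by intrinsic arguments inside $M$ --- quantifier simplification up to boxes, simultaneous approximation, and amalgamation --- rather than by pushing the problem into each $\tilde{M}_i$.

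Your verification of item (2) inherits the same confusion, since it argues with density of $\tilde{S}_i$ rather than of $S$, and items (1) and (3) are only gestured at. Item (3), the canonicity of the decomposition, is the actual content of \cite[Proposition\,4.8]{Mon-EIPRC}; the present paper explicitly leaves its $\PpC$ analogue as an open question (\ref{globdensPpC}), which should be read as a warning that this step is genuinely delicate and requires a detailed argument, not a descent-and-orthogonality sketch.
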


A non-canonical version of that result for \(\PpC\) fields --- where interval are replaced by \(p\)-adic \(1\)-cells, and a weakening of (3) holds --- was proved by the first author, cf. \cite[Theorem\,6.8]{Mon-NTP2}. However, since elimination of imaginaries is all about canonicity, we need something more:

\begin{question}[globdensPpC]
Does the $\PpC[n]$ version of \cite[Theorem 4.4]{Mon-EIPRC} hold?
\end{question}

Instead of answering that question, in this paper, we choose to focus on a related, more tractable, problem by describing types rather than definable sets. We now revert to our standard notation described in \ref{notationPrin}.

\begin{definition}
Let $\aM{i}\models\ACVFG_i$. We denote by $\Balls_i(\aM{i})$ the set
of balls in $\aM{i}$. We consider points to be closed balls of
infinite radius and the whole field to be an open ball of radius
\(-\infty\).

Let \(P \subseteq \Balls(\aM{i})\). We define \(\Gen{P}\), the generic
type of \(\bigcap_{b\in P}b\) as: \[\Gen{P}(x) := \{x\in b\mid b\in
P\} \cup \{x\nin b' \mid \forall b\in P,b'\subset b\}.\]
\end{definition}

By $C$-minimality of $\ACVF$, when it is consistent, \(\Gen{P}\)
generates a complete type. We will not distinguish $\Gen{P}$ from the
complete type it generates. Note that, if \(P\subseteq \Balls_i(A)\),
then \(\Gen{P} \in \Inv(\aM{i}/A)\).

\begin{proposition}[loc dens]
Let $A \subseteq \eq{M}$ containing $\Geom(\acleq(A))$ and
$c\in\K(M)$. For each $i\leq n$, let $P_i =
\{b\in\Balls_i(A)\mid c\in b\}$. Then the partial type:
\[\tp(c/A)\cup\bigcup_i \Gen{P_i}\]
is consistent.
\end{proposition}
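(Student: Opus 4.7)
The plan is to argue by compactness. It will suffice, given a formula $\phi(x) \in \tp(c/A)$, a finite selection of balls $b_{i,j} \in P_i$, and a finite selection of balls $b'_{i,k} \in \Balls_i(\aM{i})$ with $b'_{i,k} \subsetneq b$ for all $b \in P_i$, to find $c^\star \in \K(M)$ with $\phi(c^\star)$, $c^\star \in b_{i,j}$ for all $i,j$, and $c^\star \notin b'_{i,k}$ for all $i,k$. Writing $B^i := \bigcap_j b_{i,j}$ (an $A$-definable ball containing $c$) and $V_i := B^i \setminus \bigcup_k b'_{i,k}$ (an $i$-open subset of $B^i$), the task becomes: produce $c^\star \in \phi(\K(M)) \cap \bigcap_i V_i$.

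First I dispose of the algebraic case. If $c \in \alg{\K(A)}$, then by \ref{acl K} combined with the hypothesis $\Geom(\acleq(A)) \subseteq A$ one has $c \in A$, so $\{c\} \in \Balls_i(A) \cap P_i$, forcing $B^i = \{c\}$; since no ball is strictly smaller than a point, the $b'_{i,k}$ conditions are vacuous and $c^\star = c$ works.

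Assume now $c$ is transcendental over $\K(A)$. I would apply the density theorem for bounded $\PpC$ fields \cite[Theorem~6.8]{Mon-NTP2} to the $\LL(A)$-definable set $\phi(\K(M)) \cap \bigcap_i B^i$: as $c$ is transcendental, it lies in an $\LL(A)$-definable $n$-cell $C = \bigcap_i C^i$, each $C^i$ being a $p$-adic $i$-cell (hence $i$-open), and $\phi(\K(M)) \cap C$ is dense in $C$ in the combined $n$-fold topology. For each $i$, I would verify that $C \cap V_i$ is $i$-open and non-empty: if $b'_{i,k} \in \Balls_i(A)$, then $c \notin b'_{i,k}$ (else $b'_{i,k} \in P_i$, contradicting $b'_{i,k} \subsetneq b'_{i,k}$), so $c$ itself witnesses the non-emptyness; if $b'_{i,k} \notin \Balls_i(A)$, a Neumann-type argument using the infinite $\aut(\aM{i}/A)$-orbit of $b'_{i,k}$ shows that finitely many such non-$A$-definable proper sub-balls cannot cover the $A$-definable set $C$. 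Density of $\K(M)$ in $\K(\pM{i})$ (\ref{factPpC}\,(ii)) transfers non-emptyness to $\K(M)$. The approximation theorem \ref{AproThe} then delivers a non-empty intersection $C \cap \bigcap_i V_i$ in $\K(M)$; this intersection is $n$-open in $C$, so by the density of $\phi(\K(M)) \cap C$ in $C$ it meets $\phi(\K(M))$, producing the desired $c^\star$.

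The main obstacle will be the Neumann-type argument ruling out $C \subseteq \bigcup_k b'_{i,k}$ when some $b'_{i,k}$ is non-$A$-definable. This is where the hypothesis $\Geom(\acleq(A)) \subseteq A$ becomes essential: it guarantees that such a $b'_{i,k}$ has infinite $\aut(\aM{i}/A)$-orbit, which, combined with the $A$-definability (hence $\aut(\aM{i}/A)$-invariance) of $C$, forbids the covering. Assembling the individual $i$-open witnesses via the approximation theorem produces the single $c^\star \in \K(M)$ that simultaneously realizes $\phi$ and lies generically, in the $\ACVF_i$-sense over $\aM{i}$, in each $B^i$.
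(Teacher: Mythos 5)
Your proposal takes a genuinely different route from the paper's, and unfortunately it runs head-on into exactly the difficulty the authors are steering around. The paper's proof does \emph{not} invoke the density theorem at all: it argues by contradiction, reduces (using the fact that $P_i$ has no minimum) to a single deep ball $b_i'$ per valuation, and then iterates a "smallest covering ball" construction. The key step is that, for fixed deep balls $b_2',\ldots,b_n'$, the ball $f_1(b_2',\ldots,b_n')$ covering $(\phi(M)\cap\bigcap_i b_i(M))\sminus\bigcup_{i\geq 2}b_i'(M)$ is an $\cL(A)$-definable function of the $b_j'$ landing in $\Balls_1$; by \ref{cor:fin image} this function has finite image, which is then pulled into $A$ via $\Geom(\acleq(A))\subseteq A$. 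The existence of smallest covering balls (because $\val_i(M)$ is a pure $\Zz$-group) and the orthogonality result \ref{cor:fin image} are the real engines. Your proof uses neither.

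The first gap is the appeal to \cite[Theorem 6.8]{Mon-NTP2} to produce an $\LL(A)$-definable cell $C$ containing $c$. The paper explicitly describes that theorem as the ``non-canonical'' version and poses, as the open \ref{globdensPpC}, whether a canonical version (cells definable over $\acleq$ of the code) exists for $\PpC$ fields; the whole point of \ref{loc dens} is to sidestep that open question by describing \emph{types} instead of definable sets. Your assertion that the cell through a transcendental $c$ can be taken $\LL(A)$-definable is precisely the canonicity that is not known. In addition, \cite[Theorem 6.8]{Mon-NTP2} is stated for definable sets with field parameters, whereas here $A\subseteq\eq{M}$ carries geometric imaginaries (ball codes), so even applying it requires an extra reduction you have not supplied.

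The second gap is the Neumann step. You treat the $A$-definable $b'_{i,k}$ and the non-$A$-definable ones separately; but a covering $C\subseteq\bigcup_k b'_{i,k}$ could easily mix the two kinds, in which case neither of your two sub-arguments rules it out. Moreover even in the purely non-$A$-definable case, having infinite $\aut(\aM{i}/A)$-orbit does not by itself forbid finitely many such balls from covering an infinite $A$-definable set: one needs some structural input (nestedness of balls, smallest covering ball, $p$-adicity) that you do not deploy. This is exactly what the paper's function $f_1$ and Corollary \ref{cor:fin image} supply. You yourself flag this as ``the main obstacle,'' and I think that instinct is right: as written the step does not close. The compactness reduction, the treatment of the algebraic case (though the justification should go through $\K(\acleq(A))\subseteq\Geom(\acleq(A))\subseteq A$ rather than \ref{acl K}, which points the other way), and the final approximation-theorem assembly are all fine; the gap is concentrated in the two points above.
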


\begin{proof}
Assume that this type is not consistent. By compactness, there exists
a $\cL(A)$-formula $\phi(x)$, balls $b_i\in P_i$ and
$b_{i, j}\in\Balls_i(\aM{i})$ with $b_{i, j}\subset b_i$ for all $b_i\in
P_i$, such that $M\models\forall x\,((\phi(x)\wedge\bigwedge_i x\in
b_i)\impform \bigvee_{i, j} x\in b_{i, j})$. Because the valuation
$\val_i$ is $p$-adic, any ball is covered by finitely many subballs
and, because $A$ contains $\Geom(\acleq(A))$, $P_i$ cannot
have a minimal element. It follows that replacing the $b_{i, j}$ by the
smallest ball covering them, we may assume that there is only one
$b_{i, j}$ denoted $b_i'$. For all tuple of balls $b_2',\ldots, b_n'$
(with finite radius) where $b_i'\in\Balls_i$, let
$f_1(b_2',\ldots, b_n')$ be the minimal ball covering
$(\phi(M)\cap\bigcap b_i(M))\sminus \bigcup_{i\geq 2}b_i'(M)$. This
ball exists because, by \ref{factPpC}, $\val_i(M)$ is a pure
$\Zz$-group.

By Corollary \ref{cor:fin image}, $f_1(b_2',\ldots,b_n') \in
A$. Removing (the set encoded by) $f_1(b_2',\ldots, b_n')$ from
$\phi(x)$, we still have $M\models\phi(c)$, but now $M\models\forall
x\,\phi(x)\wedge\bigwedge_i x\in b_i\impform\bigvee_{i\geq 2} x\in b_i'$. By
induction, removing $\eq{\cL}(A)$-definable sets from $\phi(x)$, we
can get to a situation where $M\models\phi(c)$ and
$\phi(M)\cap\bigcap_ib_i(M) = \emptyset$, a contradiction.
\end{proof}

\begin{proposition}[loc dens Qp]
Let $A \subseteq \eq{M}$ containing $\Geom(\acleq(A))$ and 
$c\in\K(M)$. Then we can find types \(p_i\in\Inv(\aM{i}/\Geom_i(A))\)
and \(q_i\in\Inv(\pM{i}/\Geom_i(A))\) such that the partial type:
\[\tp(c/A)\cup\bigcup_i p_i \cup\bigcup_i q_i\]
is consistent.
\end{proposition}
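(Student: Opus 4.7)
The types $p_i$ are provided by \ref{loc dens}: set $p_i := \Gen{P_i}$, which lies in $\Inv(\aM{i}/\Geom_i(A))$ since $P_i$ is built from balls over $\Geom_i(A)$. The plan is to let $c^\sharp$ realize the consistent partial type $\tp(c/A)\cup\bigcup_i p_i$ in a sufficiently saturated extension of $M$, with corresponding extensions $\pM{i}^\sharp\subseteq\aM{i}^\sharp$, and then define $q_i := \tpp(c^\sharp/\pM{i})$. This choice makes $c^\sharp$ a simultaneous realization of $\tp(c/A)\cup\bigcup_i p_i\cup\bigcup_i q_i$, so the whole proposition reduces to the claim that $q_i \in \Inv(\pM{i}/\Geom_i(A))$.

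To verify this claim, I would fix $\sigma \in \aut[\cL_i](\pM{i}/\Geom_i(A))$ and first extend it to $\tilde\sigma \in \aut[\bcL_i](\aM{i}/\Geom_i(A))$, which is possible because $\aM{i}$ is the algebraic closure of $\pM{i}$ as a valued field and the geometric sorts are canonical quotients. Since $p_i$ is $\aut[\bcL_i](\aM{i}/\Geom_i(A))$-invariant, $\tilde\sigma(p_i) = p_i$, and the homogeneity of $\aM{i}^\sharp$ then lets us further extend to $\hat\sigma \in \aut[\bcL_i](\aM{i}^\sharp/\Geom_i(A))$ fixing $c^\sharp$ pointwise. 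For any quantifier-free $\cL_i$-formula $\chi$ and $d \in \pM{i}$, we then have in $\pM{i}^\sharp$ that $\chi(c^\sharp, d)$ is equivalent to $\chi(\hat\sigma(c^\sharp), \hat\sigma(d)) = \chi(c^\sharp, \sigma(d))$, \emph{provided} $\hat\sigma$ restricts to an $\cL_i$-automorphism of $\pM{i}^\sharp$.

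The main obstacle is precisely this last point: arranging that $\hat\sigma$ stabilizes $\pM{i}^\sharp$ set-wise, so that it descends to a Macintyre-language automorphism and therefore preserves the predicates $\Pow[i]{m}$. This is where the $\PpC$ structure enters: by \ref{factPpC}, $\pM{i}^\sharp$ is the unique Henselianization of $M^\sharp$ inside $\aM{i}^\sharp$, so it would be enough to force $\hat\sigma$ to stabilize $M^\sharp$, using the density of $M$ in $\pM{i}$ together with the existential closedness statement \ref{PpCExisClosed}. Should this automorphism-chasing prove too delicate, the natural back-up plan is to re-run the cover-and-remove argument of \ref{loc dens} directly, with the family $P_i$ enlarged to all quantifier-free $\cL_i(\Geom_i(A))$-definable sets containing $c$: a failure of invariance of $q_i$ would then, by compactness, give a quantifier-free $\cL_i(\pM{i})$-formula separating two $\Geom_i(A)$-conjugates; subtracting its \enquote{minimal covering} $\eq{\cL}(A)$-coded piece from $\phi$ via \ref{fin image} --- inductively peeling off $\Pow[i]{m}$-cosets and relying on the finiteness of $\inv{\K}/(\inv{\K})^m$ to make the iteration terminate --- produces the same contradiction as in \ref{loc dens}.
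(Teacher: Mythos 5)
Your opening moves coincide with the paper's: take $p_i := \Gen{P_i}$ from \ref{loc dens}, replace $c$ by a realization $c^\sharp$ of $\tp(c/A)\cup\bigcup_i p_i$ (this is what the paper means by "we may assume $c\models\alpha_{P_i}$"), and reduce the proposition to showing that $q_i := \tpp(c^\sharp/\pM{i})$ is $\aut_{\cL_i}(\pM{i}/\Geom_i(A))$-invariant. But the gap you flag in the automorphism-chasing is genuine and, as far as I can see, not repairable along the lines you sketch. The automorphism $\hat\sigma$ you produce lives in $\aut_{\bcL_i}(\aM{i}^\sharp/\Geom_i(A))$; it has no reason to stabilize $\pM{i}^\sharp$, and the obstruction is structural rather than technical: $\sigma$ is only required to fix $\Geom_i(A)$, so it need not stabilize $M$ inside $\pM{i}$, hence $\tilde\sigma(M)\neq M$ is entirely possible; any extension $\hat\sigma$ fixing $c^\sharp$ will then fail to map $N$ (and hence its Henselianization $\pM{i}^\sharp$) to itself because it already moves $M\subseteq N$. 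Invoking density and \ref{PpCExisClosed} does not help: those give approximation and existential closedness, not set-wise stability of a distinguished $p$-adic closure under an ACVF automorphism that was built with no regard to the Macintyre data. Concretely, $p_i$ records ball membership but says nothing about which $\Pow[i]{m}$-coset $c^\sharp-e$ falls into for $e\in\pM{i}$, and an ACVF automorphism simply cannot be forced to preserve that extra information.

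The paper avoids automorphism-chasing entirely and goes directly after the power-coset data, which is the ingredient your argument is missing. It fixes one $d\in\bigcap_{b\in P_i}b(\pM{i})$ and, for each $m$, a normalizer $f_m\in\alg{F_0}\cap M\subseteq\dclp(\emptyset)$ with $f_m(c-d)\in\K(\pM{i})^m$, then checks via the ultrametric inequality that (i) the choice of $d$ does not matter, because any two points of $\bigcap P_i(\pM{i})$ are much closer to each other than to a generic realization, and (ii) for $e\notin\bigcap P_i(\pM{i})$ the $m$-th-power coset of $c-e$ equals that of $d-e$, which is visibly $\Geom_i(A)$-invariant since $\sigma$ maps $d$ to another valid reference point. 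This exhibits $q_i$ as generated by $\restr{\alpha_{P_i}}{\pM{i}}\cup\{f_m(x-d)\in\K(\pM{i})^m\mid m\}$, a set of conditions that is invariant by inspection. Your back-up plan is closer to this spirit, but as written it is too loose: the cover-and-remove argument of \ref{loc dens} leans on balls having finitely many maximal proper subballs and on minimal covering balls existing because $\val_i(M)$ is a $\Zz$-group, neither of which obviously transfers to arbitrary quantifier-free $\cL_i(\Geom_i(A))$-definable sets, and "peeling off $\Pow[i]{m}$-cosets by finiteness of $\inv{\K}/(\inv{\K})^m$" is not yet a termination argument. The direct normalizer analysis is what closes the gap.
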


\begin{proof}
By \ref{loc dens}, we can find \(P_i\subseteq\Balls_i(A)\) such that
\(\tp(c/A)\cup\bigcup_i \alpha_{P_i}\) is consistent. If any of the
\(\alpha_{P_i}\) is a realized type, then \(c\in \K(A)\) and we are
done.

We may assume that \(c\models\alpha_{P_i}\). 
Pick any \(d \in\bigcap_{b\in P_i}b(\pM{i})\) and let \(f_m\in \alg{F_0}\cap M\) be
such that \(f_m(c-d) \in \K(\pM{i})^m\). Then for any \(d'
\in\bigcap_{b\in P_i}b(\pM{i})\), \(f_m(c-d') \in \K(\pM{i})^m\) since
\(d\) and \(d'\) are much closer to each other than to \(c\). Note
also that for any \(e \nin \bigcap_{b\in P_i}b(\pM{i})\),
\(f_m(c-e)\in \K(\pM{i})^m\) if and only if \(f_m(d-e)\in
\K(\pM{i})^m\), a value that does not depend on the choice of \(c\).

So \(\restr{\alpha_{P_i}(x)}{\pM{i}}\cup\{f_m(x-d)\in
\K(\pM{i})^m \mid m\in\Zz_{>0}\}\) generates \(q_i(x) := \tpp(c/M_i)\), which is
therefore in \(\Inv(\pM{i}/\Geom_{i}(A))\).
\end{proof}

\begin{remark}
In fact, \ref{loc dens Qp} would follow from a positive answer to \ref{globdensPpC}. The converse is far from clear.

Similarly, the pseudo real closed version of \ref{loc dens Qp} follows from \ref{globdensPRC}.
\end{remark}

The higher dimension version of \ref{loc dens Qp} follows formally by
induction from \ref{inv acl cor}:

\begin{theorem}[loc dens higher]
Let $A \subseteq \eq{M}$ containing $\Geom(\acleq(A))$ and
$c\in\K^m(M)$.  Then we can find types
\(p_i\in\Inv(\aM{i}/\Geom_i(A))\) and
\(q_i\in\Inv(\pM{i}/\Geom_i(A))\) such that the partial type:
\[\tp(c/A)\cup\bigcup_i p_i \cup\bigcup_i q_i\]
is consistent.
\end{theorem}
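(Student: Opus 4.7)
The plan is to induct on the arity \(m\); the base case \(m=1\) is exactly \ref{loc dens Qp}. For the inductive step, split \(c = (c', c_m)\) with \(c'\in\K^{m-1}(M)\) and \(c_m\in\K(M)\), and apply the induction hypothesis to \(c'\) to obtain \(p_i'\in\Inv(\aM{i}/\Geom_i(A))\) and \(q_i'\in\Inv(\pM{i}/\Geom_i(A))\) such that \(\tp(c'/A)\cup\bigcup_i p_i'\cup\bigcup_i q_i'\) is consistent. Realize this partial type in a sufficiently saturated elementary extension \(N\supsel M\) by some \(\hat c'\in\K^{m-1}(N)\); since the \(p_i'\) and \(q_i'\) are complete, we then have \(\tpa(\hat c'/\aM{i})=p_i'\) and \(\tpp(\hat c'/\pM{i})=q_i'\). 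Using \(\hat c'\equiv_A c'\), transport \(c_m\) along an \(A\)-automorphism to obtain \(\hat c_m\) with \((\hat c',\hat c_m)\equiv_A(c',c_m)\).

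Set \(A' := \Geom(\acleq(A\hat c'))\), which automatically contains \(\Geom(\acleq(A'))\) and contains \(\hat c'\) itself since \(\K\) is among the geometric sorts. Applying \ref{inv acl cor} to \(\hat c'\) then gives \(\tpa(\Geom_i(A')/\aM{i})\in\Inv(\aM{i}/\Geom_i(A))\) and \(\tpp(\Geom_i(A')/\pM{i})\in\Inv(\pM{i}/\Geom_i(A))\). Now apply \ref{loc dens Qp} inside a sufficiently saturated elementary extension of \(N\) to \(\hat c_m\) over \(A'\) to obtain types \(p_i^\ast\) and \(q_i^\ast\), invariant over \(\Geom_i(A')\), such that \(\tp(\hat c_m/A')\cup\bigcup_i p_i^\ast\cup\bigcup_i q_i^\ast\) is consistent; realize this by \(\tilde c_m\equiv_{A'}\hat c_m\). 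Since \(\hat c'\in A'\), this yields \((\hat c',\tilde c_m)\equiv_A(c',c_m)\).

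Define \(p_i:=\tpa(\hat c'\tilde c_m/\aM{i})\) and \(q_i:=\tpp(\hat c'\tilde c_m/\pM{i})\). The tuple \((\hat c',\tilde c_m)\) realizes \(\tp(c/A)\cup\bigcup_i p_i\cup\bigcup_i q_i\), establishing consistency; it remains only to check that \(p_i\in\Inv(\aM{i}/\Geom_i(A))\) (symmetrically for \(q_i\)). This is the standard composition of invariant types: given \(\bar b_1\equiva{\Geom_i(A)}\bar b_2\), pick \(\sigma\in\aut(\aM{i}/\Geom_i(A))\) with \(\sigma(\bar b_1)=\bar b_2\), extend to \(\tilde\sigma\) on the monster, use the \(\Geom_i(A)\)-invariance of \(\tpa(\Geom_i(A')/\aM{i})\) to compose with some \(\tau\) fixing \(\aM{i}\) pointwise so that \(\mu:=\tau\circ\tilde\sigma\) fixes \(\Geom_i(A')\) pointwise while still sending \(\bar b_1\) to \(\bar b_2\), and then apply the \(\Geom_i(A')\)-invariance of \(p_i^\ast\) to conclude \(\phi(\tilde c_m,\hat c',\bar b_1)\Leftrightarrow\phi(\tilde c_m,\hat c',\bar b_2)\) for every \(\bcL_i\)-formula \(\phi\). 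I expect the only delicate point to be arranging that \(\hat c'\), \(\hat c_m\), \(\tilde c_m\), and all the invariance witnesses live in a common ambient monster so the composition of invariances is unambiguous; the transitivity step itself is then routine.
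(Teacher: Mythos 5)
Your argument follows essentially the same route as the paper's proof: induct on arity, split off the last coordinate, upgrade the invariance from over $\Geom_i(A)$ to over the geometric algebraic closure of $A\hat c'$ via \ref{inv acl cor}, and then apply \ref{loc dens Qp}. The invariance verification you sketch at the end (composing automorphisms so that $\mu$ fixes $\Geom_i(A')$ pointwise while sending $\bar b_1$ to $\bar b_2$, then invoking $\Geom_i(A')$-invariance of $p_i^\ast$) is correct.

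However, there is a genuine, if small, gap in the consistency step. You set $A' := \Geom(\acleq(A\hat c'))$, realize the output of \ref{loc dens Qp} by $\tilde c_m \equiv_{A'} \hat c_m$, and then claim that this yields $(\hat c',\tilde c_m)\equiv_A(c',c_m)$ because $\hat c'\in A'$. But the hypothesis of the theorem allows $A\subseteq\eq{M}$ to contain non-geometric imaginaries, and those need not lie in $A'=\Geom(\acleq(A\hat c'))$, nor in $\dcleq(A')$: asserting $A\subseteq\dcleq(\Geom(\acleq(A)))$ is essentially elimination of imaginaries, which is exactly what is being proved. So an automorphism witnessing $\tilde c_m\equiv_{A'}\hat c_m$ need not fix $A$ pointwise, and $(\hat c',\tilde c_m)\equiv_A(c',c_m)$ does not follow. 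The fix is precisely the paper's choice: replace $A'$ by $A\cup\Geom(\acleq(A\hat c'))$ (the set the paper calls $E$). This set still contains $\Geom(\acleq(\cdot))$ of itself, it has the same $\Geom_i$-part (so the invariance conclusion from \ref{loc dens Qp} is unchanged), and now $A\cup\{\hat c'\}\subseteq A'$, so $\tilde c_m\equiv_{A'}\hat c_m$ does give $(\hat c',\tilde c_m)\equiv_A(c',c_m)$. With that one-line correction your proof is complete and matches the paper's.
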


\begin{proof}
We proceed by induction on \(m\).  Assume that we have
\(p_i(x)\in\Inv(\aM{i}/\Geom_i(A))\),
\(q_i\in\Inv(\pM{i}/\Geom_i(A))\) and \(a\models\bigcup_i
p_i\cup\bigcup_i q_i\). Pick some \(c\in\K(M)\). We want to find types
\(r_i(x, y)\in\Inv(\aM{i}/\Geom_i(A))\) and
\(s_i(x, y)\in\Inv(\pM{i}/\Geom_i(A))\) such that
\(\tp(ac/A)\cup\bigcup_i r_i\cup\bigcup_i s_i\) are consistant.

Let \(E_i = \Geom_i(\acleq(Aa))\). By \ref{inv acl cor}, we find
\(p'_i\in\Inv(\aM{i}/\Geom_i(A))\) and
\(q'_i\in\Inv(\pM{i}/\Geom_i(A))\) such that \(E_i\models p'_i\cup
q'_i\). Let \(E := A\cup\bigcup_iE_i\), then
\(\Geom(\acleq(E))\subseteq E\). Let \(N \supsel M\)
containing \(M\cup E\), \(N_i\) and \(\alg{N_i}\) as in
\ref{notationPrin}. Applying \ref{loc dens}, we can find
\(p''_i\in\Inv(\alg{N_i}/E_i)\), \(q''_i\in\Inv(\pM{i}/E_i)\) and
\(c^*\models\tp(c/E)\cup\bigcup_i p''_i\cup q_i''\). Let \(r_i(x, y) :=
\{\phi(x, y)\mid \phi(x,a)\in p''_i\}\) and \(s_i(x, y) :=
\{\phi(x, y)\mid \phi(x, a)\in q''_i\}\). It is easy to check that these
two types have the required properties.
\end{proof}

\begin{remark}
 Note that, so far, we have not used \cite[Theorem\,2.6]{HruMarRid} ---  elimination of imaginaries in \(p\)-adically closed fields. Moreover, if \(M\) is \(p\)-adically closed, then there is just one
  \(p\)-adic valuation on \(M\) and the type \(q_1\) that we
  constructed is a complete type. We have just reproved a strong version of the invariant extension property (cf. \cite[Corollary\,4.7]{HruMarRid}), of which, by \ref{QE crit}, weak elimination of imaginaries follows. The main difference between the two proofs is that the proof presented here focuses from the start on finding invariant extensions. The arguments in this paper could also easily be carried out in finite extensions of \(p\)-adically closed fields.
\end{remark}

\subsection{Amalgamation over geometric points}\label{s:amalg}

The goal of this section is to improve Montenegro's amalgamation
result \cite[Theorem 3.21]{Mon-NTP2} to allow amalgamation over bases
of geometric points.

\begin{fact}
Let \(E\substr A\substr C\) be field extensions.
\begin{enumerate}[label=(\roman*),ref=\thetheorem.(\roman*)]
\item\label{fct:full reg} If \(E\substr C\) is regular, then
  \(\alg{E}A\cap C = EA\).
\item\label{fct:top reg} If \(A\substr C\) is regular, then
  \(\alg{E}C\cap \alg{A} = \alg{E}A\)
\end{enumerate}
\end{fact}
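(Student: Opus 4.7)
My plan is to derive each part from a standard equivalent characterization of regularity of field extensions.

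For \ref{fct:full reg}, I use that $E \subseteq C$ being regular is equivalent to $\alg{E}$ and $C$ being linearly disjoint over $E$ (in any common overfield). Given $x \in \alg{E}A \cap C$, choose a finite subextension $E \subseteq L \subseteq \alg{E}$ with $x \in LA$, and pick an $E$-basis $b_1 = 1, b_2, \ldots, b_n$ of $L$. Since $L \subseteq \alg{E}$ is linearly disjoint from $A \subseteq C$ over $E$, the $b_i$ remain a basis of $LA$ over $A$, so one may write $x = \sum_i b_i a_i$ uniquely with $a_i \in A$. Rearranging gives a $C$-linear relation $(x-a_1)\cdot 1 - a_2 b_2 - \cdots - a_n b_n = 0$ among $1, b_2, \ldots, b_n \in \alg{E}$; but linear disjointness of $\alg{E}$ and $C$ over $E$ forces these elements to be $C$-linearly independent, so $x = a_1 \in A = EA$.

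For \ref{fct:top reg}, I use the Galois-theoretic formulation: $A \subseteq C$ regular is equivalent to the restriction map $\aut(\alg{C}/C) \to \aut(\alg{A}/A)$ being surjective (the characteristic zero setting of the paper makes this clean). Given $x \in \alg{E}C \cap \alg{A}$, pick a finite \emph{Galois} extension $L$ of $E$ inside $\alg{E}$ with $x \in LC$, so that $LA/A$ is also finite Galois inside $\alg{A}$. For any $\sigma \in \aut(\alg{A}/LA)$, lift it via regularity to some $\tilde\sigma \in \aut(\alg{C}/C)$; as $\tilde\sigma$ fixes both $LA$ and $C$, it fixes their compositum $LA \cdot C = LC$ (the last equality using $A \subseteq C$), so $\tilde\sigma(x) = x$, whence $\sigma(x) = x$. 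Hence $x$ is fixed by every $\sigma \in \aut(\alg{A}/LA)$, giving $x \in LA \subseteq \alg{E}A$.

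No serious obstacle is expected: both statements are standard and the arguments are self-contained. The main points requiring care are choosing $L$ Galois in \ref{fct:top reg} so that $LA/A$ is actually Galois over $A$, and noticing that regularity is invoked twice in \ref{fct:full reg} --- first to extend the $E$-basis of $L$ to an $A$-basis of $LA$, and then to keep it linearly independent over $C$.
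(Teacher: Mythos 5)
Your proofs are correct. For \ref{fct:full reg} you are, in effect, unrolling the same argument as the paper, which simply notes that linear disjointness of $C$ and $\alg{E}$ over $E$ base-changes to linear disjointness of $C$ and $\alg{E}A$ over $A$; your element-by-element computation with the basis $1,b_2,\ldots,b_n$ is the concrete content of that base-change step. For \ref{fct:top reg}, however, you take a genuinely different route: the paper again uses the linear-disjointness characterization, arguing that linear disjointness of $\alg{A}$ and $C$ over $A$ base-changes (along $A \subseteq \alg{E}A \subseteq \alg{A}$) to linear disjointness of $\alg{A}$ and $\alg{E}C$ over $\alg{E}A$, whence the intersection is $\alg{E}A$. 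You instead invoke the Galois-theoretic reformulation of regularity (surjectivity of $\aut(\alg{C}/C) \to \aut(\alg{A}/A)$) and descend by lifting automorphisms. Both are valid; the paper's linear-disjointness argument is uniform across both parts and works in arbitrary characteristic without modification, while your Galois argument is arguably more transparent but leans on characteristic $0$ (more precisely, on regular $=$ relatively algebraically closed, and on $\alg{A}/LA$ being Galois), which is fine in the paper's setting but would need adjustment in positive characteristic. One small point worth keeping in mind: in \ref{fct:full reg} you say "regularity is invoked twice," but it is really one hypothesis (linear disjointness of $\alg{E}$ and $C$ over $E$) used at two scales (restricted to $L,A$ and then to $L,C$), which matches the paper's single base-change step.
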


\begin{proof}
\begin{enumerate}[label=(\roman*)]
\item Since \(C\) is linearly disjoint from \(\alg{E}\) over \(E\),
  then \(C\) is linearly disjoint from \(\alg{E}A\) over \(A\).
\item Since \(\alg{A}\) is linearly disjoint from \(C\) over \(A\), and \(A \substr \alg{E}A \substr \alg{A}\), we also have that \(\alg{A}\) is linearly disjoint from \(\alg{E}AC = \alg{E}C\) over \(\alg{E}A\).\qedhere
\end{enumerate}
\end{proof}

\begin{lemma}[desc alg]
Let \(A\subseteq \K(M)\). If \(\alg{A}\cap \K(M)\subseteq A\), then \(\alg{A} = \alg{F_0}A\).
\end{lemma}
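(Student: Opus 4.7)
The plan is to derive the lemma as a one-line consequence of \ref{fct:top reg}, combined with the identity \(\alg{\K(M)} = \alg{F_0}\K(M)\) recorded in the remark following \ref{notationPrin}. Since \(F_0\) is a set of constants in our language, I take \(F_0 \subseteq A\) for granted; if this is not built into conventions, one first replaces \(A\) by the compositum \(F_0 A\), which still lies in \(\K(M)\) and inherits the hypothesis \(\alg{F_0 A} \cap \K(M) \subseteq F_0 A\) using the regularity of \(F_0 \substr \K(M)\).

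First I would dispatch the easy inclusion \(\alg{F_0} A \subseteq \alg{A}\): every element of \(\alg{F_0}\) is algebraic over \(F_0 \subseteq A\), so \(\alg{F_0} \subseteq \alg{A}\), and hence the compositum \(\alg{F_0} A\) sits inside the field \(\alg{A}\).

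For the reverse inclusion, the hypothesis \(\alg{A} \cap \K(M) \subseteq A\) expresses exactly that \(A \substr \K(M)\) is a regular extension (in characteristic zero, relative algebraic closedness and regularity coincide). From the cited remark, \(\alg{A} \subseteq \alg{\K(M)} = \alg{F_0}\K(M)\). Applying \ref{fct:top reg} to the chain \(F_0 \substr A \substr \K(M)\) yields \(\alg{F_0}\K(M) \cap \alg{A} = \alg{F_0} A\), and intersecting with \(\alg{A}\) gives
\[\alg{A} = \alg{A} \cap \alg{F_0}\K(M) = \alg{F_0} A.\]

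There is no substantive obstacle here: once one recognizes that the hypothesis is simply regularity of \(A \substr \K(M)\) and that boundedness confines \(\alg{A}\) inside \(\alg{F_0}\K(M)\), the conclusion drops straight out of \ref{fct:top reg}. The only mild subtlety is the implicit normalization \(F_0 \subseteq A\), which can be either built into the convention or arranged by passing to \(F_0 A\) as above.
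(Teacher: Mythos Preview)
Your proof is correct and follows essentially the same route as the paper: both use \(\alg{A}\subseteq\alg{\K(M)}=\alg{F_0}\K(M)\) and then apply Fact\,\ref{fct:top reg} to the regular extension \(A\substr\K(M)\) to obtain \(\alg{F_0}\K(M)\cap\alg{A}=\alg{F_0}A\). You simply spell out the easy inclusion and the \(F_0\subseteq A\) normalization more explicitly than the paper does.
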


\begin{proof}
We have \(\alg{A}\subseteq \alg{\K(M)} = \alg{F_0}\K(M)\). So \(\alg{A}\subseteq \alg{F_0}\K(M) \cap \alg{A} = \alg{F_0}A\). The equality follows from Fact\,\ref{fct:top reg} and the fact that \(A\substr\K(M)\) is regular.
\end{proof}

\begin{lemma}[alg disj]
Let \(A \subseteq M\subsel N\models T\) and \(\K(A)\subseteq C\subseteq\K(N)\). Assume that \(\K(\dcl(A))\cap M\subseteq A\) and that, for some \(i\leq n\), \(\qftp_{\cL_i}(C/M)\) is \(\aut[\cL](M/A)\)-invariant. Then \(C\) and and \(\K(M)\) are algebraically disjoint over \(\K(A)\).
\end{lemma}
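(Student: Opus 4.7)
The plan is to unwind algebraic disjointness and reduce it to the following concrete statement: for every finite tuple $c = (c_1, \ldots, c_k) \in C^k$, any polynomial relation on $c$ with coefficients in $\K(M)$ can be chosen with coefficients in $\K(A)$. First, I would observe that the ideal
\[I \;=\; \{P \in \K(M)[x_1, \ldots, x_k] : P(c) = 0\}\]
of relations on $c$ over $\K(M)$ is stable under the coefficient-wise action of $\aut[\cL](M/A)$. Indeed, $P(c) = 0$ is a quantifier-free formula in the ring language, a fortiori in $\cL_i$, whose parameters are the coefficients of $P$; the $\aut[\cL](M/A)$-invariance of $\qftp_{\cL_i}(C/M)$ then gives $P(c) = 0$ if and only if $P^\sigma(c) = 0$, where $P^\sigma$ is the polynomial obtained from $P$ by applying $\sigma$ to its coefficients.

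Next, assuming toward a contradiction that some finite tuple $c \in C^k$ is algebraically dependent over $\K(M)$ but independent over $\K(A)$, I would pick any nonzero $P \in I$, set $D = \deg P$, and restrict to the finite-dimensional $\K(M)$-vector space $V_D \subseteq \K(M)[x_1, \ldots, x_k]$ of polynomials of degree at most $D$. With respect to the monomial basis of $V_D$ (which is pointwise fixed by $\aut[\cL](M/A)$), the subspace $W = I \cap V_D$ is nonzero and $\aut[\cL](M/A)$-invariant. Passing to reduced row echelon form relative to the monomial basis produces a canonical basis of $W$ whose entries are uniquely determined by $W$ itself, hence fixed by $\aut[\cL](M/A)$. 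By the saturation and homogeneity of $M$, these entries lie in $\K \cap \dcl(A)$, i.e.\ in $\K(\dcl(A))$, and since they already lie in $M$, the hypothesis $\K(\dcl(A)) \cap M \subseteq A$ places them in $\K(A)$. This yields a nonzero $Q \in W \subseteq I$ with coefficients in $\K(A)$; the relation $Q(c) = 0$ contradicts the algebraic independence of $c$ over $\K(A)$.

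The main technical step will be the reduced row echelon form trick: it is what converts an $\aut[\cL](M/A)$-invariant subspace of a $\K(M)$-vector space into one admitting a canonical basis whose coordinates are individually fixed by $\aut[\cL](M/A)$, rather than merely permuted. The role of the hypothesis $\K(\dcl(A)) \cap M \subseteq A$ is precisely to transfer these fixed entries from abstract definable closure back into $\K(A)$, which is what allows the descent of the relation from $\K(M)$ down to $\K(A)$.
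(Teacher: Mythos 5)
Your proof is correct, and it reaches the conclusion by the same underlying mechanism as the paper --- descend the algebraic relations on a finite tuple from $\K(M)$ to $\K(A)$ using $\aut[\cL](M/A)$-invariance of $\qftp_{\cL_i}(C/M)$, and then use the hypothesis $\K(\dcl(A))\cap M\subseteq A$ to land the descended parameters inside $\K(A)$. The technical implementation differs in one place: the paper takes a finite tuple $m$ from $C$, forms its algebraic locus $V$ over $\K(M)$, observes that $\sigma(V)=V$ for all $\sigma\in\aut[\cL](M/A)$, and then invokes elimination of imaginaries in $\ACF$ to conclude that the canonical parameter (equivalently, the field of definition) of $V$ lies in $\K(\dcl(A))\subseteq\K(A)$. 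You instead work with the ideal $I$ of relations, truncate to a finite-dimensional invariant subspace $W\subseteq V_D$, and use the uniqueness of the reduced row echelon basis to produce coordinates fixed pointwise (not merely setwise) by $\aut[\cL](M/A)$; this is an explicit, elementary re-derivation of precisely the descent fact that $\ACF$-EI supplies. Both routes use the same hypotheses in the same roles and rely on the same homogeneity of the ambient model to pass from ``fixed by all automorphisms over $A$'' to ``in $\dcl(A)$''; yours is self-contained at the cost of a few extra lines, while the paper's is shorter by citing a known result.
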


\begin{proof}
Let \(m\) be any finite tuple in \(C\) and \(V\) be its algebraic locus over \(\K(M)\). We have to show that \(V\) is defined over \(\K(B)\). Pick any \(\sigma\in\aut[\cL](M/A)\). Since \(\qftp_{\cL_i}(C/M)\) is \(\aut[\cL](M/A)\)-invariant, we also have \(m\in\sigma_i(V) = \sigma(V)\) and hence \(V\subseteq \sigma(V)\). It follows, using \(\sigma^{-1}\), that \(V = \sigma(V)\) and, by elimination of imaginaries in \(\ACF\), \(V\) is defined over \(\K(\dcl(B)) = \alg{\K(B)}\cap M = \K(B)\).
\end{proof}

Let us first generalize the characterization of types in bounded pseudo \(p\)-adically closed fields (cf. \cite[Proposition\,10.4]{Jar-PpC} and \cite[\S 6.4]{Mon-NTP2}) to allow geometric parameters:

\begin{lemma}[forth]
Let \(M\), \(N\models T\), \(A\subseteq M\), \(B\subseteq N\) and \(f : A\to B\) be an \(\cL\)-isomorphism. Assume that \(\alg{\K(A)}\cap M \subseteq A\) and \(\alg{\K(B)}\cap N \subseteq B\) and \(N\) is \(\card{M}^+\)-saturated. Then there exists an \(\cL\)-embedding \(g:M\to N\) such that \(\alg{\K(g(M))}\cap N\subseteq g(M)\).
\end{lemma}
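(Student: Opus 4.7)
The plan is to construct $g$ by a back-and-forth argument, extending $f$ along a chain $(f_\alpha : A_\alpha \to B_\alpha)$ of partial $\cL$-isomorphisms with $f_0 = f$, maintaining the invariant $\alg{\K(A_\alpha)} \cap M \subseteq A_\alpha$ and its mirror on $N$. I would first reduce to processing elements of the field sort: for each geometric element $s\in\Geom_i(A)$ (say $s\in\Latt[i]{m}(A)$), pick a lift $c_s\in\GL{m}(\K(M))$ and a lift $c'_s\in\GL{m}(\K(N))$ of $f(s)$; adjoining these to $A$ and $B$ respectively, then closing under algebraic closure in $M$ and $N$, preserves the invariant. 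Once $g$ is built on $\K(M)$, it will extend canonically to the geometric sorts via $c\cdot\GL{m}(\Val_i)\mapsto g(c)\cdot\GL{m}(\Val_i)$, agreeing with $f$ on $\Geom(A)$ by choice of the lifts.

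The heart of the argument is the one-element field-sort extension. Given $a\in\K(M)\setminus\K(A_\alpha)$, I want to find $b\in\K(N)$ realizing the transported $\cL$-type $f_\alpha(\tp(a/A_\alpha))$. By $\card{M}^+$-saturation of $N$, it suffices to show that each single $\cL(B_\alpha)$-formula $\psi(x)$ in this type is realizable in $N$. Identifying $\K(A_\alpha)$ with $\K(B_\alpha)$ via $f_\alpha$, let $L := \K(B_\alpha)(a')$ denote the transported field extension equipped with the $\cL$-structure inherited from $\K(A_\alpha)(a)\subseteq M$. The algebraic-closure hypothesis yields that $\K(A_\alpha) \subseteq \K(M)$ is regular, hence $\K(B_\alpha) \subseteq L$ is regular, and by construction it is totally $p$-adic. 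Taking a compositum $L' := L\cdot \K(N)$ in a common extension (linear disjointness over $\K(B_\alpha)$ follows from $\K(B_\alpha)$ being algebraically closed in $\K(N)$, by the mirror invariant) produces a regular totally $p$-adic extension of $\K(N)$, hence an $\cL$-extension of $N$ in which $\psi$ is realized. Then \ref{PpCExisClosed} delivers a realization of $\psi$ in $N$ itself. Extending $f_\alpha$ to $A_{\alpha+1} := \alg{\K(A_\alpha a)}\cap M$ is then routine via \ref{desc alg}, which reduces the task to matching images of elements of a fixed algebraic extension of $F_0$ that are already pinned down by the enlargement made in the reduction step.

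The main obstacle is verifying that the compositum $L'$ carries a well-defined $\cL$-structure extending both $L$ and the $\cL$-structure of $\K(N)$; concretely, for each $i$ the valuation $\val_i$ on $L$ must extend compatibly with $\val_i$ on $\K(N)$ over their common restriction to $\K(B_\alpha)$, which relies on linear disjointness and the independence of the $p$-adic valuations on $N$ (Fact \ref{factPpC}). Once each one-element step is verified, iterating along an enumeration of $\K(M)$ produces a field embedding $\K(M)\to\K(N)$ whose canonical extension to the geometric sorts is the required $g$, and the final invariant $\alg{\K(g(M))}\cap N \subseteq g(M)$ follows by the usual argument: any element of this intersection is algebraic over finitely many generators and therefore already lies in some $B_\alpha$.
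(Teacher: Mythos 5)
Your proposal takes a genuinely different route: the paper builds a whole $\cL$-isomorphic copy $M^\star$ of $M$ in one step, by choosing, for each $i$, an $\Aut(\fU_i/B_i)$-invariant type $p_i$ in the $p$-adically closed field $\fU_i\supseteq\Geom_i(M)\cup\Geom_i(N)$ extending the pushed-forward $\cL_i$-type of $\K(M)$, identifying the realizations $F_i$, and then embedding the resulting $M^\star\models T$ into $N$ using regularity, totality $p$-adicity and the $\PpC$ property of $N$. Your plan instead runs a stepwise "forth" argument on field elements with an explicit compositum $L\cdot\K(N)$ at each stage. The two ideas use the same core ingredients (regularity, totally $p$-adic extensions, \ref{thm:PpCExisClosed}), but the paper's use of invariant types sidesteps the need to re-equip a compositum with compatible $p$-adic valuations at every step, since $F_i$ is constructed already sitting inside $\fU_i$; your compositum step instead needs precisely \ref{lem:p-adic amalg}, which you correctly identify as "the main obstacle" but do not supply (it is proved in §\ref{s:amalg} of the paper and does not depend on the present lemma, so this is fillable rather than circular). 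Your opening reduction — pre-lifting each geometric element $s\in\Geom_i(A)$ to $\GL{m}(\K(M))$ and each $f(s)$ to $\GL{m}(\K(N))$ — is both unjustified (it is not explained why $f$ extends to the chosen lifts on the two sides) and unnecessary: once $g$ is an $\cL$-embedding on $\K(M)\cup A$, it automatically agrees with $f$ on $\Geom(A)$, because for $c\in s(M)$ the formula $s_m(c)=s$ is quantifier-free and $g$ fixes $s\in A$.

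The more serious gap is the maintenance of the algebraic-closure invariant at each step. You write of realizing "the transported $\cL$-type $f_\alpha(\tp(a/A_\alpha))$", but $f_\alpha$ is only assumed to be an $\cL$-isomorphism, not elementary (elementarity for such maps is \ref{prop:type PpC}, which is proved \emph{from} this lemma), so that set of formulas is not a priori consistent; what you can legitimately aim to realize is the quantifier-free $\cL$-type of $a$ plus its relative algebraic closure in $M$. But after finding $b$ by \ref{thm:PpCExisClosed}, you still need $\alg{\K(B_\alpha)(b)}\cap N$ to be $\Lrg(F_0)$-isomorphic, over $\K(B_\alpha)(b)$, to $\alg{\K(A_\alpha)(a)}\cap M$ — equivalently, that $b$ does \emph{not} acquire extra algebraic elements in $N$. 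This is a Boolean combination of existential conditions, not an existential one, so \ref{thm:PpCExisClosed} does not hand it to you, and the appeal to \ref{lem:desc alg} as making this step "routine" glosses over exactly where the difficulty lies. The paper avoids this entirely: its copy $M^\diamond\subseteq N$ is $\cL$-isomorphic to $M$, hence a model of $T$, so $\alg{\K(M^\diamond)}=\alg{F_0}\K(M^\diamond)$ by boundedness, and then regularity of $F_0\subseteq N$ gives $\alg{\K(M^\diamond)}\cap N=\K(M^\diamond)$ in one stroke. Your step-by-step approach cannot invoke "$B_\alpha\models T$" mid-induction, so you would need to argue the algebraic-closure match by hand (for instance, by choosing the realization $b$ generically along an invariant type, which pushes you back toward the paper's construction) — as written, this step is a genuine gap.
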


\begin{proof}
Let \(\fU_i\models\pCFG_i\) be a sufficiently saturated and homogeneous model that contains \(\Geom_i(M)\cup\Geom_i(N)\). Let \(A_i := \aclp(A)\), \(B_i := \aclp(B)\) and \(f_i : A_i \to B_i\) be an \(\cL_i\)-isomorphism extending \(f\). 
Also, for each \(i\), let \(p_i\in\Inv(\fU_i/B_i)\) extend \(\push{(f_i)}{\tpp(\K(M)/A_i)}= \{\phi(x, f_i(a))\mid \phi(x, a)\in\tpp(\K(M)/A_i)\}\). Let \(F_i \models \restr{p_i}{NA_i}\). Note that the fields \(F_i\) are all isomorphic as fields (over \(\K(N)\K(B)\)) so we may identify them. Let \(M^\star\) be the \(\cL\)-structure whose underlying field is \(F_i\) and whose \(\cL_i\)-structure is induced by \(\fU_i\). Note that there is an \(\cL\)-isomorphism between \(M\) and \(M^\star\) extending \(f\).

Note that since any automorphism of \(M\) extends to \(\fU_i\), \(\qftp_{\cL_i}(M^\star/N)\) is \(\aut[\cL](N/B)\)-invariant. It follows, by \ref{alg disj}, \(\K(M^\star)\) is algebraically independent from \(\K(N)\) over \(\K(B)\). Also, the extension \(\K(A)\substr\K(M)\) is regular, hence so are the extensions \(\K(B)\substr\K(M^\star)\) and \(\K(N)\substr \K(M^\star)\K(N)\). Since, \(\K(M^\star)\K(N)\substr\fU_i\), for all $i \leq n$, this extension is totally \(p\)-adic. Let \(N_0\subsel N\) be a small model containing \(B\). As \(N\) is pseudo \(p\)-adically closed, there exists \(M^\diamond \substr N\) which is \(\Lrg(N_0)\)-isomorphic to \(M^\star\). Since \(\cL\) is a definable enrichment of \(\Lrg(F_0)\), it follows that \(M^\diamond\) is \(\cL(B)\)-isomorphic to \(M^\star\). Composing with the \(\cL\)-isomorphism between \(M\) and \(M^\star\), we get an \(\cL\)-isomorphism \(g: M \to M^\diamond\) extending \(f\).

\begin{claim}
\(\alg{\K(M^\diamond)}\cap N = \K(M^\diamond)\)
\end{claim}

\begin{proof}
Note that \(M^\diamond\) is \(\cL\)-isomorphic to \(M\) and hence \(M^\diamond\models T\). It follows that \(\alg{\K(M^\diamond)} = \alg{F_0}\K(M^\diamond)\). Since \(F_0\substr N\) is regular, by Fact\,\ref{fct:full reg}, \(\alg{\K(M^\diamond)}\cap N = \alg{F_0}\K(M^\diamond) \cap N = \K(M^\diamond)\).
\end{proof}

This concludes the proof.
\end{proof}

\begin{proposition}[type PpC]
Let \(M\), \(N\models T\), \(A\subseteq M\), \(B\subseteq N\) and \(f : A\to B\) be an \(\cL\)-isomorphism. Assume that \(\alg{\K(A)}\cap M \subseteq A\) and \(\alg{\K(B)}\cap N \subseteq B\). Then \(f\) is elementary.
\end{proposition}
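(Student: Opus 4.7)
My plan is a standard back-and-forth argument driven by Lemma \ref{forth}.

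\emph{Reduction to saturated ambient models.} I would first observe that the algebraic closure hypothesis on \(A\) is preserved under elementary extensions. If \(M\prec \tilde M\) and \(\alpha\in\tilde M\) lies in \(\alg{\K(A)}\), then \(\alpha\) is a field element and satisfies some polynomial \(p(x)\in\K(A)[x]\); since \(\K(M)\prec\K(\tilde M)\) in the ring language (which is part of \(\cL\)), the finite set of roots of \(p\) realized in \(\K(\tilde M)\) is already realized in \(\K(M)\), so \(\alpha\in\K(M)\cap\alg{\K(A)}\subseteq A\). Since elementarity of \(f\) in \(\tilde M,\tilde N\) is equivalent to elementarity of \(f\) in \(M,N\), we may assume that \(M\) and \(N\) are \(\kappa\)-saturated for some \(\kappa>|A|+|\cL|\).

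\emph{The back-and-forth class.} Define \(\mathcal{F}\) to be the class of \(\cL\)-isomorphisms \(h:A'\to B'\) with \(A'\subseteq M\), \(B'\subseteq N\), \(|A'|<\kappa\), satisfying \(\alg{\K(A')}\cap M\subseteq A'\) and \(\alg{\K(B')}\cap N\subseteq B'\). The given \(f\) belongs to \(\mathcal{F}\), and it suffices to show \(\mathcal{F}\) has the back-and-forth property. For the forth step with \(h\in\mathcal{F}\) and \(a\in M\), apply L\"owenheim–Skolem to find \(M'\prec M\) with \(A'\cup\{a\}\subseteq M'\) and \(|M'|<\kappa\). The inclusion \(M'\subseteq M\) gives \(\alg{\K(A')}\cap M'\subseteq A'\), and \(N\) is \(|M'|^+\)-saturated by choice of \(\kappa\), so Lemma \ref{forth} yields an \(\cL\)-embedding \(g:M'\to N\) extending \(h\) with \(\alg{\K(g(M'))}\cap N\subseteq g(M')\). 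Moreover, the same elementarity argument as in the previous paragraph applied to \(\K(M')\prec\K(M)\) gives \(\alg{\K(M')}\cap M\subseteq \K(M')\subseteq M'\), so \(h':=g:M'\to g(M')\) lies in \(\mathcal{F}\) and has \(a\) in its domain. The back step is symmetric, using that \(M\) is \(\kappa\)-saturated and that \(g(M')\) carries the algebraic closure property in \(N\).

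\emph{Conclusion.} Once back-and-forth holds, the standard argument shows that every member of \(\mathcal{F}\) is elementary; in particular so is \(f\). The only points requiring care — preservation of the algebraic closure condition under elementary extensions and under passage to elementary substructures — follow cleanly from the fact that \(\K\) is a sort of \(\cL\) and the ring language is contained in \(\cL\). I therefore expect no serious obstacle beyond bookkeeping: the heavy lifting has already been done in Lemma \ref{forth}, which in turn relies on \ref{PpCExisClosed} together with \ref{alg disj}.
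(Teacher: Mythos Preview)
Your proposal is correct and follows the same route as the paper's proof: reduce to sufficiently saturated \(M\) and \(N\), and then observe that the collection of small \(\cL\)-isomorphisms satisfying the relative algebraic-closure conditions has the back-and-forth property by Lemma~\ref{forth}. You have simply made explicit the bookkeeping the paper leaves implicit---the L\"owenheim--Skolem step and the preservation of \(\alg{\K(A)}\cap M\subseteq A\) under passage to elementary extensions and substructures.
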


\begin{proof}
Assume \(M\) and \(N\) are sufficiently saturated. We proved in \ref{forth} that the set of \(\cL\)-isomorphisms between (small) \(A \subseteq M\) and \(B\subseteq N\) such that \(\alg{\K(A)}\cap M \subseteq A\) and  \(\alg{\K(B)}\cap N \subseteq B\) has the back-and-forth (if it is non-empty). It follows that any such isomorphism is elementary.
\end{proof}

Using the results of Section\,\ref{s:orth geom} and and the fact that the \(\cL\)-structure on the sort \(\K\) in \(T\) is a definable expansion of the ring language, we can improve this last result:

\begin{corollary}[desc types]
Let \(F_0\subseteq E\subseteq M\) and \(a\in M\) be a tuple, then \[\qftp_{\cL}(a/E) \cup\qftp_{\Lrg(F_0)}(\alg{\K(E)\K(a)}\cap M/\K(E))\vdash \tp(a/E).\]
\end{corollary}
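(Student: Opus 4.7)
The plan is to apply \ref{type PpC}. Given any $b \in M$ realizing the partial type on the left---so $\qftp_\cL(b/E) = \qftp_\cL(a/E)$ and some enumeration of $B' := \alg{\K(E)\K(b)} \cap M$ realizes the same $\qftp_{\Lrg(F_0)}$ over $\K(E)$ as the corresponding enumeration of $A' := \alg{\K(E)\K(a)} \cap M$, compatibly with $a_\K \mapsto b_\K$---I would construct an $\cL$-isomorphism $f \colon A \to B$ between $A := E \cup A'$ and $B := E \cup B'$ fixing $E$ pointwise and sending $a$ to $b$. Applying \ref{type PpC} (whose hypothesis $\alg{\K(A)} \cap M \subseteq A$ I would first verify using $\K(A) = A'$ together with $\alg{A'} \cap M \subseteq \alg{\K(E)\K(a)} \cap M = A'$, and similarly for $B$) will then upgrade $f$ to an elementary map, yielding $\tp_\cL(b/E) = \tp_\cL(a/E)$.

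To build $f$, I would set $f|_E = \mathrm{id}_E$ and $f|_{A'} = g$, where $g \colon A' \to B'$ is the $\Lrg(F_0)$-isomorphism over $\K(E)$ coming from the second hypothesis. Well-definedness on the overlap $E \cap A' = \K(E)$ is immediate, and the compatibility of the enumerations ensures $g$ extends $a_\K \mapsto b_\K$. By \ref{Lrg to L}, $g$ is automatically an $\cL$-isomorphism between $A'$ and $B'$, so atomic $\cL$-formulas with parameters in $A'$ are preserved. Atomic formulas over $E$ are preserved by the identity, and those involving only $E \cup \{a\}$ are preserved because the first hypothesis provides an $\cL$-isomorphism $E \cup \{a\} \to E \cup \{b\}$ that agrees with $f$.

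The hard part will be checking that $f$ preserves the remaining mixed atomic $\cL$-formulas, namely those involving geometric parameters from $E$ together with field elements of $A' \setminus (\K(E) \cup a_\K)$. I expect to handle this case by leveraging the description of the algebraic closure from Section\,\ref{s:acl}: any geometric element $e \in E$ entering such an atomic relation with some $c \in A'$ must lie in $\aclp(\K(E) \cup a_\K)$ by \ref{descr acl}, since $c$ is itself algebraic over $\K(E)(a_\K)$. This should allow the mixed formula to be rewritten, using \ref{Lrg to L} on an appropriate field enlargement and the orthogonality results of Section\,\ref{s:orth geom}, as a formula over $E \cup \{a\}$, whose preservation is then delivered by the first hypothesis.
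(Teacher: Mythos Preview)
Your overall strategy---merge the two partial isomorphisms into a single $\cL$-isomorphism on $E\cup A'$ and then invoke \ref{type PpC}---is different from the paper's, and the gap lies exactly where you flagged it: the ``hard part''. Your proposed mechanism for it does not work. The claim that a geometric $e\in\Geomim_i(E)$ appearing in an atomic relation with some $c\in A'$ must lie in $\aclp(\K(E)\cup a_\K)$ is not what \ref{descr acl} says; that corollary bounds $\acl$ of a parameter set inside $\bigcup_i\aclp(\Geom_i(-))$, it says nothing about an arbitrary $e\in E$ becoming $\cL_i$-algebraic over field data just because some atomic formula links it to $c$. For instance, $\val_i(c)<e$ is such a formula and imposes no algebraicity on $e$ whatsoever. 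Even for formulas like $s_m(c)=e$, which do force $e\in\dclp(A')$, the subsequent step---rewriting the mixed formula as one over $E\cup\{a\}$---is not justified: $c$ is only \emph{algebraic} over $\K(E)(a)$, not definable, and the particular Galois conjugate of $c$ that $g$ picks out need not be the one any automorphism extending $Ea\to Eb$ would choose. So verifying that your merged $f$ respects all mixed atomic $\cL_i$-formulas is a genuine problem, and the handwave toward \ref{descr acl} and orthogonality does not address it.

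The paper avoids this entirely by never attempting to merge. It applies \ref{type PpC} only to $g$ on the field part $A'$ (where the hypothesis is immediate), extends $g$ to an automorphism $\sigma$ of $M$, and sets $C:=\sigma(Ea)$. Then $\K(C)=\K(Eb)$, and since $\sigma$ and the original $\cL$-isomorphism $Ea\to Eb$ are both $\cL$-isomorphisms, the geometric tuples $\Geomim(C)$ and $\Geomim(Eb)$ have the same \emph{quantifier-free} $\cL$-type over $\K(Eb)$. \ref{orth geom} then upgrades this to equality of full $\cL$-types over $\K(Eb)$, giving $C\equiv_\cL Eb$ and hence $Ea\equiv_\cL Eb$. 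Orthogonality is thus used once, cleanly, to compare two geometric tuples over a common field base---not to rewrite individual atomic formulas.
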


\begin{proof}
It suffices to prove that if we have \(A, B\subseteq M\) containing \(F_0\),\(f:A\to B\) an \(\cL\)-isomorphism  and \(g:\alg{\K(A)}\cap M\to\alg{\K(B)}\cap M\) an \(\Lrg(F_0)\)-isomorphism such that \(\restr{f}{\K(A)} = \restr{g}{\K(A)}\), then \(f\) is an elementary \(\cL\)-isomorphism. By \ref{Lrg to L}, \(g\) is an \(\cL\)-isomorphism. By \ref{type PpC}, \(g\) is an elementary \(\cL\)-isomorphism which extends to \(\sigma\in\aut[\cL](M)\). Let \(C := \sigma(A)\). It suffices to prove that \(C\equivL B\). We know that \(\K(C) = \K(B)\) so it suffices to prove that \(\Geomim(C)\equivL[\K(B)] \Geomim(B)\), where \(\Geomim\) is the set of all \(\cL\)-sorts except for \(\K\). By hypothesis, \(C\qfequiv[\cL] A \qfequiv[\cL] B\) so \(\Geomim(C)\qfequiv[\cL(\K(B))]\Geomim(B)\). But, by \ref{orth geom}, this is equivalent to \(\Geomim(C)\equivL[\K(B)]\Geomim(B)\).
\end{proof}

\begin{lemma}[p-adic amalg]
Let \(L_1\), \(L_2\models\pCF_{\forall}\) and \(k_0\) be a common $\LP$-substructure of \(L_1\) and \(L_2\) such that \(L_1\) and \(L_2\) are linearly disjoint over \(k_0\). Then \(L_1L_2\) can be made into an \(\LP\)-structure extending that \(L_1\) and \(L_2\) such that \(L_1L_2\models\pCF_{\forall}\).
\end{lemma}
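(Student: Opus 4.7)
The plan is to amalgamate $L_1$ and $L_2$ as $\LP$-substructures of a sufficiently saturated $\Omega\models\pCF$, so that the field-theoretic compositum of their images inside $\Omega$ is canonically isomorphic to the abstract compositum $L_1L_2 = \mathrm{Frac}(L_1\otimes_{k_0} L_2)$ and inherits the required $\LP$-structure. First, embed $L_1$ as an $\LP$-substructure of such an $\Omega$: since $L_1\models\pCF_\forall$, it embeds into its $p$-adic closure $K_1\models\pCF$, which in turn embeds into $\Omega$ over $k_0$ by quantifier elimination of $\pCF$ in $\LP$ together with saturation. Identify $L_1$ with its image.

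The core step is to find an $\LP$-embedding $j\colon L_2\to\Omega$ over $k_0$ whose image is linearly disjoint from $L_1$ over $k_0$ inside $\Omega$. I will realize in $\Omega$ the partial $\LP$-type $\Sigma$ over $L_1$, in variables $(x_a)_{a\in L_2}$, consisting of: (i) the atomic $\LP$-diagram of $L_2$ over $k_0$; (ii) for every $k_0$-linearly independent finite tuple $(a_1,\ldots,a_n)$ from $L_2$ and every nonzero $(c_1,\ldots,c_n)\in L_1^n$, the inequation $c_1 x_{a_1}+\cdots+c_n x_{a_n}\neq 0$. A realization of $\Sigma$ supplies the desired $j$; by linear disjointness, the compositum $L_1\cdot j(L_2)\subseteq\Omega$ is canonically isomorphic to $L_1L_2$, and pulling the $\LP$-structure back endows $L_1L_2$ with an $\LP$-structure extending both. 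Since $L_1L_2\hookrightarrow\Omega\models\pCF$ as an $\LP$-substructure, $L_1L_2\models\pCF_\forall$.

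Consistency of $\Sigma$ reduces, by saturation, to finite satisfiability. A finite fragment involves a finite tuple $\bar a\in L_2$ and finitely many $L_1$-linear inequations. Decompose $\bar a = (\bar a_t,\bar a_r)$ with $\bar a_t$ a transcendence basis of $k_0(\bar a)/k_0$ and $\bar a_r$ algebraic over $k_0(\bar a_t)$. First realize the quantifier-free $\LP$-type of $\bar a_t$ over $k_0$ by a tuple $\bar b_t\in\Omega$ algebraically independent from $L_1$ over $k_0$: this is possible because the realization set has full transcendence degree in $\Omega^{|\bar a_t|}$, so by saturation of $\Omega$ over $L_1$ it meets the complement of any proper $L_1$-subvariety. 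Then realize $\bar a_r$ by an algebraic $\bar b_r$ over $k_0(\bar b_t)$ with the correct quantifier-free $\LP$-type, using Henselianity of $\Omega$ and quantifier elimination of $\pCF$. The given linear disjointness of $L_1, L_2$ over $k_0$ implies that the minimal polynomial of $\bar a_r$ over $k_0(\bar a_t)$ remains irreducible over $L_1(\bar a_t)$ inside $L_1L_2$; via the isomorphism $\bar a_t\mapsto \bar b_t$, this transfers to irreducibility of the corresponding polynomial over $L_1(\bar b_t)$, so $k_0(\bar b_t,\bar b_r)$ is linearly disjoint from $L_1$ over $k_0$, and the inequations of the fragment hold.

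The main obstacle is the finite-satisfiability step: one must simultaneously achieve the full quantifier-free $\LP$-type of $\bar a$, which constrains valuations and $m$-th power predicates on polynomial expressions, \emph{and} the linear disjointness from $L_1$. The delicate ingredient is transporting the irreducibility of the minimal polynomial of $\bar a_r$ from the abstract amalgam $L_1L_2$ to the chosen realization in $\Omega$; here the linear disjointness hypothesis is used essentially, together with separability of algebraic extensions in characteristic $0$. Once this transfer is in hand, gluing the transcendental and algebraic parts to produce $\bar b$ is routine.
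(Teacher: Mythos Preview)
Your approach is correct and essentially the same as the paper's: embed into a saturated model of \(\pCF\) and use the transcendental/algebraic dichotomy, relying on algebraic boundedness (in the transcendental case) and preservation of minimal polynomials under linear disjointness (in the algebraic case). The paper packages this as a one-generator-at-a-time induction rather than a compactness argument with a transcendence-basis decomposition, which makes the write-up considerably shorter, but the underlying idea is the same.
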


\begin{proof}
Since \(\pCF\) eliminates quantifiers in \(\LP\), there exists
\(\fU\models pCF\) containing both \(L_1\) and \(L_2\) as
\(\LP\)-substructures. By induction, it suffices to consider the case
where \(L_1 = k_0(a)\). If \(a\) is algebraic over \(k_0\), then the
minimal polynomial of \(a\) over \(k_0\) and \(L_2\) coincide. So
\(k_0(a)\) and \(L_2\) are linearly independent over \(k_0\) (as
subfields of \(\fU\)) and we are done. If \(a\) is transcendental over \(k_0\), let \(c\in \fU\sminus
\alg{L_2}\) realize \(\tp[\LP](a/k_0)\). Then \(k_0(c)\) is linearly
independent from \(L_2\) over \(k_0\) and we are also done.
\end{proof}

\begin{remark}
The above proof is not really about \(p\)-adically closed fields. It
holds of any theory of (enriched) fields that eliminates quantifiers
and is algebraically bounded.
\end{remark}

\begin{theorem}[indep]
Let \(M\models T\), \(E \subseteq M\) and \(a_1\),\(a_2\), \(c_1\), \(c_2\), \(c\in \K(M)\) be such that \(\alg{\K(E)} \cap M \subseteq E\), \(\alg{\K(E)(a_1)}\cap\alg{\K(E)(a_2)} = \alg{\K(E)}\), \(c\qfindep_{E}a_1a_2\), \(c_1 \equivL[E] c_2\), \(c\qfequiv[\cL(Ea_1)] c_1\) and \(c\qfequiv[\cL(Ea_2)] c_2\). Then \(\tp(c_1/Ea_1)\cup\tp(c_2/Ea_2)\cup\qftp_{\cL}(c/Ea_1a_2)\) is satisfiable.
\end{theorem}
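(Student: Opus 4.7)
The plan is to construct $c^* \in \K(M)$ realizing the combined type in three steps: first realize the $\aut(M/E)$-invariant quantifier-free $\cL$-type witnessing $c \qfindep_E a_1a_2$ in a sufficiently saturated elementary extension of $M$, then upgrade the resulting quantifier-free $\cL$-equivalences to full $\cL$-type equivalences via \ref{type PpC}, and finally transfer the realization back into $M$ by saturation.

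First, by $c \qfindep_E a_1a_2$, fix a quantifier-free $\aut(M/E)$-invariant $\cL$-type $p$ over $M$ with $c \models p|_{Ea_1a_2}$, and pick $c^\diamond \models p$ in some sufficiently saturated $\cL$-elementary extension $N \supsel M$. Then $c^\diamond \qfequiv[\cL(Ea_1a_2)] c$ by construction, so together with the hypotheses $c \qfequiv[\cL(Ea_i)] c_i$ we obtain $c^\diamond \qfequiv[\cL(Ea_i)] c_i$ for $i = 1, 2$. Moreover, the $\aut(M/E)$-invariance of each $\cL_i$-reduct of $p$ combined with \ref{alg disj} ensures that $\K(Ea_1a_2c^\diamond)$ is algebraically disjoint from $\K(M)$ over $\K(Ea_1a_2)$, which will be used to control algebraic extensions in Step~2.

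Second, upgrade each equivalence $c^\diamond \qfequiv[\cL(Ea_i)] c_i$ to the full type equivalence $c^\diamond \equivL[Ea_i] c_i$ via \ref{type PpC}, applied to the $\cL$-isomorphism between $A_i := \acl(Ea_ic^\diamond) \subseteq N$ and $B_i := \acl(Ea_ic_i) \subseteq M$ fixing $Ea_i$ and sending $c^\diamond \mapsto c_i$. The algebraic-closure hypothesis $\alg{\K(A_i)}\cap N \subseteq A_i$ (and similarly for $B_i \subseteq M$) holds by construction: any field-algebraic element over $\K(A_i)\subseteq A_i$ satisfies a polynomial with finitely many roots, hence belongs to $\acl(A_i) = A_i$ by idempotence, and then to $\K(A_i)$. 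The real content is producing the $\cL$-iso $A_i \to B_i$ extending the natural field isomorphism that comes from the quantifier-free $\cL$-equivalence. The description of $\acl$ from Section~\ref{s:acl} (in particular \ref{descr acl} and \ref{inv acl cor}) together with the invariance of $p$ controls how the algebraic elements (both in $\K$ and in the geometric sorts) on either side extend coherently across the iso.

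Third, by saturation of $M$, the complete $\cL$-type $\tp(c^\diamond / Ea_1a_2)$ is realized in $\K(M)$ by some $c^* \in \K(M)$. Then $c^* \equivL[Ea_1a_2] c^\diamond$, which simultaneously gives $c^* \qfequiv[\cL(Ea_1a_2)] c$ (chasing through $c^\diamond$) and $c^* \equivL[Ea_i] c_i$ for $i=1,2$ (chasing through $c^\diamond$ and applying Step~2), yielding the required realization.

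The main obstacle is Step~2: producing the $\cL$-isomorphism between the $\acl$-closed structures $A_i$ and $B_i$. This requires matching algebraic elements coherently across the field-level iso, and is precisely where the hypothesis $\alg{\K(E)(a_1)}\cap\alg{\K(E)(a_2)} = \alg{\K(E)}$ enters, ensuring that the $i=1$ and $i=2$ constructions are compatible, while $\alg{\K(E)}\cap M \subseteq E$ guarantees that the base $E$ is sufficiently closed for the inductive extension of the iso to go through.
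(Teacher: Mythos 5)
Your overall framing — pick a realization $c^\diamond$ of the invariant quantifier-free type, note that it inherits the quantifier-free equivalences $c^\diamond \qfequiv[\cL(Ea_i)] c_i$, then try to upgrade to full $\cL$-type equivalence via \ref{type PpC} — is a reasonable reduction, and your Step~3 (realizing $\tp(c^\diamond/Ea_1a_2)$ by saturation) is sound but in fact unnecessary, since the theorem asks only for satisfiability. The problem is Step~2, which you correctly identify as ``the main obstacle'' and then do not actually carry out. The claim that an $\cL$-isomorphism $\acl(Ea_ic^\diamond) \to \acl(Ea_ic_i)$ extending $c^\diamond \mapsto c_i$ exists is not a consequence of \ref{descr acl}, \ref{inv acl cor}, and the invariance of $p$ as you assert; it is, essentially, the content of the theorem.

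Here is the concrete difficulty. By \ref{desc types}, $\tp(c^\diamond/Ea_i)$ is determined by $\qftp_{\cL}(c^\diamond/Ea_i)$ together with the $\Lrg(F_0)$-type over $\K(Ea_i)$ of the relative algebraic closure $\alg{\K(Ea_ic^\diamond)}\cap \K(N)$. You have matched the first ingredient, but the second is a property of the ambient model $N$ and of the particular realization $c^\diamond$, and it is \emph{not} controlled by the quantifier-free $\cL$-type. Picking $c^\diamond \models p$ in an arbitrary $N \supsel M$ gives no guarantee that the finite algebraic extensions of $\K(Ea_ic^\diamond)$ sitting inside $\K(N)$ match, under the field isomorphism $c^\diamond \mapsto c_i$, those of $\K(Ea_ic_i)$ inside $\K(M)$ — and without that, \ref{type PpC} cannot be applied because there is no $\cL$-isomorphism between the $\acl$-closed structures to feed it. What the paper actually does is \emph{build} the correct algebraic closure by hand: it constructs abstract fields $D_1$, $D_2$ (isomorphic copies of $\alg{A_jC_j}\cap M$ over $A_jC$), proves via \cite[Lemma\,2.5.(2)]{Cha-OmPAC} and a chain of regularity computations that $F \substr D_1D_2F$ and $M \substr D_1D_2\K(M)$ are regular, endows $D_1D_2F$ with a coherent totally $p$-adic $\cL$-structure via repeated use of \ref{p-adic amalg}, and then invokes existential closedness (\ref{PpCExisClosed}) to embed all of this into a fresh model $M^\star \supsel M$ in which the relative algebraic closure of $\K(Ea_jc)$ has the desired shape. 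Only then does \ref{desc types} yield $c \equivL[Ea_j] c_j$. Note also that the hypothesis $\alg{\K(E)(a_1)}\cap\alg{\K(E)(a_2)} = \alg{\K(E)}$ is used precisely in those regularity computations to make the joint amalgamation of $D_1$ and $D_2$ over $F$ possible; in your proposal it appears only as a nominal remark, but your argument as written never actually makes use of it, which is a strong signal that the hard part has been skipped.
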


This is very similar to a result proved by the first author \cite[Theorem\,6.13]{Mon-NTP2}. The main difference is that in \ref{indep}, \(E\) is allowed to contain geometric points.

\begin{proof}
Let \(p\) be a quantifier free \(\aut[\cL](M/E)\)-invariant type extending the quantifier free type of \(c\) over \(E a_1 a_2\). Choosing \(c\models p\) in some \(N\supsel M\), we may assume that \(c\qfindep_{E} M\).  By \ref{alg disj}, \(\K(E)(c)\) is algebraically disjoint from \(\K(M)\) over \(\K(E)\). Let \(A_j := \alg{\K(E)a_j}\cap M\), \(C_j := \alg{\K(E)c_j}\cap M\), \(C := \alg{\K(E)c}\cap M\), \(F = \alg{A_1A_2}\cap M\) and \(B_j := \alg{A_jC_j}\cap M\). Since \(c\qfequiv[\cL(Ea_j)] c_j\) and \(\aclp = \dclp\), we have that \(C\qfequiv[\cL(A_j)] C_j\). So there exists an \(\cL(A_j)\)-isomorphism \(\phi_j:A_jC_j \to A_jC\) sending \(C_j\) to \(C\). We can extend this isomorphism to an \(\Lrg(F_0)\)-isomorphism \(\alg{\phi}_j:\alg{A_jC_j} \to \alg{A_jC}\). Let \(D_j := \alg{\phi}_j(B_j)\). A picture of the involved fields might help:

\[\xymatrix{
B_1\ar[r]^{\alg{\phi}_1}&D_1&F&D_2&B_2\ar[l]_{\alg{\phi}_2}\\
A_1C_1\ar@{-}[u]\ar[r]^{\phi_1}&A_1C\ar@{-}[u]&A_1A_2\ar@{-}[u]&A_2C\ar@{-}[u]&A_2C_2\ar@{-}[u]\ar[l]_{\phi_2}\\
C_1\ar@{-}[u]&A_1\ar@{-}[u]\ar@{-}[ul]\ar@{-}[ur]|!{[u];[r]}\hole&C\ar@{-}[ul]\ar@{-}[ur]&A_2\ar@{-}[ul]|!{[u];[l]}\hole\ar@{-}[u]\ar@{-}[ur]&C_2\ar@{-}[u]\\
\K(E)(c_1)\ar@{-}[u]&\K(E)(a_1)\ar@{-}[u]&\K(E)(c)\ar@{-}[u]&\K(E)(a_2)\ar@{-}[u]&\K(E)(c_2)\ar@{-}[u]\\
&&\K(E)\ar@{-}[ull]\ar@{-}[ul]\ar@{-}[u]\ar@{-}[ur]\ar@{-}[urr]
}\]

By \cite[Lemma 2.5.(2)]{Cha-OmPAC},
\[\alg{CA_1}\cap\alg{CA_2}\alg{A_1A_2} = \alg{\alg{C}(\alg{A_1}\cap\alg{A_2})}\alg{A_1} = \alg{C}\alg{A_1} = \alg{\K(E)}CA_1.\] The last equality follows from \ref{desc alg}. Since \(\K(E)\substr C_2F\substr \K(M)\) is regular, it follows \(\K(E)\substr D_2F\) is regular and hence that \(D_2F\) is linearly disjoint from \(\alg{\K(E)}CA_1\) over \(CA_1\). In conclusion, we have that \(\alg{CA_1}\cap D_2F = \alg{\K(E)}CA_1\cap D_2F = CA_1\), i.e. the extension \(CA_1\substr D_2F\) is regular. By a symmetric argument, the extension \(CA_2\substr D_1F\) is also regular. Since \(D_2F\) is linearly disjoint from \(D_1\substr \alg{CA_1}\) over \(CA_1\) and the extension \(CA_1\substr D_2F\) is regular, then so is the extension \(D_2F\substr D_1D_2F\). Since \(\K(E)\substr F\) is regular, it follows that \(D_1D_2F\) is linearly disjoint from \(\alg{F} = \alg{\K(E)}F\) over \(F\), i.e. the extension \(F\substr D_1D_2F\) is regular.

Note that \(CF \subseteq N\) can be made into a model of \(\bigcup_i\pCF_{i,\forall}\). Also, \(B_j\) can be made into a model of \(\bigcup_i\pCF_{i,\forall}\) and hence so does \(D_j\) by transfert. Note that since \(\phi_j\) is an \(\cL\)-isomorphism, the \(\cL_i\)-structures induced by \(D_j\) and \(CF\) on \(CA_j\) coincide. Note also, that since \(CA_j\substr D_lF\), where \(l\neq j\) is regular, \(D_j\) and \(CF\) are linearly disjoint over \(CA_j\). By \ref{p-adic amalg}, it follows that \(D_jF\) can be made into a model of \(\pCF_{i,\forall}\) whose \(\cL_i\)-structure extends that of \(CF\). Since \(CA_1\substr D_2F\) is regular, we also have that \(D_1F\) and \(D_2F\) are linearly disjoint over \(CF\). By \ref{p-adic amalg} again, \(D_1D_2F\) can be made into a model of \(\pCF_{i,\forall}\) whose \(\cL_i\)-structure extends that of \(F\).

Recall that \(\K(E)(c)\) is algebraically disjoint from \(\K(M)\) over \(\K(E)\). It follows that \(CF\) is algebraically disjoint from \(\K(M)\) over \(F\) and thus that \(D_1D_2F\subseteq \alg{CF}\) is algebraically disjoint from \(\K(M)\) over \(F\). Since \(F\substr \K(M)\) is regular, \(D_1D_2F\subseteq \alg{CF}\) and \(\K(M)\) are in fact linearly disjoint over \(F\) and, since \(F\substr D_1D_2F\) is also regular, \(M\substr D_1D_2\K(M)\) is regular. Moreover, by \ref{p-adic amalg}, \(D_1D_2M\) can be made into a model of \(\pCF_{i,\forall}\) for all \(i\), i.e. the extension if totally \(p\)-adic. Since \(M\models \PpC[n]\), by \ref{PpCExisClosed}, there exists \(M^\star\supsel M\) containing \(D_1D_2F\) as an \(\cL\)-substructure.

Since \(\alg{B_j} = \alg{\K(E)}B_j\), we also have \(\alg{D_j} = \alg{\K(E)}B_j\). As \(\K(E)\) is regular in \(\K(M^\star)\), it then follows that \(\alg{D_j}\cap M^\star = D_j\). Note that, since the \(\cL\)-structure on \(M(c)\) coincides with the one we built on \(D_1D_2F\) and hence with the one in \(N(c)\), \(\qftp_{\cL}^{M^\star}(c/Ea_1a_2) = \qftp_{\cL}^{N}(c/Ea_1a_2)\) has not changed and we still have that \(c \qfequiv[\cL(Ea_j)]c_j\). By construction, we also have \(D_j \qfequiv[\cL(A_j)] B_j\). It follows, from \ref{desc types}, that \(c\equivL[Ea_j] c_j\).
\end{proof}

\subsection{Finite sets}\label{s:finite}

Our goal, in this section, is to prove that finite sets are coded. We first prove that finite sets are coded in the algebraic closure of a \(\PpC[n]\) field equipped with extensions of the geometric language for each valuation and then we conclude with \ref{descent}. The proof technique is inspired by Johnson's account of elimination of imaginaries in algebraically closed valued fields \cite[\S 6.2]{Joh-EIACVF}.

In what follows, let \(F\) be an algebraically closed field and \(\val_i\), \(1\leq i \leq n\), be \(n\) independent valuations on \(F\). Recall \ref{main not}. Let \(\Val_i\) also denote the valuation ring for \(\val_i\) and \(\Val = \bigcap_i\Val_i\). Let \(\qfLG := \bigcup_i \bcL_i\). The field \(F\) can naturally be made into an \(\qfLG\)-structure.

A quantifier free \(\qfLG\)-type \(p(x)\) over \(F\) is said to be \emph{definable} if for each quantifier free \(\bcL_i\)-formula \(\phi(x, y)\), there exists a quantifier free \(\bcL_i\)-formula \(\theta(y)\) such that \(\phi(x, a)\in p\) if and only if \(\models\theta(a)\). If \(p(x)\) and \(q\) are quantifier free \(\qfLG\)-types over \(F\), then we define \(p\tensor q\) to be the quantifier free definable \(\qfLG\)-type of tuples \(ab\) where \(a\models p\) and \(b\models \restr{q}{Fa}\). It is also a quantifier free definable \(\qfLG\)-type. The type \(p\) is said to be \emph{symmetric} if for any quantifier free definable \(\qfLG\)-type \(q\), \(p\tensor q = q\tensor p\), i.e. if \(a\models p\) and \(b\models\restr{q}{Fb}\), then \(a\models\restr{p}{Fb}\). This happens if and only if, for all \(i\), \(\restr{p}{\bcL_i}\) is a symmetric definable type in \(\ACVFG_i\).

\begin{lemma}[lift st dom]
Pick any \(s\in\Latt{m}(F) := \GL{m}(F)/\GL{m}(\Val)\). There exists a
quantifier free symmetric \(s\)-definable \(\qfLG\)-type \(q_s\) such that
\(q_s(x)\vdash x\in s\).
\end{lemma}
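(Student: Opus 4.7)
My plan is to build $q_s$ as the union of the lattice-generic types coming from each valuation separately, and then use weak approximation for the independent valuations to glue them into a consistent quantifier-free $\qfLG$-type.

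First, I would fix a representative $g \in s$ and, for each $i \leq n$, consider $s_i := g \cdot \GL{m}(\Val_i) \in \Latt[i]{m}(F)$; note that $s_i$ depends only on $s$ and not on the choice of $g$, since $\GL{m}(\Val) \subseteq \GL{m}(\Val_i)$. Next I would take $p_i(x)$ to be the standard lattice-generic type of $s_i$ in $\ACVFG_i$: the quantifier-free $\bcL_i(s_i)$-definable type concentrated on matrices $x \in s_i$ whose reduction $\overline{g^{-1}x}$ forms a generic basis of $\res_i^m$. By \cite{HasHruMac-ACVF}, the type $p_i$ is stably dominated in $\ACVFG_i$ and in particular symmetric.

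The central step is to set $q_s := \bigcup_i p_i$ and verify that this is a consistent complete quantifier-free $\qfLG$-type. For consistency, given a finite conjunction $\bigwedge_{i,k} \phi_{i,k}(x)$ of formulas from $\bigcup_i p_i$, each $\phi_{i,k}$ cuts out a nonempty $i$-open subset of $s_i$, hence of $s = \bigcap_i s_i$ itself; since the valuations $\val_1,\ldots,\val_n$ are pairwise independent, weak approximation yields an element of $F$ realizing the whole conjunction. Completeness and quantifier-free $s$-definability of $q_s$ then follow formally from the fact that every quantifier-free $\qfLG$-formula in a field-sorted variable is a Boolean combination of quantifier-free $\bcL_i$-formulas, each decided by the corresponding $p_i$ through its $\bcL_i(s_i)$-definable defining scheme, and $s_i$ is itself $\qfLG$-definable from $s$. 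Finally, symmetry of $q_s$ reduces, by the criterion stated just before the lemma, to symmetry of each $\restr{q_s}{\bcL_i} = p_i$, which has already been established.

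The hard part is the consistency of $\bigcup_i p_i$: it relies on a weak-approximation statement for $n$ independent valuations on the algebraically closed field $F$. Once that is in place, the remaining steps reduce cleanly to standard facts from the model theory of $\ACVF$ and elementary Boolean manipulations in the language $\qfLG$.
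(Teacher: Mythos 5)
Your proof is correct and follows essentially the same route as the paper's: in both cases one takes, for each $i$, the $\bcL_i$-type obtained by lifting the generic type of $\GL{m}$ over the $i$-th residue field (the paper forms this type on $\GL{m}(\Val)$ and then translates by a representative $M\in s(F)$; you translate by $g$ from the outset), invokes stable domination / generic stability for symmetry of each $\bcL_i$-piece, and uses independence of the valuations to glue the pieces into a consistent $\qfLG$-type. The paper cites Hrushovski--Rideau for the symmetry of the lifted type rather than \cite{HasHruMac-ACVF}, and the translation-from-$\GL{m}(\Val)$ presentation has the small advantage of making the independence from the choice of representative immediate, but these are cosmetic differences.
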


\begin{proof}
Let \(k_i\) be the residue field for the valuation \(\val_i\) and \(p_i\) be the generic type of \(\GL{m}(k_i)\). Let \(q_i\) be the type of matrices in \(\GL{m}(\Val)\) whose reduct modulo \(\Mid\) realizes \(p_i\). By \cite[Lemma\,6.7 and Proposition\,2.10,(1)]{HruRid-Meta}, \(q_i\) is a symmetric definable type. Note that, by independence of the valuations, \(q := \bigcup_i q_i\) is consistent and also that, since \(q_i\) is invariant by multiplication in \(\GL{m}(\Val_i)\), \(q\) is invariant by multiplication in \(\GL{m}(\Val)\).

Pick any \(M\in s(F)\). The quantifier free definable \(\qfLG\)-type \(q_s\) of elements \(MN\) for some \(N\models q\) does not depend on the choice of \(M\) and it is quantifier free symmetric \(s\)-definable.
\end{proof}

\begin{lemma}[code fin AC]
Let \(C\) be a finite subset of \(\Latt{m}(F)\times F^l\) for some \(m\) and \(l\). Then \(C\) is coded in \(\Geom := \bigcup_i \Geom_i\).
\end{lemma}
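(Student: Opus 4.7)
The plan is to reduce the coding of $C$ to the coding of a quantifier-free definable $\qfLG$-type, by using the symmetric $\qfLG$-types of \ref{lift st dom} to lift each $s_j \in \Latt{m}$ to a generic realization in the field sort. Once $C$ has been recast (essentially) as the canonical base of such a type, a code in $\Geom$ can be extracted by applying elimination of imaginaries separately in each $\ACVFG_i$.

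Concretely, I would enumerate $C = \{(s_j, a_j) : 1 \leq j \leq k\}$, with the pairs assumed distinct, and for each $j$ set $p_j(x, y) := q_{s_j}(x) \cup \{y = a_j\}$; this is a symmetric, $(s_j, a_j)$-definable quantifier-free $\qfLG$-type satisfying $p_j \vdash x \in s_j$. Form the tensor product $p := p_1\tensor\cdots\tensor p_k$, a quantifier-free definable type in the variables $(x_1, y_1, \ldots, x_k, y_k)$. By symmetry of the $p_j$'s, permuting the enumeration only changes $p$ by the corresponding permutation of variable blocks, so $p$ descends to a well-defined quantifier-free definable type $\tilde{p}$ on the $k$-th symmetric power of $\K^{m^2+l}$, which depends only on $C$.

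The type $\tilde{p}$ is then interdefinable with $C$: it is $C$-definable by construction, while conversely any realization $\{(b_1, a_1), \ldots, (b_k, a_k)\}$ of $\tilde{p}$ recovers $C$ as $\{(s_m(b_j), a_j)\}$, using that $q_{s_j} \vdash x \in s_j$ forces $s_m(b_j) = s_j$. Hence it suffices to code $\tilde{p}$. For each $i \leq n$, the restriction $\restr{\tilde{p}}{\bcL_i}$ is a quantifier-free definable type in $\ACVFG_i$ and so, by elimination of imaginaries in $\ACVFG_i$, admits a canonical base $c_i \in \Geom_i$. Since any quantifier-free $\qfLG$-formula is a boolean combination of quantifier-free $\bcL_i$-formulas for various $i$, an automorphism of $F$ fixes $\tilde{p}$ if and only if it fixes each $\restr{\tilde{p}}{\bcL_i}$, if and only if it fixes every $c_i$. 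Hence $c := (c_1, \ldots, c_n) \in \Geom$ codes $\tilde{p}$, and therefore codes $C$.

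The main subtlety is the descent of the tensor product $p$ to a single symmetric type $\tilde{p}$ that depends only on the set $C$ and not on its enumeration: this is precisely what the symmetry of each $q_{s_j}$ (and the fact that the $a_j$ are constants) is designed to guarantee. The remainder is straightforward bookkeeping across the reducts to each $\bcL_i$, leveraging elimination of imaginaries in the individual $\ACVFG_i$ and the quantifier-free nature of $\tilde{p}$.
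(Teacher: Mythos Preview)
Your proposal is correct and follows essentially the same approach as the paper: lift each element of $C$ via the symmetric types of \ref{lift st dom}, tensor them together, use symmetry to make the result independent of the enumeration of $C$, and then code the resulting quantifier-free definable type by coding its restriction to each $\bcL_i$ via elimination of imaginaries in $\ACVFG_i$. The only cosmetic difference is that where you speak of descending to the $k$-th symmetric power of $\K^{m^2+l}$, the paper pushes the tensor product forward along an explicit $\emptyset$-definable map $s$ sending a size-$|C|$ subset of $\K^{m^2+l}$ to its code in some $\K^N$; these amount to the same thing.
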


\begin{proof}
By \ref{lift st dom}, for all \(c\in C\), there exists a quantifier free symmetric \(c\)-definable type \(p_c\) concentrating on \(c\), i.e. if \(c = c_1c_2\) with \(c_2\in \K^l\), then \(p_c(x_1,x_2)\models x_1 \in c_1\wedge x_2 = c_2\). Let \(s\) be the definable map sending a finite subset of \(\K^{\card{x_1}+\card{x_2}}\) of size \(\card{C}\) to its code. Note that we can choose the image of \(s\) to be a subset of some \(\K^{m}\). Let \(p_C = \push{s}{(\bigotimes_{c\in C} p_c)}\). Note that since each \(p_c\) is symmetric, the type \(p_C\) does not depend on a choice of enumeration for \(C\). So \(p_C\) is a \(\code{C}\)-definable quantifier free type and \(C\) is \(\bcL(\code{p_C})\)-definable. By elimination of imaginaries in \(\ACVF_i\), each \(\restr{p_C}{\bcL_i}\) has a canonical basis in \(\Geom_i\) and since \(p_C = \bigcup_i \restr{p_C}{\bcL_i}\), \(p_C\), and hence \(C\), is \(\Geom(\code{C})\)-definable.
\end{proof}

\begin{corollary}[code fin]
Let \(M\models T\), and \(C\) be a finite subset of \(\Geom^k(M)\). Then \(C\) is coded in \(\Geom\).
\end{corollary}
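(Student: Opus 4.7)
The strategy is to reduce to \ref{code fin AC} by working in the multi-valued algebraic closure of \(M\) and then descending the resulting code to \(M\) via \ref{descent}. Let \(\tilde M\) denote the field \(\alg{\K(M)}\) equipped with fixed extensions of each \(v_i\), viewed as an \(\qfLG\)-structure containing \(M\); note that \(\Geom_i(\tilde M) = \Geom_i(\aM{i})\) and that \(\tilde M\) is of the kind considered in \ref{code fin AC}.

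The first step is to adapt the argument of \ref{code fin AC} to produce a code for finite subsets of \(\Geom^k(\tilde M)\), not just for subsets of \(\Latt{m}(\tilde M) \times \K^l(\tilde M)\). Following the outline of that proof, it suffices to associate to each geometric element \(c\) appearing as a coordinate of some element of \(C\) a canonical quantifier-free symmetric \(c\)-definable \(\qfLG\)-type \(q_c\) from which \(c\) is recoverable. For \(c \in \Latt[i]{m}\), a natural candidate is the type of \(MN\), where \(M\) is any representative of \(c\) in \(\GL{m}(\K)\) and \(N\) realizes the symmetric type of \ref{lift st dom} on \(\GL{m}(\Val)\); since we quotient by \(\GL{m}(\Val_i)\supseteq \GL{m}(\Val)\) when reading off \(c\), the choice of \(M\) is immaterial. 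A similar construction handles \(\Tor^i_m\). One then forms \(p_C := \push{s}{\bigotimes_{c \in C} q_c}\), where \(s\) is a canonical coding of finite unordered tuples; by symmetry of the \(q_c\), this depends only on \(C\). The canonical basis \(e_i\) of \(\restr{p_C}{\bcL_i}\) lies in \(\Geom_i(\aM{i})\) and is \(\bcL_i(C)\)-definable, hence belongs to \(\Geom_i(\dcla(M))\) since \(C \subseteq M\).

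Finally, apply \ref{descent} to each \(e_i\) to obtain \(\eta_i \in \Geom(M)\) interdefinable with \(e_i\) in the pair \((\aM{i}, M)\). The tuple \((\eta_1, \ldots, \eta_n) \in \Geom(M)\) then codes \(C\) in the \(\cL\)-structure on \(M\): an \(\cL\)-automorphism \(\sigma\) of \(M\) fixes all \(\eta_i\) if and only if every extension of \(\sigma\) to the pair \((\aM{i}, M)\) fixes \(e_i\), if and only if \(\sigma\) fixes each \(\restr{p_C}{\bcL_i}\), if and only if \(\sigma\) fixes \(C\). The main obstacle is the first step: canonically constructing the symmetric types \(q_c\) for geometric elements so that the assignment \(c \mapsto q_c\) is genuinely \(c\)-definable and not dependent on auxiliary choices. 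Once this is in place, the rest is a routine combination of \ref{code fin AC} with the descent machinery of Section\,\ref{s:acl}.
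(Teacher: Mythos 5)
Your overall strategy---code the finite set in the multi-valued algebraic closure via symmetric definable types, then descend the canonical basis to \(M\) via \ref{descent}---is exactly the paper's. However, the step you flag as "the main obstacle" is a genuine gap, not a routine detail, and your sketch of how to resolve it does not work.

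The problem is that your proposed type \(q_c\) for \(c\in\Latt[i]{m}\) is not well defined, and your argument that "the choice of \(M\) is immaterial" runs in the wrong direction. Fix \(N\models q\), where \(q\) is the type of \ref{lift st dom}. The type \(q\) is invariant under left multiplication by the \emph{small} group \(\GL{m}(\Val)\), but two representatives of \(c\) differ by an element of the \emph{larger} group \(\GL{m}(\Val_i)\). If \(M' = MU\) with \(U\in\GL{m}(\Val_i)\setminus\GL{m}(\Val_j)\) for some \(j\neq i\), then since \(N, N'\in\GL{m}(\Val_j)\), the \(\GL{m}(\Val_j)\)-coset of \(MN\) is that of \(M\), while the \(\GL{m}(\Val_j)\)-coset of \(M'N'\) is that of \(M'\); these differ. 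So the \(\bcL_j\)-data of the type depends on the representative, \(q_c\) is not a single quantifier-free \(\qfLG\)-type, and consequently \(p_C\) is not \(\code{C}\)-definable---which is exactly what the rest of your argument requires. The inclusion \(\GL{m}(\Val_i)\supseteq\GL{m}(\Val)\) that you invoke makes the ambiguity \emph{worse}, not better: a larger quotient group means more freedom in the choice of \(M\). The same problem occurs for \(\Tor^i_m\).

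The paper circumvents this by doing the opposite of enlarging the target of \ref{code fin AC}: it \emph{first} reduces \(C\) to a subset of \(\Latt{m}\times\K^l\) (where \(\Latt{m}=\GL{m}(\K)/\GL{m}(\Val)\) is the multi-valuation lattice sort), and only then applies \ref{code fin AC} as stated. The reduction uses two definable maps: products of \(\Geomim_i\)-sorts embed \(\cL_i\)-definably into some \(\Latt[i]{m}\), and there is an \(\cL\)-definable injection \(\Latt[i]{m}\to\Latt{m}\) sending a \(\GL{m}(\Val_i)\)-coset to its intersection with \(\bigcap_{j\neq i}\GL{m}(\Val_j)\). This second map is precisely what makes a canonical choice of \(\GL{m}(\Val)\)-coset from a \(\GL{m}(\Val_i)\)-coset, which is the canonicity you were missing. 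If you insert this reduction before invoking the symmetric-type machinery, your descent argument then goes through as in the paper.
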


\begin{proof}
Any product of sorts from \(\Geomim_i\) can be \(\cL_i\)-definably embedded in \(\Latt[i]{m}\) for large enough \(m\). Also, there is an \(\cL\)-definable injection \(\Latt[i]{m}(\K)\to\Latt{m}(\K)\) sending a coset of \(\GL{m}(\Val_i)\) to its intersection with \(\bigcap_{j\neq i}\GL{m}(\Val_j)\). So we may assume that \(C\) is a subset of \(\Latt{m}\times \K^l\) for some \(m\) and \(l\). By \ref{code fin AC}, \(C\) is coded by some \(\epsilon \in \alg{M}\). Note that since \(C\) is \(\bcL(M)\)-definable, \(\Geom_i(\epsilon)\subseteq\dcla(M)\). Thus, by \ref{descent}, we can find \(\eta_i\in M\) such that \(\Geom_i(\epsilon)\) and \(\eta_i\) are interdefinable in the pair \((\aM{i},M)\). It follows that \(\bigcup_i\eta_i\) is a code for \(C\) in \(M\).
\end{proof}

\subsection{The elimination of imaginaries}\label{s:main}

We can now prove our main result regarding bounded \(\PpC\) fields:

\begin{theorem}[EIPpC]
  The theory $T$ eliminates imaginaries.
\end{theorem}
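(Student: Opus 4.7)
The plan is to apply the abstract criterion \ref{crit wEI} to the quantifier free invariant independence relation $\qfindep$ of \ref{qf ind}, obtaining weak elimination of imaginaries, and then to upgrade to full elimination of imaginaries using the coding of finite sets in \ref{code fin}.

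For Hypothesis (i) of \ref{crit wEI}, let $E = \acleq(E) \subseteq \eq{M}$, $a \in M$ and $C \subseteq M$. Lifting every geometric coordinate of $a$ through one of the projections $s_m$, $t_m$, we may assume $a \in \K^{<\omega}(M)$. Since $E \supseteq \Geom(\acleq(E))$, \ref{loc dens higher} yields types $p_i \in \Inv(\aM{i}/\Geom_i(E))$ and $q_i \in \Inv(\pM{i}/\Geom_i(E))$ with $\tp(a/E) \cup \bigcup_i p_i \cup \bigcup_i q_i$ consistent. Taking a realization $a^\star$ in some $N \supsel M$, the quantifier free $\cL$-type of $a^\star$ over $M$ is the union of its quantifier free $\cL_i$-types, each determined by the restriction of $p_i$ or $q_i$ to $M$; because any $\sigma \in \aut(M/\Geom(E))$ extends (via the field automorphism part) to $\aM{i}$ and $\pM{i}$ fixing $\Geom_i(E)$ pointwise, this quantifier free type is $\aut(M/\Real(E))$-invariant. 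Realizing in $M$, by saturation, $\tp(a/E)$ together with the restriction of this invariant quantifier free type to $EC$ yields the required $a^\star$.

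For Hypothesis (ii) of \ref{crit wEI}, by \ref{indep suff} it suffices to check (ii'). Given $E = \acl(E) \subseteq M$ and tuples $a_1, a_2, c, c_1, c_2 \in M$ satisfying the hypotheses of (ii'), lift their geometric coordinates to field tuples $\hat a_i$, $\hat c$, $\hat c_i \in \K(M)$, the lifts of $c, c_1, c_2$ being chosen coherently so as to preserve the equivalences $\hat c \qfequiv[\cL(E \hat a_i)] \hat c_i$ and the conjugacy $\hat c_1 \equivL[E] \hat c_2$. The required algebraic disjointness $\alg{\K(E)(\hat a_1)} \cap \alg{\K(E)(\hat a_2)} = \alg{\K(E)}$ follows from $\hat a_1 \qfindep_E \hat a_2$ via \ref{alg disj}, using that $\alg{\K(E)} \cap M \subseteq E$ (by \ref{acl K}, since $E = \acl(E)$). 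Now \ref{indep} delivers $c^\star \in \K(M)$ with $\hat a_i \hat c_i \equiv_{\cL(E)} \hat a_i c^\star$; projecting to the original sorts gives the required conjugacy.

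With both hypotheses verified, \ref{crit wEI} gives weak elimination of imaginaries: for every $e \in \eq{M}$, we find a real tuple $A \subseteq \dcleq(e)$ with $e \in \acleq(A)$. Let $F$ be the finite $\aut(M/e)$-orbit of $A$, a finite subset of $\Geom^{<\omega}$; by \ref{code fin}, $F$ has a code $\code{F}$ in $\Geom$. The $\emptyset$-definable function recovering $e$ from $A$ is constant on $F$, so $e \in \dcleq(\code{F})$, while $\code{F} \in \dcleq(e)$ by construction. The main obstacle is the lifting procedure in Hypothesis (ii'): one must choose the field lifts coherently so that the quantifier free hypotheses and the algebraic disjointness required by \ref{indep} are preserved, and then verify that the field-tuple conclusion of \ref{indep} projects back to the conjugacy $a_i c_i \equiv_{\cL(E)} a_i \bar c^\star$ demanded by (ii').
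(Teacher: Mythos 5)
Your proposal follows the paper's proof: apply the abstract criterion (crit wEI) with quantifier-free invariant independence, verify Hypothesis (i) via loc dens higher and Hypothesis (ii') via indep, and upgrade weak to full elimination by coding finite sets via code fin. The paper's own argument is almost exactly this, only stated in three sentences.

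The one place where your write-up deviates from the cleanest path is the lifting step you yourself flag at the end. Rather than lift general geometric tuples $a_1, a_2, c, c_1, c_2$ to coherent field preimages --- which, as you note, requires making several choices compatible simultaneously --- observe that in the proof of crit wEI one is free to pick the tuple $a$ with $e = f(a)$ in a power of the dominant sort $\K$ (composing $f$ with the projections $s_m, t_m$). All tuples produced in the proof of crit wEI, and hence in indep suff, are then $\aut$-conjugates of $a$, so automatically field tuples; Hypotheses (i) and (ii') need only be verified for field tuples, and indep applies directly. This is the implicit reduction the paper relies on. With that adjustment the gap you flagged disappears, and the rest is sound, though two small points merit tightening: the inclusion $\alg{\K(E)}\cap M\subseteq E$ follows simply because field-theoretic algebraicity over $\K(E)\subseteq E$ implies model-theoretic algebraicity and $E = \acl(E)$ (the cited \ref{acl K} gives the other containment); and the algebraic disjointness hypothesis of indep does follow from $a_1\qfindep_E a_2$ via \ref{alg disj} applied to a realization $a_1^\star$ of the witnessing invariant type in some $N\supsel M$, using that $a_1^\star\qfequiv[\cL(Ea_2)] a_1$ transports the condition $\alg{\K(E)(a_1)}\cap\alg{\K(E)(a_2)} = \alg{\K(E)}$.
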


\begin{proof}
  We apply \ref{crit wEI} and \ref{indep suff} to obtain weak
  elimination of imaginaries. Hypothesis (i) is proved in \ref{loc
    dens higher} and Hypothesis (ii') is proved in \ref{indep}. Since,
  by \ref{code fin}, finite sets are coded, we obtain elimination of
  imaginaries.
\end{proof}

\begin{remark}
As noted in \ref{rem G}.\ref{Tn non nec}, for all \(m\), \(\Tor^{i}_{m}\) is \(\cL_i\)-definably embedded in \(\Latt[i]{m}\), so the \(\Tor^{i}_{m}\) are not necessary to eliminate imaginaries. Also the map sending a coset of \(\GL{m}(\Val)\) in \(\GL{m}(\K)\) to the tuple of \(\GL{m}(\Val_i)\) cosets is an \(\cL\)-definable bijection. Its inverse is given by taking the intersection of the \(\GL{m}(\Val_i)\) cosets. So \(T\) also eliminates imaginaries in the language with a sort for \(\K\) and, for all \(m\in\Zz_{>0}\), a sort \(\Latt{m}\) for \(\GL{m}(\K)/\GL{m}(\Val)\).
\end{remark}
\subsubsection{A pseudo real digression}\label{S:PRC}

Recall that a pseudo real closed field is a field which is existentially closed in any regular extension to which every order extends. Elimination of imaginaries for bounded pseudo real closed fields, in the language of rings with constants for an elementary subfield, was proved in \cite{Mon-EIPRC}. However, there seems to be an error in the final arguments. 

Recall the set up of \cite[Lemma\,4.9]{Mon-EIPRC}. Let M be a sufficiently saturated pseudo real closed field, in the language of rings with constants for a countable elementary substructure. Pick \(e \in\eq{M}\), and \(a\in M\) a tuple such that \(e \in\dcleq(a)\). Let \(E = \acl(E)\supseteq\acleq(e)\cap M\) and \(\overline{E} = \acleq(Ee)\cap M\). Then by the arguments of  \cite[Claim 1 of Proposition\,3.1]{Hru-PAC}, we find \(b_1, b_2\in M\) such that \(\tp(b_1/Ee) = \tp(b_2/Ee) = \tp(a / Ee)\) and \(\acl(\overline{E}b_1)\cap \acl(\overline{E}b_2) = \overline{E}\). 

However, unless we already know weak elimination of imaginaries in bounded pseudo real closed fields, we could have \(E \subset \overline{E}\). Since \(\acl(Eb_1)\cap\acl(E b_2) = \acl(\overline{E} b_1)\cap\acl(\overline{E}b_2) = \overline{E}\), \(b_1\) and \(b_2\) would not be algebraically independent over \(E\) which is crucial later in the proof.

The correct version of \cite[Lemma\,4.9]{Mon-EIPRC} is therefore:

\begin{lemma}[corr lem]
Let \(e\in\eq{M}\), \(f\) a \(\emptyset\)-definable map and \(a\in M\) a tuple such that \(f(a) = e\). Let \(E\subseteq M\) be such that \(\acleq(Ee)\cap M \subseteq E\). Then there exists tuples \(b_1, b_2 \in M\) which are algebraically independent over \(E\), such that \(\tp(b_1/Ee) = \tp(b_2/Ee) = \tp(a/Ee)\).
\end{lemma}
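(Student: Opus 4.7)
This is the corrected form of \cite[Lemma\,4.9]{Mon-EIPRC}. Its hypothesis $\acleq(Ee)\cap M \subseteq E$ is strictly stronger than the original $E = \acl(E) \supseteq \acleq(e)\cap M$: it forces $E = \overline{E}$ in the notation of the preceding discussion, and this is exactly what is needed to close the gap in Hrushovski's argument of \cite[Claim\,1 of Proposition\,3.1]{Hru-PAC}. My plan is therefore to mimic Hrushovski's construction, now applied over $E$ itself rather than over $\overline{E}$, and use the PRC analog of the machinery developed in Section~\ref{s:dens}.

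First, I would produce a global $\aut(M/Ee)$-invariant extension $p^{\ast}$ of $\tp(a/Ee)$. This is the bounded PRC analog of \ref{loc dens higher}: one carries out the development of Section~\ref{s:dens} in the PRC setting, with \ref{globdensPRC} (i.e., \cite[Proposition\,4.8]{Mon-EIPRC}) playing the role of the balls-based density \ref{loc dens}, and the abstract invariance-propagation lemma \ref{inv acl cor} --- applied with $T_0$ the theory of the real closure in each of the definable orderings --- propagating invariance through the algebraic closure as in the passage from \ref{loc dens Qp} to \ref{loc dens higher}. The hypothesis $\acleq(Ee)\cap M \subseteq E$ is precisely what makes $Ee$ an admissible algebraically closed imaginary base for this construction.

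Second, I would set $b_1 := a$ and realize $\restr{p^{\ast}}{Eb_1}$ by some $b_2$ in a sufficiently saturated elementary extension of $M$, then pull $b_2$ back into $M$ by saturation of $M$, since the partial type describing such a $b_2$ has size at most $|Ea|$. By construction, $\tp(b_2/Ee) = \restr{p^{\ast}}{Ee} = \tp(a/Ee) = \tp(b_1/Ee)$. Algebraic independence of $b_1$ and $b_2$ over $E$ then follows from $\aut(M/Ee)$-invariance of $p^{\ast}$ together with $E = \acl(E)$: if some element $c \in \acl(Eb_1)\cap \acl(Eb_2)$ lay outside $E$, it would have infinite $\aut(M/E)$-orbit, and a Neumann-style argument in the spirit of the claim inside the proof of \ref{code germ cdef}, applied to suitably many $\aut(M/Ee)$-conjugates of $b_1$, would contradict the invariance of $p^{\ast}$ realized by $b_2$.

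The main obstacle is the first step, since the PRC version of Section~\ref{s:dens} is not written out in the present excerpt. However, the only substantive new input needed --- density of invariant extensions of unary types over algebraically closed bases in bounded PRC fields --- is already available as \ref{globdensPRC}, and the rest of the development of Section~\ref{s:dens} transposes formally from the PpC to the PRC case. Once the invariant extension is in hand, the corrected hypothesis $\overline{E} = E$ does all the remaining work, which is exactly how the original argument was supposed to go.
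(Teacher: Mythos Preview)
Your approach is essentially correct, but it takes a heavier route than the paper does. The paper gives no explicit proof: the lemma is simply presented as the corrected formulation of \cite[Lemma\,4.9]{Mon-EIPRC}, and the preceding paragraph makes clear that the intended argument is that of \cite[Claim\,1 of Proposition\,3.1]{Hru-PAC} (as adapted in \cite{Mon-EIPRC}), now run under the stronger hypothesis \(\acleq(Ee)\cap M\subseteq E\). That hypothesis forces \(\overline E = E\), so the cited construction already yields \(\acl(Eb_1)\cap\acl(Eb_2) = E\), which is precisely algebraic independence over \(E\). No new machinery is invoked.

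By contrast, you propose to redevelop the PRC analogue of Section\,\ref{s:dens} to produce an invariant extension \(p^\ast\) and then realize it. This is sound, but two points deserve tightening. First, the analogue of \ref{loc dens higher} only yields a quantifier-free \(\aut(M/E)\)-invariant global type (via the types in each real closure), not a fully \(\aut(M/Ee)\)-invariant complete \(\cL\)-type as you state; fortunately, quantifier-free invariance is exactly what is needed for algebraic independence. Second, your Neumann-style Step~3 is more circuitous than necessary: once \(b_2\) realizes such a quantifier-free invariant type over \(M\), the PRC version of \ref{alg disj} shows directly that the algebraic locus of \(b_2\) over \(\K(M)\) is defined over \(\K(E)\), so \(b_2\) is algebraically independent from \(M\), and in particular from \(b_1\), over \(E\). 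Your route has the virtue of being self-contained within the paper's framework; the paper's route is shorter because it simply defers to the existing argument, now applied with a base that makes it go through.
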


But in \cite[Claim\,1 of Theorem\,4.11]{Mon-EIPRC}, given \(a' \subseteq a\) such that \(\trdeg{E(a)/E(a')} = 1\), if we replace \(E\) by \(\acl(E(a'))\), the stronger hypothesis of \ref{corr lem} might not hold anymore. This is easy to fix using the technology developed in this paper:

\begin{lemma}[loc dens PRC]
Let \(M\) be a bounded pseudo real closed field and let \(X\) be \(M\)-definable. There exists \(p\in\Def(\alg{M}/\acleq(\code{X})\cap M)\) consistent with \(X\).
\end{lemma}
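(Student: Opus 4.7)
My plan is to prove \ref{loc dens PRC} as the PRC analog of \ref{loc dens higher}, using \ref{globdensPRC} in place of the PpC density tools. A key simplification over the PpC setting is that $\alg{M}$ is a model of \ACF; since \ACF eliminates imaginaries and every complete \ACF-type is definable, $\Def(\alg{M}/B)$ coincides with the set of Zariski-generic types of $B$-definable varieties. I will induct on the arity $k$ of $X$, writing $A := \acleq(\code{X}) \cap M$ throughout.

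For the base case $k=1$: if $X$ is finite then $X \subseteq \dcleq(\code{X}) \subseteq A$ by a bounded PRC analog of \ref{code fin}, so a realized type works. Otherwise, \ref{globdensPRC} yields an $A$-definable decomposition $X \subseteq S_0 \cup \bigcup_{j<m} I_j$ with each $I_j = \bigcap_i I_j^i$ a nonempty multi-ordered open set and $X \cap I_j$ dense in $I_j$. Picking $j$ with $X \cap I_j$ infinite and passing to a sufficiently saturated $N \succeq M$, the multi-ordered approximation property \cite[Theorem\,4.1]{PreZie} (the PRC counterpart of \ref{AproThe}) delivers $c \in X \cap I_j(N)$ transcendental over $A$. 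Then $\tp_{\ACF}(c/\alg{M})$ is the unique transcendental $1$-type over $\alg{M}$, which is $\emptyset$-definable; this $p$ lies in $\Def(\alg{M}/A)$ and is consistent with $X$.

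For the inductive step, given $X \subseteq M^{k+1}$, I will project onto the first $k$ coordinates and apply induction to the projection $\pi(X)$ to obtain $p_0 \in \Def(\alg{M}/A_0)$ for $A_0 := \acleq(\code{\pi(X)}) \cap M \subseteq A$. Realizing $p_0$ by $c_0 \in \pi(X)(N)$ in a saturated extension and applying the $k=1$ case to the fiber $X_{c_0}$ over the base $A' := \acleq(Ac_0) \cap N$, I get a definable type $p_1$ consistent with $X_{c_0}$; the concatenated type is then consistent with $X$. Bringing it back to a type in $\Def(\alg{M}/A)$ will use a PRC analog of \ref{inv acl cor}, which should follow, as in \ref{inv acl NIP}, from the fact that $\mathrm{RCF}$ is \(\NIP\) via \cite[Lemma\,2.12]{HruPil-NIP}, together with a description of algebraic closure in bounded PRC fields in the style of \ref{descent} (with real closures replacing $p$-adic closures).

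The main obstacle will be the inductive step, specifically establishing the PRC analog of \ref{inv acl cor}: adjoining a realization of a definable type must preserve the definability of $\acleq$-data over $A$. By contrast, the base case is fairly direct, combining \ref{globdensPRC}, the approximation property, and the triviality of transcendental \ACF-types. Once \ref{loc dens PRC} is proved, it plugs into the repaired proof of weak elimination of imaginaries in bounded PRC fields sketched after \ref{corr lem}: given $E = \acl(E) \supseteq \acleq(e) \cap M$ and the definable set cut out over $Ee$, one applies \ref{loc dens PRC} to produce realizations whose algebraic closures behave well over $E$ itself, so that \ref{corr lem} genuinely applies.
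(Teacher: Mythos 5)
You identified the crucial simplification — that $\Def(\alg{M}/B)$ consists precisely of Zariski-generic types of varieties defined over $B$ — but then, instead of exploiting it, you launched into an induction on arity that imports machinery the statement does not require. The paper's proof is a one-liner: take $p$ to be the generic type (in $\ACF$) of an irreducible component $V$ of the Zariski closure of $X(M)$. Each such $V$ contains a Zariski-dense set of $M$-points (otherwise it would not be a component), so $V$ is defined over $M$; and since $\aut(M/\code{X})$ permutes the finitely many components, $\code{V}\in\acleq(\code{X})\cap M$. That $p$ is consistent with $X$ follows by compactness from the Zariski density of $X(M)\cap V$ in $V$. No use of \ref{globdensPRC}, no induction on arity, no coding of finite sets, no analog of \ref{inv acl cor}.

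Beyond being overcomplicated, your inductive step has a genuine gap which you yourself flag: the PRC analog of \ref{inv acl cor}. To produce it in the style of \ref{inv acl NIP}, one needs a description of the algebraic closure in bounded PRC fields in the spirit of \ref{acl Gi}--\ref{descr acl ACVF} and a descent lemma in the spirit of \ref{descent}, and these are not supplied; more importantly, even granting such a result, your concatenation step produces types $p_0$ and $p_1$ over different parameter sets in possibly different models, and welding these into a single type in $\Def(\alg{M}/A)$ needs the full transitivity/restriction argument of \ref{loc dens higher}, which is precisely what \ref{loc dens PRC} was intended to avoid. Your base case, while essentially correct, also does not need \ref{globdensPRC}: if $X$ is infinite, the unique transcendental $1$-type over $\alg{M}$ is $\emptyset$-definable and consistent with $X$, full stop. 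Finally, a small inaccuracy: you need $c$ transcendental over $M$ (not merely over $A$) for $\tp_{\ACF}(c/\alg{M})$ to be the transcendental type; passing to a saturated elementary extension supplies this because $X$ is infinite.
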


\begin{proof}
Take \(p\) to be the generic type of any irreducible component of the Zariski closure of \(X(M)\). By elimination of imaginaries in algebraically closed fields (and Galois theory), \(p\) is \(\acleq(\code{X})\cap M\)-definable.
\end{proof}

Now in the proof of \cite[Theorem\,4.8]{Mon-EIPRC}, given \(e\in \eq{M}\) in the range of some \(\emptyset\)-definable function \(f\), one can, using \ref{loc dens PRC}, first find \(p\in\Def(\alg{M}/E)\) consistent with \(f^{-1}(e)\) and then choose \(a\models p\). It then follows from \ref{gen stab acl} that for any \(a'\subseteq a\), \(E' := \acleq(e a')\cap M = \acl(Ea')\)  and hence that \(\acleq(E'e)\cap M \subseteq E'\). We can thus safely apply \ref{corr lem} to \(E'\).\medskip

Another, somewhat overkill, approach would be to adapt the general outline of the proof presented in this paper. The only result on bounded pseudo \(p\)-adically closed fields which is proved in this paper and whose pseudo real closed equivalent is not already proved in \cite{Mon-NTP2} is \ref{loc dens Qp}. But the bounded pseudo real closed equivalent is an easy consequence of \ref{globdensPRC}. The rest of the arguments can be copied \emph{mutatis mutandis} replacing \(\ACVFG\) by the theory of algebraically closed fields and \(\pCFG\) by the theory of real closed fields.


\sloppy
\bibliographystyle{alpha}
\bibliography{short,biblio}

\end{document}